\theoremstyle{plain}
\newtheorem{theorem}{Theorem}[section]
\newtheorem*{theorem*}{Theorem}
\newtheorem{lem}[theorem]{Lemma}
\newtheorem{prop}[theorem]{Proposition}
\newtheorem{cor}[theorem]{Corollary}
\theoremstyle{definition}
\newtheorem{ex}[theorem]{Example}
\newtheorem{defn}[theorem]{Definition}
\newtheorem{rk}[theorem]{Remark}
\title[\tiny On the modular representation theory of the partition algebra]{On the modular representation theory of the partition algebra}
\author{Armin Shalile}
\address{Institute for Algebra und Number Theory, University of Stuttgart,
Pfaffenwaldring 57, 70569 Stuttgart, Germany.}
\date{\today}
\email{shalile@mathematik.uni-stuttgart.de}
\newcommand{\C}{\mathbb{C}}
\newcommand{\Z}{\mathbb{Z}}
\newcommand{\R}{\mathbb{R}}
\newcommand{\N}{\mathbb{N}}
\newcommand{\inv}{^{-1}}
\newcommand{\prd}{P_r (\delta)}
\newcommand{\prdh}{P_{r+1/2} (\delta)}
\newcommand{\arc}{\ar@{-}@/_/}  
\newcommand{\tra}{\ar@{-}}      
\newcommand{\dta}{\ar@{.}}      
\newcommand{\ltra}{\ar@{<-}}      
\newcommand{\rtra}{\ar@{->}}       
\newcommand{\larc}{\ar@{<-}@/_/}  
\newcommand{\rarc}{\ar@{<-}@/_/}  
\newcommand{\Hom}{\text{Hom}}
\newcommand{\iind}{\operatorname{i-ind}}
\newcommand{\ires}{\operatorname{i-res}}
\newcommand{\ind}{\text{ind}}
\newcommand{\uqslp}{\mathcal{U}_q(\widehat{\mathfrak{sl}_p})}
\newcommand{\uqslinf}{\mathcal{U}_q(\mathfrak{sl}_\infty)}
\newcommand{\res}{\text{res}}
\newcommand{\dmu}{\Delta(\mu)}
\newcommand{\dmup}{\Delta(\mu')}
\newcommand{\dla}{\Delta(\lambda)}
\newcommand{\dlap}{\Delta(\lambda')}
\newcommand{\lmu}{L(\mu)}
\newcommand{\lla}{L(\lambda)}
\newcommand{\lmup}{L(\mu')}
\newcommand{\UD}{\textrm{Tab}}
\newcommand{\wt}{\operatorname{wt}}
\newcommand{\xc}{\underset{\textstyle\bigwedge}{\bigvee}}
\newcommand{\vc}{\underset{\textstyle\bigcirc}{\bigvee}}
\newcommand{\wc}{\underset{\textstyle\bigwedge}{\bigcirc}}
\newcommand{\oc}{\underset{\textstyle\bigcirc}{\bigcirc}}
\newcommand{\qc}{\underset{\textstyle \raisebox{-1.9mm}{$\bigcirc$}}{?}}
\begin{document}
\setlength\abovedisplayskip{6pt}
\setlength\belowdisplayskip{6pt}
\begin{abstract} 
We determine the decomposition numbers of the partition algebra when the characteristic of the ground field is zero or larger than the degree of the partition algebra. This will allow us to determine for which exact values of the parameter the partition algebras are semisimple over an arbitrary field. Furthermore, we show that the blocks of the partition algebra over an arbitrary field categorify weight spaces of an action of the quantum groups $\uqslp$ and $\uqslinf$ on an analogue of the Fock space. In particular, we recover the block structure which was recently determined by Bowman et al. In order to do so, we use induction and restriction functors as well as analogues of Jucys-Murphy elements. The description of decomposition numbers will be in terms of combinatorics of partitions but can also be given a Lie theoretic interpretation in terms of a Weyl group of type $A$: A simple module $L(\mu)$ is a composition factor of a cell module $\Delta(\lambda)$ if and only if $\lambda$ and $\mu$ differ by the action of a transposition.
\end{abstract}

\maketitle

\section*{Introduction}
The partition algebra was introduced by Paul Martin \cite{Martin94} for the study of the Potts model in statistical mechanics and independently by Jones \cite{Jones94}. It possesses a diagrammatically defined basis and multiplication and is therefore a diagram algebra, a class of algebras which for example also encompasses the group algebra of the symmetric group, the Brauer algebra and the Temperley-Lieb algebra to name but a few. A typical property for these diagram algebras is that they are related to subgroups of the general linear group via Schur-Weyl duality. This duality was first observed by Schur \cite{Schur} and classically relates the representation theory of the general linear group and the symmetric group. Both act on the $r$-fold tensor product $E^{\otimes r}$ of the natural representation $E$ of the general linear group: The general linear group by diagonal extension of its action on $E$ and the symmetric group (on $r$ letters) by permuting the tensor components. The resulting bimodule structure on $E^{\otimes r}$ can be used to define functors between the respective module categories, the Schur functor and inverse Schur functor.  The partition algebra is also in Schur-Weyl duality to the symmetric group. However, here we view the symmetric group as embedded in the general linear group as permutation matrices, so the symmetric group replaces the general linear group and the partition algebra replaces the symmetric group in the classical setting.

Many authors have studied partition algebras. Martin \cite{Martin96} determined much of its structure over $\C$ by determining criteria for semisimplicity and the structure of the generically irreducible representation when these are non-semisimple. Xi \cite{Xi99} showed that they are cellular in the sense of \cite{GrahamLehrer} and it was shown in \cite{4authors} that they are even cellularly stratified. Recently, Bowman et al. \cite{BDK} determined a criterion for when two simple modules are in the same block over an arbitrary field. 

Partition algebras have a basis of diagrams with $r$ columns where $r$ is an integer called the degree of the partition algebra and are denoted by $\prd$ in this paper where $\delta$ is some parameter. The partition algebra of degree $r$ can be embedded in a partition algebra of larger degree and the interdependency of this family may be analyzed by a structure called a tower of recollement \cite{towersofrecollment} or alternatively in the formalism of cellularly stratified algebras as introduced in \cite{4authors}. For the study of Jucys-Murphy elements and restriction rules it does, however,  turn out that it is easier to study the partition algebra by adding to this family certain subalgebras $P_{r-1/2}(\delta)$ of the usual partition algebra \cite{Martin00}. These are again diagrammatically defined, form a tower of recollement and are cellularly stratified.

The aim of this paper is to determine the multiplicities of the simple modules in the cell modules, the so called decomposition numbers, over a field of characteristic $p$ which does not divide the degree of the partition algebra, that is, $p=0$ or $p>r$. As a corollary, we recover the explicit condition when two simple modules are contained in the same block over a field of arbitrary characteristic from \cite{BDK}. Notice that  over the complex numbers, both of these results are known by the work of Martin \cite{Martin96}. The description of decomposition numbers will be in terms of combinatorics of partitions but we will also show how it can be interpreted in a Lie theoretic way by the action of an (affine) Weyl group of type A. For blocks, a Lie theoretic description was known before by the work of Bowman, De Visscher and King for partition algebra and Cox, Martin and De Visscher for Brauer algebras over $\C$, see \cite{BDK} \cite{CDM2}. For decomposition numbers of diagram algebras, however, this is a new phenomenon.

The approach of this paper treats the case of finite characteristic and characteristic $0$ in a uniform manner and is motivated by the Okounkov-Vershik approach to the representation theory of the symmetric group. In \cite{OkounkovVershik}, Okounkov and Vershik propose to study the complex representation theory of the symmetric group by using only restriction rules and the action of a family of distinguished elements, the  Jucys-Murphy elements. This has also proved an effective tool in the study of the blocks of the symmetric group and the construction of central idempotents \cite{Murphy}. A similar approach was used by the author to study the decomposition numbers of Brauer algebras in non-dividing characteristic \cite{JMpaper}. Jucys-Murphy elements have been defined by Ram and Halverson for the partition algebra in \cite{HalversonRam} where their  action on the cell modules was also studied. Enyang \cite{Enyang13} also defined seminormal forms for partition algebras.

The paper proceeds as follows. After recalling some well-known properties of partition algebras and Jucys-Murphy elements, we extend the restriction rules in the non-semisimple case determined by Doran and Wales \cite{DW2000} to the restriction between full and half integer partition algebras, see Theorem \ref{thm restriction of cells}. In the semisimple case this is already known by work of Martin \cite{Martin00}.

We then introduce the combinatorial framework needed in this paper. We introduce an arrow diagram calculus which is inspired by, but in many ways different to,  the arrow diagram calculus introduced by Brundan and Stroppel in the study of Khovanov diagram algebras \cite{BrundanStroppel} and used by Cox and De Visscher for the determination of the decomposition numbers of Brauer algebras over $\C$ \cite{CoxDeVisscher}. We will show that this arises naturally  out of the action of the Jucys-Murphy elements on the cell modules.

We proceed by proving that if a simple module $L(\mu)$ is a composition factor of a cell module $\dla$, then both must contain a simultaneous eigenvector for the action of the Jucys-Murphy elements and the eigenvalue by which a Jucys-Murphy element acts on these vectors is the same for both modules. We say that $\lambda$ and $\mu$ have a common JM weight in this case. We then prove that the converse is also true:
\setcounter{section}{4}
\setcounter{theorem}{0}
\begin{theorem} Consider the partition algebra $P_d(\delta)$ with $d \in \{ r,r+1/2 \}$ over a field of characteristic $p=0$ or $p>r$ and let $\mu$ be a partition of $r$. Then $\lambda$ and $\mu$ have a common JM weight if and only if $L(\mu)$ is a composition factor of $\dla$.
\end{theorem}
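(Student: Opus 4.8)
The plan is to establish the non-trivial implication, since the converse direction --- that $L(\mu)$ being a composition factor of $\Delta(\lambda)$ forces a common JM weight --- has already been obtained in the preceding section. I would first translate the condition that $\lambda$ and $\mu$ have a common JM weight into the language of the arrow diagram calculus: a shared simultaneous eigenvector for the Jucys--Murphy elements means that the content data attached to $\lambda$ and to $\mu$ agree at one position, and unwinding this through the explicit eigenvalues (contents of boxes shifted by $\delta$, reduced modulo $p$ when $p>r$) should show that having a common JM weight is symmetric in $\lambda,\mu$ and holds exactly when $\mu$ is obtained from $\lambda$ by the single reflection described in the introduction. This reformulation is what lets the hypothesis be propagated through the branching rules.

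With ``only if'' in hand it remains to prove: if $\lambda$ and $\mu$ share a JM weight then $L(\mu)$ occurs in $\Delta(\lambda)$. I would argue by induction on the degree $r$, using the tower $P_{r-1}(\delta)\subset P_{r-1/2}(\delta)\subset P_r(\delta)$ and the restriction functors, whose action on cell modules is governed by Theorem \ref{thm restriction of cells}. For the base case one takes $r$ small enough (or $\delta$ generic) that the algebra is semisimple; there $\Delta(\lambda)=L(\lambda)$ is simple and the common-weight condition collapses to $\lambda=\mu$, so the statement is immediate, and the complex semisimple case may be read off from Martin's results. For the inductive step, restriction is exact and sends $\Delta(\lambda)$ to a module filtered by cell modules $\Delta(\nu)$ with $\nu$ obtained from $\lambda$ by removing (or, in the half-integer step, retaining) a single box, the branching being multiplicity-free. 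The key combinatorial point is that, because the shared JM weight is recorded by a content sequence that restricts compatibly, one can choose a box so that the resulting $\lambda^{-}$ and $\mu^{-}$ still share a JM weight in the smaller algebra; the inductive hypothesis then yields that $L(\mu^{-})$ is a composition factor of $\Delta(\lambda^{-})$, and hence of $\operatorname{res}\Delta(\lambda)$.

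It remains to lift this downstairs information back to $P_r(\delta)$, and this is where I expect the main difficulty to lie. Using that induction is biadjoint to restriction in the tower of recollement, together with exactness, one transports the occurrence of $L(\mu^{-})$ in $\operatorname{res}\Delta(\lambda)$ into an occurrence of a simple module upstairs; the already-proved ``only if'' implication pins down that the only simple carrying the shared JM weight is $L(\mu)$, so no other candidate can absorb this multiplicity. The genuine obstacle is the \emph{nonvanishing}: one must rule out that $L(\mu)$ appears on restriction only through cancellation between two upstairs factors, that is, one must show the multiplicity of $L(\mu)$ in $\Delta(\lambda)$ is not zero. I would control this by computing the dimension of the relevant generalized Jucys--Murphy eigenspace on the shared weight and showing it is one-dimensional --- using the seminormal-type analysis of the eigenspaces from the earlier sections --- so that exactly one composition factor can contribute that weight and it therefore cannot cancel. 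Establishing this multiplicity-one statement for the JM action, and reconciling it with the filtration multiplicities coming from branching, is the technical heart of the argument.
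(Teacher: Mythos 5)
Your skeleton --- induction on the degree, the arrow-diagram reformulation of a common JM weight (Proposition \ref{prop common jm weight implies certain form}), passing to the second-last partitions $\lambda',\mu'$ of the two tableaux of common weight, applying the inductive hypothesis downstairs, and lifting back by adjointness between induction and restriction --- is the paper's strategy. Two refinements in the paper are worth noting: it works throughout with the refined functors $\ires$ and $\iind$ (projections onto generalized JM eigenspaces) rather than plain restriction, and it verifies via Proposition \ref{prop common weight over arbitr p means same no of arrows} that in the given configuration $\ires\Delta(\lambda)=\Delta(\lambda')$ and $\ires\Delta(\mu)=\Delta(\mu')$ are \emph{single} cell modules rather than filtered by several; this is what makes the lifting step manageable.

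The genuine gap is at what you yourself call the technical heart. First, the ``cancellation'' you worry about cannot occur: $\ires$ is exact, so composition multiplicities add in the Grothendieck group, and from $[\Delta(\lambda'):L(\mu')]\neq 0$ one gets for free that some composition factor $L(\tau)$ of $\Delta(\lambda)$ satisfies $[\ires L(\tau):L(\mu')]\neq 0$. The real difficulty, which your proposal does not resolve, is to identify $\tau$: by Propositions \ref{prop same cf means same wt} and \ref{prop common jm weight implies certain form} the candidates are $\tau=\mu$ and $\tau=\lambda$, and one must rule out that the occurrence of $L(\mu')$ downstairs comes entirely from the head $L(\lambda)$ of $\Delta(\lambda)$ itself. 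Your proposed fix --- showing the shared generalized JM eigenspace of $\Delta(\lambda)$ is one-dimensional --- cannot do this: that eigenspace is automatically nonzero (it contains the vector indexed by the $\lambda$-tableau $t$), and one-dimensionality only says that at most one composition factor carries the weight; to decide whether that factor is $L(\mu)$ or $L(\lambda)$ you would need to know which weights survive in the simple module $L(\lambda)$, which is equivalent to knowing the decomposition numbers you are trying to compute --- the argument is circular. The paper breaks this circle with two nontrivial inputs absent from your proposal: the computation of $\ires$ on simple modules (Theorem \ref{thm ires of simples}), in particular the vanishing $\ires L(\lambda^-)=0$ in cases $(c)$ and $(d)$, whose proof in turn requires $[\Delta(\lambda^-):L(\lambda^+)]\neq 0$; and the one-box decomposition number result (Theorem \ref{thm differ by one box then common CF}), proved by reduction modulo $p$ from characteristic $0$, which both feeds into Theorem \ref{thm ires of simples} and directly handles the residual configuration in which $\tau=\lambda$ cannot be excluded by the restriction argument. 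Without these, or a substitute for them, the lifting step does not close.
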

\setcounter{section}{0}
\setcounter{theorem}{0}
We will show that this actually does suffice to determine all decomposition numbers since all decomposition numbers are either $0$ or $1$ (Corollary \ref{cor dec multiplicities are 0 or 1}).

In order to prove the theorem, we first show that if $\lambda$ and $\mu$ have a common JM weight, then they are only different in one row, see Proposition \ref{prop common jm weight implies certain form}.

We then introduce refined induction and restriction functors in Section 3 as a preparation for an inductive proof of Theorem \ref{thm determination of dec nos}. These are analogues of the well-known $i$-restriction and $i$-induction functors for symmetric groups. We explicitly determine the $i$-restriction of simple modules except in one case where we can only describe the socle explicitly.

In Section 4, we then add the ingredients from the previous sections together to prove Theorem \ref{thm determination of dec nos}. Together with Proposition \ref{prop common jm weight implies certain form} this gives a diagrammatic condition to read off decomposition numbers straight from the partitions in question. It turns out that the diagrammatic condition can be described by the action of a Weyl group of type $A$ and the condition reduces to the partitions being connected by the action of a transposition, see Theorem \ref{thm lie theoretic det of dec nos}. Theorem \ref{thm determination of dec nos} will also allow us to state a precise condition for semisimplicity over arbitrary fields (Theorem \ref{thm semisimplicity}).

Section 5 prepares the categorification result in Section 6. We study the blocks of the partition algebra over a field of arbitrary characteristic which we completely determine in Theorem \ref{thm determination of blocks}. For this we use a result of Mathas \cite{Mathaspaper} which gives a necessary condition for two simple module to belong to the same block of an arbitrary cellular algebra with Jucys-Murphy elements. We show that this condition is also sufficient by transferring the knowledge of the blocks in characteristic $0$ to characteristic $p$ by reduction modulo $p$. We also recover the Lie theoretic description of blocks from \cite{BDK} as orbits of the action of an (affine if $p>0$) Weyl group of type $A$ in a natural way, see Theorem \ref{thm lie determination of blocks}.

In Section 6, we define an analogue of the Fock space for partition algebras. The definition is set-up in such a way that we obtain an action of the quantum group $\uqslp$ or $\uqslinf$ (depending on whether $p>0$ or $p=0$) by mimicking the action of the $i$-induction and $i$-restriction functors, see Theorem \ref{thm fock space is uqslp module}. The theorem is an analogue of a similar result originally due to Hayashi \cite{Hayashi} and Misra and Miwa \cite{MisraMiwa} for Hecke algebras of type $A$. There are two major differences to the Hecke algebra case though: Firstly, in order to get the right action, we need to adjust the induction/restriction functors slightly by composing with a functor due to Martin connecting half integer and full integer partition algebras. This will require a shift in the parameter $\delta$ so that our Fock space is a direct sum of the module categories of $P_i(\delta+i)$ for $i \in \N$. Secondly, the induction functor is not exact in our case, so that we cannot simply define the action by using the map induced by passing to Grothendieck groups. Nevertheless, the weight spaces correspond precisely to blocks, see Theorem \ref{thm decat of block is weight space}.

We would like to point out that at the time of writing this paper, it was brought to our attention that the determination of decomposition numbers in non-dividing characteristic were also obtained independently and to our knowledge by different methods by O. King \cite{King}. 

\section{Preliminaries}
\subsection{Definition of partition algebras} Throughout this paper, let $F$ be a field of characteristic $p \geq 0$, $r$ a natural number and let $d \in \{r,r+1/2 \}$. We also fix a parameter $\delta \in F$. The partition algebra $\prd$ of degree $r$ with parameter $\delta$ has a basis consisting of set partitions of the set $\{1,1',2,2',\ldots,r,r' \}$ which are visualized diagrammatically as partition diagrams. A partition diagram is a $2 \times r$ array of dots, which are labelled $1,\ldots,r$ in the top row and $1',\ldots, r'$ in the bottom row. We connect dots by edges in such a way, that the connected components are precisely the blocks of a partition and a minimal number of edges is used. Notice that this is not unique. For example, the diagram 
$\begin{array}{c}
 \begin{xy}
\xymatrix@!=0.01pc{ \bullet & \bullet \tra[drrrr] & \bullet  \arc[ll] \tra[r]  & \bullet \tra[dr] & \bullet \tra[r] & \bullet \\ \bullet \tra[r] & \bullet \tra[urrrr] & \bullet \arc[rr] & \bullet & \bullet & \bullet \\ }
\end{xy}\end{array}$
represents the set partition $\{  \{ 1,3,4,3',5' \}, \{2,6' \}, \{4'\}, \{5,6,1',2' \} \}$. The number of blocks of the partition which contain both an element of $\{1,2,\ldots,r\}$ as well as an element of $\{1',2',\ldots,r'\}$ is called the propagating number. The propagating number could also be defined as the minimum number of edges which connect top and bottom row ("propagating lines") in the diagram of the partition. In the example above, the propagating number is $3$. 

We can define a multiplication on partitions diagrammatically by a process called concatenation. To concatenate two partition diagrams $a$ and $b$, we write $a$ on top of $b$ and identify adjacent rows. The concatenation $a \circ b$ is the diagram obtained from this construction by deleting all connected components which are not connected to the top or bottom row and premultiplying the resulting diagram with as many powers of the parameter as there are connected components deleted. An example of this process is given in Figure \ref{figure example of concatenation}.

\begin{figure}
\caption{An example of concatenation}
\label{figure example of concatenation}
\vspace{3pt}
 $\begin{array}{c}\begin{xy}
\xymatrix@!=0.01pc{ 
\bullet \tra[d] & \bullet & \bullet \arc[ll] & \bullet \arc[rr] & \bullet  \arc[ll] & \bullet  \\
\bullet \dta[d]  \ar@{}[u]^{\text{ \large $a =$ \  }}  & \bullet \dta[d] & \bullet \dta[d]  & \bullet \arc[ll] \dta[d]  & \bullet \dta[d] & \bullet  \dta[d] \ar@{}[d]^{\text{ \large \ $\Longrightarrow \ a \circ b = $   }}  \\ 
\bullet \tra[rd] & \bullet  & \bullet \tra[d] & \bullet \arc[rr] &\bullet &  \bullet   \\
\bullet \arc[rr] \ar@{}[u]^{\text{ \large $b =$ \  }}   &\bullet & \bullet & \bullet &
\bullet & \bullet \tra[l]}
\end{xy}
 \begin{xy}
\xymatrix@!=0.01pc{ \ & \ &\ & \ \\
\bullet \tra[rd]  & \bullet & \bullet \arc[ll]  & \bullet \arc[rr] & \bullet \arc[ll] & \bullet      \\ 
\bullet   \arc[rr] \ar@{}[u]^{\text{ \large $\delta^2 \cdot $ \  }} & \bullet  & \bullet & \bullet &\bullet \tra[r] &  \bullet  \\
\ & \ &\ & \ }
\end{xy} \end{array}.  $

\end{figure}
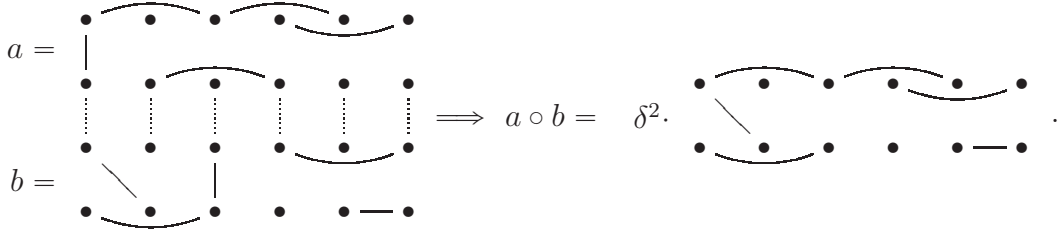

The partition algebra $\prd$ has a subalgebra $P_{r-1/2}(\delta)$ which is spanned by all diagrams where $r$ and $r'$ belong to the same block of the partition. The algebra $P_{r-1/2}(\delta)$ in turn contains the partition algebra $P_{r-1}(\delta)$ as a subalgebra where we identify $P_{r-1}(\delta)$ with the subalgebra spanned by all partitions in which $\{r,r'\}$ is a block. Thus we get a chain of subalgebras $$P_{1/2}(\delta) \subset P_{1}(\delta) \subset P_{3/2}(\delta) \subset P_{2}(\delta) \subset \ldots \subset \prd.$$

The half integer degree partition algebras are in fact Morita equivalent to full integer degree partition algebras:

\begin{theorem}[\cite{Martin00}]
\label{thm morita equiv of half with full}
There is an equivalence $T$ between between the module categories of $P_{r}(\delta)$ and $P_{r-1/2}(\delta-1)$.
\end{theorem}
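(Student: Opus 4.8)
The plan is to realise the equivalence through a full idempotent, in the standard way. Since $P_{r-1/2}(\delta-1)$ has smaller dimension than $P_r(\delta)$, I would look for an idempotent $e\in P_r(\delta)$ together with an algebra isomorphism $e\,P_r(\delta)\,e\cong P_{r-1/2}(\delta-1)$, and then set $T=e\cdot(-)$, i.e. $T=\Hom_{P_r(\delta)}(P_r(\delta)e,-)$. By the general theory of idempotents, $T$ is an equivalence onto $e\,P_r(\delta)\,e$-mod precisely when $e$ is \emph{full}, that is, when $P_r(\delta)\,e\,P_r(\delta)=P_r(\delta)$; the quasi-inverse is then $P_r(\delta)e\otimes_{e P_r(\delta) e}(-)$. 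Thus the whole argument reduces to three diagrammatic tasks: (i) write down $e$; (ii) identify the corner algebra $e\,P_r(\delta)\,e$ with the half-integer algebra at the shifted parameter; (iii) prove that $e$ is full.

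For (i) and (ii) the idempotent should localise the last site $\{r,r'\}$: concretely $e$ is built from the local configuration at $r$ that forces the last strand to propagate, and it must be chosen so that $e^2=e$ holds for \emph{every} value of $\delta$. This is the delicate point, since the naive ``singleton cap'' at $r$ only satisfies $e^2=\delta e$ and hence degenerates at $\delta=0$; one must instead use a genuinely parameter-free idempotent. Sandwiching a basis diagram $a$ as $eae$ then forces $r\sim r'$, so the surviving diagrams are in bijection with the diagram basis of $P_{r-1/2}$. Comparing the concatenation in the corner with concatenation of half-integer diagrams, one checks that the two products differ by exactly one closed loop, and tracking this loop is what produces the parameter shift $\delta\mapsto\delta-1$. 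Carrying the loop-count out carefully gives the algebra isomorphism $e\,P_r(\delta)\,e\cong P_{r-1/2}(\delta-1)$.

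The main obstacle is (iii), the fullness of $e$, proved uniformly in $\delta$. I would establish $P_r(\delta)\,e\,P_r(\delta)=P_r(\delta)$ by exhibiting, for each diagram $a\in P_r(\delta)$, an expression $a=\sum_i u_i\,e\,v_i$ with $u_i,v_i\in P_r(\delta)$; diagrammatically this amounts to cutting every diagram open along the last site and reconnecting it through $e$. The reason this step must be handled with care is again parameter-independence: the cut-and-reconnect identities that are routinely available in the generic (semisimple) range involve inverting $\delta$, and the real content of Martin's theorem is that, because of the shift built into $e$, a $\delta$-free version of these identities exists and persists at $\delta=0$ and in positive characteristic. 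Once fullness is in hand, the standard idempotent formalism delivers the equivalence $T$ together with its quasi-inverse, which completes the proof.
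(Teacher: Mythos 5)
The first thing to say is that the paper contains no proof of this statement at all: Theorem \ref{thm morita equiv of half with full} is quoted from \cite{Martin00}, so your proposal can only be measured against Martin's argument and against correctness. Measured that way, there is a fatal problem. The theorem as printed is a misstatement of Martin's result, and your plan takes it at face value: the algebras $P_r(\delta)$ and $P_{r-1/2}(\delta-1)$ are \emph{not} Morita equivalent. By Theorem \ref{thm classification of simples}, $P_r(\delta)$ has simple modules indexed by partitions of size at most $r$, while $P_{r-1/2}(\delta-1)$ has simples indexed by partitions of size at most $r-1$; concretely, for $r=1$ one has $P_1(\delta)\cong F\times F$ when $\delta\neq 0$ and $P_1(0)\cong F[x]/(x^2)$, neither of which is Morita equivalent to $P_{1/2}(\delta-1)=F$. (The printed statement also contradicts Theorem \ref{thm semisimplicity}: over $\C$, $P_r(0)$ is never semisimple but $P_{r-1/2}(-1)$ is.) The version the paper actually uses later — see $e_{i,r,\delta}=T\circ\ires_r$ in Section 6 — is $T\colon P_{r-1/2}(\delta)\text{-mod}\to P_{r-1}(\delta-1)\text{-mod}$, equivalently $P_r(\delta)\text{-mod}\simeq P_{r+1/2}(\delta+1)\text{-mod}$: the parameter shift sits on the \emph{integer} side. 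Since a full idempotent always induces a Morita equivalence between the ambient algebra and its corner, your step (iii) — fullness of some $e\in P_r(\delta)$ with $e\,P_r(\delta)\,e\cong P_{r-1/2}(\delta-1)$ — cannot be completed for any choice of $e$; that is exactly where any attempt at the printed statement must die, and it is not a repairable technicality of your argument.

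Even after transporting your skeleton to the correct orientation (idempotent $e$ inside the half-integer algebra $P_{r-1/2}(\delta)$, which is the larger algebra of the correct pair, with corner $\cong P_{r-1}(\delta-1)$ — this is indeed how the equivalence is established), two of your specific resolutions of the ``delicate points'' are wrong. First, the shift $\delta\mapsto\delta-1$ does not come from forcing the last strand to propagate plus a stray closed loop: sandwiching with a parameter-free propagation-forcing idempotent produces a corner with the \emph{same} parameter, e.g. $A_{r-1,r}\,P_r(\delta)\,A_{r-1,r}\cong P_{r-1}(\delta)$ with no shift. The shift arises only from an inclusion--exclusion idempotent of the form $e=\prod_{i}(1-b_i)$, where $b_i$ is the diagram joining column $i$ to the distinguished propagating column; its correction terms make every closed component in the corner multiplication evaluate to $\delta-1$ instead of $\delta$ (already visible for $P_{3/2}(\delta)$, where $e=1-b_1$ and $e\,A_1e$ squares to $(\delta-1)\,e A_1 e$). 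Second, fullness is \emph{not} uniform in the parameter: it fails precisely when the shifted parameter is $0$, i.e. for $P_{r-1/2}(1)$, because the one-dimensional representation on which every diagram acts by $1$ (which exists exactly when $\delta=1$) is annihilated by $e$; correspondingly $P_{r-1}(0)$ has one fewer simple (the label $\emptyset$ is excluded), so the equivalence genuinely fails there and Martin's theorem carries this exclusion. So your three-step plan is the right skeleton for the right statement, but as written it proves neither: the statement you set out to prove is false, and the two points you yourself flagged as delicate are resolved incorrectly.
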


We will write $P_d(\delta)$ with $d \in \{r,r+1/2 \}$ to cover both cases. We will also need the following distinguished elements of the partition algebra:
\begin{align*}
s_i & = \{ \{1,1' \},\{2,2' \},\ldots,\{i,(i+1)' \}, \{i+1,i' \},\{i+2,(i+2)' \},\ldots, \{r,r' \} \} \\
A_i& =\{ \{1,1' \},\{2,2' \},\ldots,\{i\}, \{ i' \}, \{i+1,(i+1)' \},\ldots, \{r,r' \} \}\\
A_{ij}& =\{ \{1,1' \},\{2,2' \},\ldots,\{i,i+1,i',(i+1)' \}, \{i+2,(i+2)' \},\ldots, \{r,r' \} \}
\end{align*}
 which may be visualized as follows:

$$
 s_i = \begin{array}{c}
 \begin{xy}
\xymatrix@!=0.01pc{ 
\bullet \tra[d]^{ \hspace*{0.2cm} \cdots} &  \bullet \tra[d]	 &	\overset{i}{\bullet}  \tra[rd] & \overset{i+1}{\bullet} \tra[ld] &	\bullet \tra[d]^{ \hspace*{0.2cm} \cdots} & \bullet \tra[d] \\
\bullet &  \bullet	 &\bullet & \bullet &	\bullet & \bullet
}
\end{xy}
\end{array},
 A_i = \begin{array}{c}
 \begin{xy}
\xymatrix@!=0.01pc{ 
\bullet \tra[d]^{ \hspace*{0.2cm} \cdots} &  \bullet\tra[d]	 &	\overset{i}{\bullet}  & \bullet \tra[d]^{ \hspace*{0.2cm} \cdots} &	\bullet \tra[d]  \\
\bullet &  \bullet	 &\bullet& \bullet &	\bullet 
}
\end{xy}
\end{array},$$ $$ A_{i,i+1} = \begin{array}{c}
 \begin{xy}
\xymatrix@!=0.01pc{ 
\bullet \tra[d]^{ \hspace*{0.2cm} \cdots} &  \bullet \tra[d]	 &	\overset{i}{\bullet}  \tra[r] & \overset{i+1}{\bullet} \tra[d]  &	\bullet \tra[d]^{ \hspace*{0.2cm} \cdots} & \bullet \tra[d] \\
\bullet &  \bullet	 &	\bullet \tra[r] & \bullet &	\bullet & \bullet
}
\end{xy}
\end{array}. $$

The partition algebra is generated as an algebra by the elements $s_i, A_i$ and $A_{i,i+1}$ for $i=1,\ldots,r-1$. The subalgebra generated by the elements $s_i$ ($i=1,\ldots,r-1$) is precisely the symmetric group algebra $FS_r$. 

\subsection{Simple modules and cellularity}
The  generic irreducible representations of the partition algebra have been studied by many authors and, accordingly, there are quite a few realizations around. We will use different realizations in different contexts in order to shorten proofs at the expense of elegance. Many  structural properties of partition algebras can be derived from the more general construction of cellularly stratified algebras in the sense of \cite{4authors}. We need the following definition:

\begin{defn} \label{defn of the stratification data}
\begin{enumerate}[(a)]
\item Let $J_{l,r}$ be the ideal of the partition algebra $P_r(\delta)$ which is spanned by all diagrams on $r$ dots with at most $l$ propagating lines. Similarly, let $J_{l+1/2,r+1/2}$ be the ideal spanned by all diagrams on $r+1$ dots with at most $l+1$ propagating lines, for which $r+1$ and $(r+1)'$ belong to the same block of the diagram. We usually omit $r$ if it can be recovered from the context in this and the following definitions.
\item Define $I_{l}=J_{l}/J_{l-1}$ and $I_{l+1/2}=J_{l+1/2}/J_{l-1/2}$ which are spanned by diagrams with exactly $l$ and $l+1$ propagating lines, respectively.
\item For $l=0,1,\ldots,r$, define $e_l \in P_{d}(\delta)$ to be the diagram 
$$ e_l = \begin{array}{c}
 \begin{xy}
\xymatrix@!=0.01pc{ 
	\bullet \tra[d] \tra[r] & \bullet \tra[r]  &	\cdots \tra[r] & \bullet & \overset{r-l+1}{\bullet} \tra[d]^{ \hspace*{0.2cm} \cdots} &  \bullet \tra[d]	 \\
	\bullet \tra[r] & \bullet \tra[r] &	\cdots \tra[r] & \bullet & \bullet &  \bullet	
}
\end{xy}
\end{array}, e_0 = \frac{1}{\delta} \begin{array}{c}
 \begin{xy}
\xymatrix@!=0.01pc{ 
\bullet \tra[r]  & \bullet \tra[r] &  \bullet \tra[r] &  \cdots \tra[r]  &  \bullet \tra[r] &  \bullet \tra[r]  &  \bullet  \\
\bullet \tra[r]  & \bullet \tra[r] &  \bullet \tra[r] &  \cdots \tra[r]  &  \bullet   \tra[r] &  \bullet \tra[r]  &  \bullet  \\
}
\end{xy}
\end{array}$$ where $e_0$ is only defined in the case $d=r$ and if $\delta \neq 0$. 
\end{enumerate}
\end{defn}

To show that the partition algebra is cellularly stratified, we need to introduce certain vector spaces $V_l$ and $V_{l+1/2}$ which will be in the spirit of Section 2 of \cite{DW2000}. First define a vector space $V'_l$ by the linear span of diagrams of the following form: The first $r-l$ dots in the bottom row are singletons, while each $i \in \{(r-l+1)',(r-l+2)',\ldots,r'\}$ is in a block $b_i$ with $\emptyset \neq b_i \setminus \{ i'\} \subseteq \{1,\ldots,r \}$, that is,  each is connected to the top row but no two of them to the same block. $V'_{l+1/2}$ is spanned by the same diagrams as $V_{l+1}$ except that we require $r+1$ and $(r+1)'$ to be connected.

The symmetric group $S_l$  acts on $V'_l$ and $V'_{l+1/2}$ from the right by permuting the positions $r-l+1,r-l+2,\ldots,r$. Notice that both spaces are also naturally left $P_r(\delta)$- and $P_{r+1/2}(\delta)$-modules, respectively. We define the vector spaces $V_l$ and $V_{l+1/2}$ to be spanned by a set of representatives of the orbits of the action of $S_l$ by taking all diagrams in $V'_l$ and $V'_{l+1/2}$, respectively, with the following property: If $i<j \leq l$ are natural numbers and $b_i$ and $b_j$ are the blocks connected to $i'$ and $j'$, respectively, then $b_i$ is further to the left than $b_j$, that is, $\min (b_i \setminus \{i'\}) < \min (b_j \setminus \{j'\})$. This ensures that we can draw the propagating lines without crossings.

This definition of $V_l$ is essentially the same as the one given by Xi in Section 4 of \cite{Xi99}.
 Thus, $P_r(\delta)$ is an iterated inflation of symmetric group algebras $FS_l$ along the vector spaces $V_l$ for $l=0,1,\ldots,r$ and exactly in the same way one shows that $P_{r+1/2}(\delta)$ is an iterated inflation of $FS_l$ along the vector spaces $V_{l+1/2}$. In particular, we obtain:

\begin{theorem}[See also Proposition 2.6 in \cite{4authors}] \begin{enumerate}[(a)]
\item Given an arbitrary field $F$ and $\delta \neq 0$, the partition algebra $P_d(\delta)$ is cellularly stratified with stratification data given by $(FS_r,V_d,FS_{r-1}, V_{d-1},\ldots,FS_0,V_{0}/V_{1/2})$ and idempotents $e_l$ for $l=0,\ldots,r$.
\item If $\delta=0$, then $P_d(0)/J_0$ is cellularly stratified with stratification data given by $(FS_r,V_d,FS_{r-1}, V_{d-1},\ldots,FS_1,V_{1}/V_{3/2})$ and idempotents $e_l$ for $l=1,\ldots,r$.
\end{enumerate}
\end{theorem}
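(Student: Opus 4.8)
Since the discussion immediately preceding the statement already establishes that $P_d(\delta)$ is an iterated inflation of the (cellular) group algebras $FS_l$ along the spaces $V_l$ (resp. $V_{l+1/2}$), the only thing left to supply is the extra datum that promotes an iterated inflation to a \emph{cellularly stratified} algebra in the sense of \cite{4authors}: for each layer $l$ one needs an idempotent $e_l$ whose image in the section $J_l/J_{l-1} \cong V_l \otimes FS_l \otimes V_l$ has the form $u_l \otimes 1_{FS_l} \otimes u_l$ for a suitable half-diagram $u_l$, with the inflation form satisfying $\phi_l(u_l,u_l) = 1_{FS_l}$. So my plan is to take the diagrams $e_l$ from Definition \ref{defn of the stratification data}(c) and verify these properties by direct concatenation, which reduces everything to bookkeeping of the connected components removed in the middle row.

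For $l \ge 1$ this is immediate. The diagram $e_l$ has its $l$ rightmost points as vertical through-strands while its remaining $r-l$ points are joined into a single propagating block, so when $e_l$ is concatenated with itself no connected component lies entirely in the middle row; hence $e_l \circ e_l = e_l$ with no power of $\delta$ and no normalisation, and $e_l$ is a genuine idempotent for every value of $\delta$. Writing $u_l$ for the top half-diagram of $e_l$, the through-strands match up by the trivial permutation and again create no middle loop, so $\phi_l(u_l,u_l) = 1_{FS_l}$ and the image of $e_l$ in $J_l/J_{l-1}$ is exactly $u_l \otimes 1_{FS_l} \otimes u_l$, as required. Concretely, capping an arbitrary diagram above and below by $e_l$ freezes the first $r-l$ points into a spectator block and leaves a partition-algebra action on the remaining $l$ points, so that $e_l P_d(\delta) e_l$ has $FS_l$ as its top cell layer.

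The one delicate point — and the only place the parameter intervenes — is the bottom idempotent $e_0$. Here every point is non-propagating, so concatenating the underlying diagram with itself leaves one closed component in the middle row and hence a factor of $\delta$; this is exactly what forces the normalisation $1/\delta$ and restricts $e_0$ to the case $\delta \neq 0$. When $\delta \neq 0$ this yields a genuine idempotent and, combined with the previous paragraph, verifies all the axioms for $l = 0,1,\dots,r$, proving (a). When $\delta = 0$ the propagating-number-zero layer admits no idempotent, so one simply removes it by passing to $P_d(0)/J_0$; the idempotents $e_1,\dots,e_r$ survive unchanged and the verification of the preceding paragraph applies verbatim, giving (b). The half-integer case $d = r+1/2$ is proved identically, replacing $V_l$ by $V_{l+1/2}$ throughout. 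I expect the only real effort to be this loop-counting in the concatenations; the structural input, namely the iterated inflation itself, is already in hand.
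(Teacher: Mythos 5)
Your overall plan --- take the iterated inflation as given and verify the extra axioms of \cite{4authors} by direct concatenation --- is the natural one (and more explicit than the paper, which proves nothing here beyond citing Xi and Proposition 2.6 of \cite{4authors}). But your key verification is internally inconsistent. You correctly note that for $l\geq 1$ the $r-l$ left-hand points of $e_l$ form a single \emph{propagating} block; this is exactly why $e_l\circ e_l=e_l$ holds with no factor of $\delta$. But then $e_l$ has $l+1$ propagating blocks, not $l$. Hence $e_l\in J_{l+1}\setminus J_l$, so it has no image in $J_l/J_{l-1}$ at all; its top half-diagram lies in $V_{l+1}$, not $V_l$; the inflation form pairs that half-diagram with itself to $1_{FS_{l+1}}$, not $1_{FS_l}$; and $e_lP_d(\delta)e_l\cong P_{l+1}(\delta)$, whose top cell layer is $FS_{l+1}$. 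The two properties you assert simultaneously --- idempotency on the nose and membership in layer $l$ with half-diagram in $V_l$ --- cannot both hold for the diagram you describe: the idempotent belonging to the layer $(FS_l,V_l)$ must have its merged block spanning $r-l+1$ columns, i.e.\ only $l-1$ through-strands. With your assignment the ideals generated are $Ae_0A=J_0\subset Ae_1A=J_2\subset Ae_2A=J_3\subset\cdots$, so $J_1$ is skipped and the subquotient $J_2/J_0$ is not an inflation of a single symmetric group algebra; the same defect persists in the quotient $P_d(0)/J_0$ in part (b). (The off-by-one is arguably inherited from Definition~\ref{defn of the stratification data} itself, which is likewise incompatible with the paper's later claim that $e_1P_d(\delta)e_1\cong P_{d-1}(\delta)$; but your proof asserts the false layer membership explicitly, so it must be repaired, either by attaching $e_{l-1}$ to the layer $(FS_l,V_l)$ or by enlarging the merged block of $e_l$ by one column.)

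Even granting that repair, your verification omits one of the defining axioms of a cellularly stratified algebra: besides the iterated inflation and the existence of idempotents of the form $u_l\otimes v_l\otimes 1_{B_l}$, the definition in \cite{4authors} requires the compatibility $e_le_m=e_me_l=e_l$ whenever $l\leq m$ (in the indexing by propagating number: the idempotent generating the smaller ideal absorbs the other). You never check this. It is again an elementary loop-count --- for two merged-block idempotents the middle row collapses to a single component that survives because it meets a vertical edge, and in products with $e_0$ the middle component survives because it meets the propagating parts of the other factor, so no stray powers of $\delta$ appear --- but it is a required condition, it is precisely what makes the $e_l$ generate a chain of ideals, and it is entirely absent from your argument.
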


Cellularly stratified algebras naturally come with functors which in our case relate representations of the symmetric group algebras $FS_l$ with representations of $P_d(\delta)$:

\begin{defn}[\cite{4authors}, Definition 3.3]
For $l=0,1,\ldots,r$, we set $ B_l=$ \\ $P_d(\delta)e_l \otimes_{e_lP_d(\delta)e_l}FS_l$ and define the layer induction functor $G_{l,d}$, or just $G_l$ if $d$ is clear from the context, as \begin{align*}
G_{l,d}: FS_l \text{-mod} & \to P_{d}(\delta)\text{-mod} \\ M & \mapsto B_l \otimes_{e_lP_d(\delta)e_l} M.
\end{align*}
\end{defn}

One can also construct cell modules from the stratification data. We first need to recall some definitions: A partition of a natural number $n$ is a tuple $ \lambda=(\lambda_1,\lambda_2, \ldots, \lambda_k)$ such that $\lambda_i \in \N$, $\lambda_i \geq \lambda_{i+1}$ for $i=1,\ldots,k-1$ and $\sum_{i=1}^k \lambda_i=n$. Each $\lambda_i$ is called a part of $\lambda$ and the natural number $k$ is called the number of parts of $\lambda$. We occasionally view the partition $\lambda$ as $(\lambda_1,\lambda_2, \ldots, \lambda_k,0,0,\ldots)$. We consider $\lambda = \emptyset$ the unique partition of $0$. We denote by $\Lambda^+(r)$ the set of partitions of non-negative integers $l \leq r$. The set $\Lambda^+(r+1/2)$ is equal to $\Lambda^+(r)$ but will be treated formally different: they will correspond to the labelling set of simple modules of $\prd$ and $\prdh$. We use $\Lambda^+(d)$ for either  $\Lambda^+(r+1/2)$  or $\Lambda^+(r)$. Given a prime $p$, the partition $\lambda$ is called $p$-singular if there is $j \leq k-p+1$ such that $\lambda_j=\lambda_{j+1}=\lambda_{j+p-1}$ and $p$-regular otherwise. For $p=0$ all partitions are considered $p$-regular.

We will usually identify a partition with its Young diagram. Thus, we will speak of addable and removable boxes of a partition: These are boxes of the Young diagram of $\lambda$ such that adding or removing this box will still yield a partition. If the box is $\epsilon$, then we denote the newly created partition by $\lambda+\epsilon$ and $\lambda-\epsilon$, respectively. We denote by  $\textrm{add } \lambda$ and $\textrm{rem }  \lambda$ the set of addable and removable boxes of $\lambda$.

In the following theorem, denote by $S(\lambda)$ the Specht module corresponding to the partition $\lambda$. The Specht modules $S(\lambda)$ for $\lambda$ a partition of $r$ form a complete set of simple modules for a semisimple symmetric group algebra $FS_r$. In finite characteristic $p$ the Specht modules corresponding to $p$-regular partitions have simple heads which form a complete set of simple modules of $FS_r$. For partition algebras, the analogues of the Specht modules are given as follows:

\begin{theorem}[Corollary 2.3 in \cite{DW2000}]
\label{thm classification of simples}
Let $\lambda$ be a partition of some natural number $l$ with $l \leq r$ and denote by $S(\lambda)$ the irreducible Specht module for the symmetric group $S_l$. Set $$\Delta_r(\lambda)=V_l \otimes_{S_l} S(\lambda) \textrm{ \ and \ } \Delta_{r+1/2}(\lambda)=V_{l+1/2} \otimes_{S_l} S(\lambda).$$
If $P_{d}(\delta)$ is semisimple, then the set $\{ \Delta_d(\lambda) \ | \ \lambda \in \Lambda^+(d)\} $  are precisely the cell modules and are, in particular,  a complete set of simple $P_d(\delta)$-modules. If $\delta=0$, we exclude $\lambda=\emptyset$.

If $P_d(\delta)$ is not semisimple, denote by $L_d(\lambda)$ the head of $\Delta_d(\lambda)$. If $\lambda$ is $p$-regular, then $L_d(\lambda)$ is simple and the set $\{ L_d(\lambda) \ | \ \lambda \in \Lambda^+(d), \textrm{ $\lambda$ $p$-regular}\} $ is a complete set of simple modules for $P_d(\delta)$. Again, if $\delta = 0$, we exclude $\lambda=\emptyset$.
\end{theorem}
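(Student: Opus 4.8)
The plan is to read the statement off from the general theory of cellular and cellularly stratified algebras, thereby reducing it to the representation theory of the symmetric groups $S_l$ recalled above. The preceding theorem exhibits $P_d(\delta)$ (respectively $P_d(0)/J_0$ when $\delta=0$) as an iterated inflation of the symmetric group algebras $FS_l$ along the vector spaces $V_d, V_{d-1}, \ldots$, with layer idempotents $e_l$. By the inflation construction of Graham and Lehrer \cite{GrahamLehrer}, the cell datum of each layer $FS_l$ lifts to a cell datum of the inflated algebra: from a cell module $C$ of $FS_l$ one forms $V_l \otimes_{S_l} C$, and the cell bilinear form on this space factors as the inflation form carried by $V_l$ composed with the cell form of $C$. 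Since the Murphy basis realises $FS_l$ as a cellular algebra whose cell modules are exactly the Specht modules $S(\lambda)$ for $\lambda \vdash l$, this produces precisely the modules $\Delta_d(\lambda) = V_l \otimes_{S_l} S(\lambda)$ as the cell modules of $P_d(\delta)$, indexed by $\Lambda^+(d)$, with $\lambda=\emptyset$ removed when $\delta=0$ because the bottom layer is then quotiented out in $P_d(0)/J_0$.

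Once cellularity is in place, both assertions follow from the standard dictionary for cellular algebras \cite{GrahamLehrer}: each cell module $\Delta(\lambda)$ carries a form $\Phi_\lambda$ with radical $\operatorname{rad}\Delta(\lambda)$, the quotient $L(\lambda)=\Delta(\lambda)/\operatorname{rad}\Delta(\lambda)$ is zero or simple, the set $\{L(\lambda) : \Phi_\lambda \neq 0\}$ is a complete irredundant list of simple modules, and the algebra is semisimple exactly when every $\Phi_\lambda$ is nondegenerate (equivalently every $\Delta(\lambda)$ is simple). For the semisimple case one therefore only needs that when $P_d(\delta)$ is semisimple all forms $\Phi_\lambda$ are nondegenerate; by the factorisation this forces each Specht form to be nondegenerate, hence each $FS_l$ is semisimple and every $S(\lambda)$ is irreducible, so the cell modules $\Delta_d(\lambda)$ coincide with the simple modules and are labelled by $\Lambda^+(d)$.

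For the non-semisimple case the point is to identify $\{\lambda : \Phi_\lambda \neq 0\}$. The factorisation shows $\Phi_\lambda \neq 0$ if and only if both the inflation form on $V_l$ and the Specht cell form on $S(\lambda)$ are nonzero. The key structural input from cellular stratification \cite{4authors} is that the inflation form on each $V_l$ is nondegenerate: this is exactly what the idempotent axiom on the $e_l$ guarantees, since $e_l$ makes $e_l P_d(\delta) e_l$ surject onto $FS_l$ and supplies an element of $V_l$ pairing to the identity of $FS_l$. Consequently $\Phi_\lambda \neq 0$ reduces to the Specht form being nonzero, which by the property of Specht modules recalled earlier in the paper holds exactly when $\lambda$ is $p$-regular. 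Hence $L_d(\lambda)$ is simple precisely for $p$-regular $\lambda$ and these exhaust the simple modules, again with $\emptyset$ excluded when $\delta=0$.

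The step I expect to be the main obstacle is the nondegeneracy of the inflation form on $V_l$, equivalently the verification that the chosen idempotents $e_l$ satisfy the cellular-stratification axiom. Concretely this means exhibiting, for each layer, partition diagrams whose concatenation produces $e_l$ together with a copy of the identity of $FS_l$ acting on the $l$ propagating lines; the nonvanishing of $\delta$ (or the passage to $P_d(0)/J_0$) is precisely what makes this work and explains the exclusion of the empty partition when $\delta=0$, as witnessed by the normalisation of $e_0$. Everything else is a formal consequence of the Graham--Lehrer machinery, and could equivalently be packaged through the layer induction functors $G_{l,d}$ of \cite{4authors}, once this compatibility of the inflation with the layer cell structure is checked.
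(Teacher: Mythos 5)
Your argument is correct, but it does not follow the source of this statement: the paper offers no proof at all, quoting the result as Corollary 2.3 of \cite{DW2000}, and Doran--Wales themselves work without any cellular formalism, deriving the classification by induction on the degree through idempotent localization (the correspondence between simple modules of $A$, of $eAe\cong P_{r-1}(\delta)$, and of the quotient $P_r(\delta)/J_{r-1}\cong FS_r$, in the spirit of Martin and Green). What you do instead is run the Graham--Lehrer dictionary \cite{GrahamLehrer} over the iterated inflation structure of \cite{Xi99} and \cite{4authors}, which is precisely the framework the paper records just before the statement; this buys a uniform treatment of both assertions at once (identification of the cell modules, simplicity of the heads at $p$-regular $\lambda$, completeness and irredundancy), and it correctly isolates the only nontrivial verification, namely the behaviour of the inflation form. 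Two points of yours deserve sharpening, though neither is a genuine gap. First, ``nondegeneracy'' of $\varphi_l\colon V_l\times V_l\to FS_l$ is the wrong condition and not what the idempotent axiom provides; what you need, and what you in fact state immediately afterwards, is only that $\varphi_l$ represents $1_{FS_l}$, witnessed for $l\geq 1$ by the half-diagrams of $e_l$, while for $l=0$ every value of $\varphi_0$ is a positive power of $\delta$ --- the true source of the exclusion of $\emptyset$ at $\delta=0$. Second, for $\delta=0$ your stratification statement concerns $P_d(0)/J_0$, so to classify the simple $P_d(0)$-modules themselves you should add that $J_0$ annihilates every simple module, which is immediate since $J_0^2\subseteq \delta J_0=0$ makes $J_0$ nilpotent.
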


One aim of this paper is to understand the composition factors of the cell modules $\Delta(\lambda)$. We denote by $d_{\lambda\mu}=[\Delta(\lambda):L(\mu)]$ the composition multiplicity of the simple module $L(\mu)$ in  the cell module $\Delta(\lambda)$. The decomposition matrix $D_{d}$ is the matrix with rows labelled by $\Lambda^+(d)$ and columns labelled by the $p$-regular partitions in $\Lambda^+(d)$   and entry $(\lambda,\mu)$ equal to $d_{\lambda \mu}$. The following theorem follows from \cite{4authors}:

\begin{theorem}[Proposition 6.1 and Corollary 6.2 in \cite{4authors}]
\label{thm form of dec matrix} 
There is an ordering of  cell and simple modules  such that $$  D_{ P_d(\delta) }=\left( \begin{array}{cc}
D_{FS_r} & 0  \\ 
 \ast & D_{P_{d-1}(\delta)}
\end{array}\right)$$ where $D_{FS_r}$ denotes the decomposition matrix of the symmetric group and where we leave out the row labelled by $\emptyset$ if $\delta=0$.
In particular,  
$D_{P_d(\delta)}$ is lower triangular with all diagonal entries equal to $1$ and contains the decomposition matrices of the symmetric group algebras $FS_l$ for all $l \leq r$.
\end{theorem}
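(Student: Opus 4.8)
The plan is to deduce everything from the cellularly stratified structure recorded above, combined with the standard theory of idempotent localisation for cellular algebras. Write $A = P_d(\delta)$ and recall that the cell modules $\Delta_d(\lambda)$ are indexed by $\lambda \in \Lambda^+(d)$ and are stratified into layers according to the number $l = |\lambda|$ of propagating lines, the layer structure being realised by the ideal filtration $J_0 \subset J_1 \subset \cdots \subset J_r$ (and its half-integer refinement). First I would fix the partial order on $\Lambda^+(d)$ in which $\mu$ is declared larger than $\lambda$ whenever $|\mu| > |\lambda|$, and, for $|\mu| = |\lambda|$, whenever $\mu$ strictly dominates $\lambda$ in the usual dominance order on partitions of $l$. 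Since $A$ is cellular with respect to this order, the Graham--Lehrer theory gives $[\Delta_d(\lambda):L_d(\mu)] \neq 0 \Rightarrow \mu \geq \lambda$ together with $[\Delta_d(\lambda):L_d(\lambda)] = 1$. Ordering rows and columns from the largest label to the smallest therefore makes $D_{P_d(\delta)}$ lower unitriangular; in particular, for $\lambda$ with $|\lambda| = r$ a composition factor $L_d(\mu)$ forces $|\mu| \geq r$, hence $|\mu| = r$, and this already produces the zero block in the upper right corner.

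The heart of the argument is the identification of the diagonal blocks. For each $l$ I would use the idempotent $e_l$ and the associated localisation functor $\Phi_l = e_l(\,\cdot\,)\colon A\text{-mod} \to e_l A e_l\text{-mod}$, which is exact. Writing $D(\mu)$ for the simple head of the Specht module $S(\mu)$ (for $\mu$ $p$-regular), the two facts to establish, both part of the cellularly stratified data of \cite{4authors}, are that $e_l A e_l \cong FS_l$ and that under this identification
\begin{align*}
\Phi_l(\Delta_d(\lambda)) &= \begin{cases} S(\lambda) & |\lambda| = l,\\ 0 & |\lambda| > l, \end{cases} &
\Phi_l(L_d(\mu)) &= \begin{cases} D(\mu) & |\mu| = l,\\ 0 & |\mu| > l. \end{cases}
\end{align*}
The vanishing for labels of size $> l$ is immediate, since $e_l \in J_l \subseteq J_{|\cdot|-1}$ then annihilates the corresponding $A/J_{|\cdot|-1}$-module; the values on the layer-$l$ modules are the cell and simple modules of the corner algebra $FS_l$. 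Granting this, apply the exact functor $\Phi_l$ to a composition series of $\Delta_d(\lambda)$ with $|\lambda| = l$. By triangularity every composition factor $L_d(\nu)$ satisfies $|\nu| \geq l$, so each factor with $|\nu| > l$ is killed while each $L_d(\mu)$ with $|\mu| = l$ contributes exactly one copy of $D(\mu)$. Hence, for partitions $\lambda, \mu$ of $l$,
\[
[\Delta_d(\lambda) : L_d(\mu)] \;=\; [\Phi_l(\Delta_d(\lambda)) : D(\mu)] \;=\; [S(\lambda) : D(\mu)],
\]
so the layer-$l$ diagonal block of $D_{P_d(\delta)}$ is exactly the symmetric group decomposition matrix $D_{FS_l}$. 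Taking $l = r$ gives the block $D_{FS_r}$ in the upper left corner, and letting $l$ range over $\{0,1,\ldots,r\}$ shows that $D_{P_d(\delta)}$ contains $D_{FS_l}$ for every $l \leq r$.

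To obtain the sharper recursive form, with the full complementary block equal to $D_{P_{d-1}(\delta)}$, I would invoke the tower-of-recollement structure: there is an idempotent $\xi$ with $\xi A \xi \cong P_{d-1}(\delta)$ such that the exact localisation functor $\xi(\,\cdot\,)$ sends $\Delta_d(\lambda) \mapsto \Delta_{d-1}(\lambda)$ and $L_d(\mu) \mapsto L_{d-1}(\mu)$ for all labels with at most $r-1$ boxes, without annihilating any of them. Exactness then yields $[\Delta_d(\lambda):L_d(\mu)] = [\Delta_{d-1}(\lambda):L_{d-1}(\mu)]$ for all such $\lambda, \mu$, which is precisely the bottom right block $D_{P_{d-1}(\delta)}$. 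When $\delta = 0$ one runs the same argument for the cellularly stratified algebra $P_d(0)/J_0$, so that the layer $l = 0$ and hence the row and column labelled by $\emptyset$ are simply removed.

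The main obstacle is the second step: verifying the compatibility between the inflation spaces $V_l$ and the idempotents $e_l$, that is, that $e_l A e_l \cong FS_l$ and that $\Phi_l$ really matches cell modules with Specht modules and simple modules with simple modules layer by layer. This is a diagram calculation with the explicit $e_l$ and the spaces $V_l$, and once it is in place both the unitriangularity and the block decomposition follow formally from exactness of the localisation functors; the remaining statements (the top-right zero block and the triangular shape) are then purely order-theoretic consequences of the Graham--Lehrer theory.
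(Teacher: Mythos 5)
Your statement is one the paper never actually proves: it is quoted verbatim from Proposition 6.1 and Corollary 6.2 of \cite{4authors}, so the relevant comparison is with the argument of that reference. Your reconstruction follows exactly the machinery that reference uses: unitriangularity from cellularity with respect to the order (size of the partition first, then dominance), identification of the diagonal blocks by applying exact idempotent-localisation functors attached to the stratification, and the recursive lower-right block via the recollement idempotent with $\xi P_d(\delta)\xi \cong P_{d-1}(\delta)$ (which is the paper's own globalization setup). Your first and third steps are essentially correct, with the caveat that at $\delta=0$ the relevant idempotents carry a factor $1/\delta$ and so do not exist; one must then work with $P_d(0)/J_0$ throughout, as you indicate, and the existence of a recollement idempotent $\xi$ for the recursive step needs separate justification in that case.

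The genuine problem is the central claim of your second step: $e_l A e_l \cong FS_l$ is \emph{false} for the partition algebra, and the ``diagram calculation'' you defer to would refute it rather than verify it. Most blatantly, for $l=r$ the idempotent $e_r$ is the identity diagram, so $e_r A e_r = A$; and for $l<r$ the corner algebra $e_l A e_l$ contains the nonzero subspace $e_l J_{l-1} e_l$ (sandwiching any diagram with fewer than $l$ propagating lines between two copies of $e_l$ gives, for $\delta \neq 0$, a nonzero multiple of such a diagram), and is in fact a partition-type algebra of lower degree, not a symmetric group algebra. What is true, and what \cite{4authors} actually prove and use, is the statement after passing to the quotient by the lower cell ideal: $e_l A e_l / e_l J_{l-1} e_l \cong FS_l$, equivalently $\bar{e}_l\,(A/J_{l-1})\,\bar{e}_l \cong FS_l$. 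Your argument survives this correction without further change: for $|\lambda|=|\mu|=l$ both $\Delta_d(\lambda)$ and $L_d(\mu)$ are annihilated by $J_{l-1}$, hence their images under $\Phi_l$ are annihilated by $e_l J_{l-1} e_l$ and are modules over the quotient $FS_l$, where they become $S(\lambda)$ and $D(\mu)$; exactness of $\Phi_l$ and vanishing on higher layers then give $[\Delta_d(\lambda):L_d(\mu)]=[S(\lambda):D(\mu)]$ exactly as you compute. So the architecture is the right one, but the key isomorphism as you state it is wrong, and the repair (take the corner of $A/J_{l-1}$, not of $A$) is precisely the content of the cellularly stratified axioms you are appealing to.
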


Over $\C$, the following theorem first appeared in \cite{Martin96} and was also proved in \cite{DW2000}.
\begin{theorem}
\label{thm differ by one box then common CF}
Suppose the characteristic $p$ of the field is arbitrary. Let $\mu$ be a partition of $r$ and $\lambda$ a partition of $r-1$ such that $\lambda=\mu-\epsilon$ for some removable box $\epsilon$ of $\mu$.   Then $[\Delta(\lambda):L(\mu)] \neq 0$ over $P_d(\delta)$ if and only if $c(\epsilon)\equiv \delta-r+1 \pmod{p}$.
\end{theorem}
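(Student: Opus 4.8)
The plan is to prove the two implications separately and to treat $d=r$ and $d=r+1/2$ in parallel, working integrally (over $\Z[\delta]$, or a valuation ring with residue field $F$) so that a single argument covers arbitrary characteristic $p$. In $P_d(\delta)$ the partition $\mu\vdash r$ labels a top-level cell module $\Delta(\mu)$, while $\lambda\vdash r-1$ labels a cell module one level below (recall $\Lambda^+(r+1/2)=\Lambda^+(r)$); the Jucys--Murphy elements and the generators $A_i,A_{i,i+1},s_i$ are available in both algebras, so the constructions below apply verbatim in each case. We may assume $\mu$ is $p$-regular, since otherwise $L(\mu)$ is not a simple module (Theorem~\ref{thm classification of simples}) and there is nothing to prove; in particular $\Delta(\mu)$ then has simple head $L(\mu)$, again by Theorem~\ref{thm classification of simples}.

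For necessity I would use the Jucys--Murphy elements of Halverson and Ram \cite{HalversonRam}. These are defined over $\Z[\delta]$, commute, and act triangularly on a natural basis of each cell module; consequently a symmetric, hence central, combination $z\in P_d(\delta)$ acts on $\Delta(\nu)$ (for $\nu$ a partition of $l\le r$) with a single generalized eigenvalue $\omega(\nu)\in\Z[\delta]$ of the form $\omega(\nu)=\sum_{b\in\nu}c(b)+g(l)$, where $g$ depends only on $l$, $\delta$ and $r$. Since $z$ is central and $z-\omega(\nu)$ is nilpotent on $\Delta(\nu)$, it acts by the scalar $\omega(\nu)$ on every composition factor of $\Delta(\nu)$ (our $L(\mu)$ is the inflation of the absolutely irreducible $FS_r$-module $D^\mu$, so a central element acts on it by a scalar, which for a nilpotent must be $0$). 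Hence, if $L(\mu)$ occurs in $\Delta(\lambda)$, comparing the action it inherits from $\Delta(\lambda)$ with the one it inherits from $\Delta(\mu)$ gives $\omega(\mu)=\omega(\lambda)$ in $F$. A direct computation yields $\omega(\mu)-\omega(\lambda)=c(\epsilon)-(\delta-r+1)$, which one checks against the base case $r=1$, $\mu=(1)$, $\lambda=\emptyset$, where $z=A_1$ and the difference is $-\delta$. Therefore $\omega(\mu)=\omega(\lambda)$ forces $c(\epsilon)\equiv\delta-r+1\pmod p$.

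For sufficiency, assume $c(\epsilon)\equiv\delta-r+1$ and construct a nonzero homomorphism $\theta\colon\Delta(\mu)\to\Delta(\lambda)$. Since $\Delta(\mu)$ is generated by (the image of) a highest vector of $S(\mu)$ at the top level, while $\Delta(\lambda)=V_{r-1}\otimes_{S_{r-1}}S(\lambda)$ sits one level lower, $\theta$ is specified by sending this generator to an explicit combination of basis vectors of $\Delta(\lambda)$, obtained by applying the generators $A_i$, $A_{i,i+1}$ and $s_i$ that lower the propagating number and contract the box $\epsilon$. Writing the map over $\Z[\delta]$, the requirement that $\theta$ be $P_d(\delta)$-linear reduces to a single relation whose obstruction is precisely $c(\epsilon)-(\delta-r+1)\in\Z[\delta]$; reducing modulo $p$, the hypothesis makes this obstruction vanish in $F$, so $\theta$ is a well-defined nonzero map. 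As $\Delta(\mu)$ has simple head $L(\mu)$, the nonzero image $\theta(\Delta(\mu))\subseteq\Delta(\lambda)$ is a nonzero quotient of $\Delta(\mu)$ and so has $L(\mu)$ as a composition factor, whence $[\Delta(\lambda):L(\mu)]\neq0$.

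The main obstacle is the sufficiency step: producing the explicit lowering homomorphism and verifying that its defining relation has obstruction exactly $c(\epsilon)-(\delta-r+1)$, with no auxiliary coefficients whose denominators could vanish in characteristic $p$. This is where the single-box, adjacent-level hypothesis is essential, and it is what keeps the argument integral and hence uniform in $p$; over $\C$ the analogous computation is carried out in \cite{Martin96} and \cite{DW2000}. An alternative to the hand-built map is to extract the same relation from the restriction filtration of Theorem~\ref{thm restriction of cells}, computing $\dim\Hom(\Delta(\mu),\Delta(\lambda))$ via the branching rule together with a symmetric-group calculation; either route reduces to the same content equation.
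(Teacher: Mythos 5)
Your necessity argument is correct for $d=r$ and is a genuinely different route from the paper's, which simply cites Theorem 5.1 of \cite{DW2000} and observes that its proof is characteristic-free. Using the central element $z=\sum_k L_k$ of Theorem \ref{thm sum of JMs central} and the eigenvalues in Theorem \ref{thm eigenvals of JM action}, one indeed gets $\omega(\mu)=\sum_{b\in\mu}c(b)+\binom{r}{2}$ and $\omega(\lambda)=\sum_{b\in\lambda}c(b)+\binom{r}{2}+(\delta-r+1)$, hence $\omega(\mu)-\omega(\lambda)=c(\epsilon)-(\delta-r+1)$; and the scalar-action claim is legitimate, since $z$ acts by the exact scalar $\omega(\nu)$ on an integral form of $\Delta(\nu)$ (it does so generically, and the integral form embeds in the generic cell module). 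However, your claim that the same formula drops out for $d=r+1/2$ ``in parallel'' is false: the additional element $L_{r+1/2}$ contributes $|\mu|=r$ to $\omega(\mu)$ but only $r-1$ to $\omega(\lambda)$, so in the half-integer case $\omega(\mu)-\omega(\lambda)=c(\epsilon)-(\delta-r)$. This shift is real, not a computational slip: for $P_{3/2}(\delta)$ over $\C$ the only possible off-diagonal decomposition number is $[\Delta(\emptyset):L((1))]$ (Theorem \ref{thm form of dec matrix}), and by Theorem \ref{thm semisimplicity} it is nonzero exactly at $\delta=1$, i.e.\ when $c(\epsilon)=0=\delta-r$, not at $\delta=0$ as the condition $\delta-r+1$ would predict. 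This is consistent with the paper's own practice: Theorem \ref{thm differ by one box then common CF} is only ever invoked in the integer case, and the half-integer analogue is derived separately, with different bookkeeping, inside the proof of Theorem \ref{thm ires of simples}. So a uniform treatment of both values of $d$ with the same right-hand side cannot succeed.

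The more serious problem is sufficiency, where your argument has a genuine gap. Everything rests on the assertion that a hand-built map $\theta\colon\Delta(\mu)\to\Delta(\lambda)$, written over $\Z[\delta]$, is $P_d(\delta)$-linear up to ``a single relation whose obstruction is precisely $c(\epsilon)-(\delta-r+1)$,'' with all other coefficients $p$-integral. That assertion is the entire content of this direction, it is never verified, and you flag it yourself as the main obstacle; the paper's remark following the theorem (on the Young's-orthogonal-form approach being quite technical and long) indicates that such an explicit verification is a substantial undertaking, not a routine check. The paper's actual proof avoids all computation: if $c(\epsilon)\equiv\delta-r+1\pmod p$, pick an integer lift of $\delta$ and $k\in\Z$ with $c(\epsilon)=(\delta+kp)-r+1$ exactly; the known characteristic-zero result (\cite{Martin96}, \cite{DW2000}) then gives $[\Delta(\lambda):L(\mu)]\neq 0$ for $P_r(\delta+kp)$ in characteristic $0$, and reduction modulo $p$ --- decomposition numbers of a cellular algebra defined over a $p$-modular system can only increase under reduction --- yields $[\Delta(\lambda):L(\mu)]\neq 0$ for $P_r(\delta)$ in characteristic $p$. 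You should replace your homomorphism construction by this specialization argument: it converts the mod-$p$ hypothesis into an exact equality in characteristic zero, where the hard work is already in the literature, and it is the only point at which the hypothesis is needed.
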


\begin{proof}
In characteristic $0$ this is well-known, see for example Theorems 5.1 and 7.1 in \cite{DW2000} . The proof of Theorem 5.1 is also valid over an arbitrary field which shows one direction. Conversely, if the content of $\epsilon$ is congruent to $\delta-r+1$ modulo $p$, then it is equal to $\delta-r+1+kp$ for some $k \in \Z$. Thus for the partition algebra $P_{r}(\delta+kp)$, we have  $[\Delta(\lambda):L(\mu)] \neq 0$. Reducing modulo $p$, we hence also get $[\Delta(\lambda):L(\mu)] \neq 0$ for the  partition algebra $P_r(\delta)$ over a field of characteristic $p$. 
\end{proof}

\begin{rk}
It is possible to prove this result using Young's orthogonal form as introduced in \cite{Enyang13} only. One can start in the generic case and show that for this special choice of $\lambda$ and $\mu$, only few of the coefficients in Young's orthogonal form have denominators divisible by $x-\delta$ and eliminate these by summing over basis vectors corresponding to tableau of the same weight. It is then possible to reduce modulo $x-\delta$ which yields the required result by considering the (explicitly given) action of the generators of the partition algebra. However, this approach is quite technical and long, so we have decided not to include it in this paper. But it is worth mentioning that, in principle, the approach of studying blocks and decomposition numbers by analysing only the action of JM elements is possible.
\end{rk}

\subsection{Induction, restriction and globalization}
Since we can embed $P_{d-1/2}$ in $P_{d}(\delta)$ (for $d \in \{r,r+1/2\}$), we can define the usual induction and restriction functors corresponding to this inclusion which will be denoted by \begin{align*}
\textrm{ind}_d: P_d(\delta) \text{-mod} \to P_{d+1/2}(\delta) \text{-mod}  \hspace*{0.8cm} \textrm{res}_{d+1/2}: P_{d+1/2}(\delta) \text{-mod} \to P_{d}(\delta)\text{-mod}.
\end{align*} Notice that there are various ways to embed a smaller in a larger partition algebra, which will however not affect the functors up to isomorphism.

There is also a second way to relate modules over different partition algebras. For this we use the idempotent $e=e_1$ from Definition \ref{defn of the stratification data}. Notice that $eP_{d}(\delta)e$ is isomorphic to $P_{d-1}(\delta)$ and this leads to the well-known (see e.g. Section 6 in \cite{Green}) globalization functor \begin{align*}
\textrm{G}_d: P_d(\delta)\text{-mod}  & \to P_{d+1}(\delta)\text{-mod}  \\ M & \mapsto P_{d+1}(\delta)e \otimes_{eP_{d+1}(\delta)e} M.
\end{align*}

Notice that because $e_1e_l=e_l$, the globalization and layer induction functor are related via  $G G_{l,d}(M)=G_{l,d+1} M$ for any $FS_l$-module $M$. In particular, we obtain:

\begin{prop}
Let $\lambda \in \Lambda^+(d)$. Then $G\Delta_d(\lambda)=\Delta_{d+1}(\lambda)$.
\end{prop}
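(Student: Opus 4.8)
The plan is to reduce the statement to the already-noted compatibility $G\,G_{l,d}(M)=G_{l,d+1}(M)$ between the globalization and the layer induction functors, by first realizing both cell modules as layer inductions of a single Specht module. Write $l=|\lambda|$, so that $\lambda$ is a partition of some $l\le r$ and $S(\lambda)$ is the corresponding Specht module for $FS_l$.

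The first, and main, step is to identify the cell module with the layer induction of the Specht module, that is, to show
$$\Delta_d(\lambda)\;\cong\;G_{l,d}\bigl(S(\lambda)\bigr)$$
as $P_d(\delta)$-modules. This is built into the cellularly stratified structure: unwinding the definitions of $G_{l,d}$ and of $B_l$ gives
$$G_{l,d}\bigl(S(\lambda)\bigr)=\bigl(P_d(\delta)e_l\otimes_{e_lP_d(\delta)e_l}FS_l\bigr)\otimes_{e_lP_d(\delta)e_l}S(\lambda),$$
and using that $P_d(\delta)e_l$ realizes the inflation datum $V_l$, with the right $e_lP_d(\delta)e_l$-action factoring through the canonical surjection $e_lP_d(\delta)e_l\twoheadrightarrow FS_l$, this collapses to $V_l\otimes_{S_l}S(\lambda)=\Delta_d(\lambda)$. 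I would either carry out this identification directly from the inflation description of $P_d(\delta)$, or simply cite it from \cite{4authors}, where the cell modules of a cellularly stratified algebra are by construction the layer inductions of the cell modules of the strata.

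Granting this, the proof is immediate. Applying $G$ and invoking the relation $G\,G_{l,d}(M)=G_{l,d+1}(M)$, valid for every $FS_l$-module $M$ because $e_1e_l=e_l$, with $M=S(\lambda)$, gives
$$G\,\Delta_d(\lambda)=G\,G_{l,d}\bigl(S(\lambda)\bigr)=G_{l,d+1}\bigl(S(\lambda)\bigr)=\Delta_{d+1}(\lambda),$$
the last equality being the same identification applied to $P_{d+1}(\delta)$; note that $\lambda\in\Lambda^+(d)\subseteq\Lambda^+(d+1)$, so that $\Delta_{d+1}(\lambda)$ is indeed defined.

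The only genuine obstacle is the first step, the identification $\Delta_d(\lambda)\cong G_{l,d}(S(\lambda))$: one must reconcile the two different-looking recipes for the cell module, namely the inflation recipe $V_l\otimes_{S_l}S(\lambda)$ and the idempotent recipe $B_l\otimes_{e_lP_d(\delta)e_l}S(\lambda)$. Once the bookkeeping of the various bimodule structures over $e_lP_d(\delta)e_l$ and $FS_l$ is checked --- essentially the content of the cellularly stratified formalism of \cite{4authors} --- everything else is formal.
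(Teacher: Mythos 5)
Your proposal is correct and is essentially the paper's own argument: the paper derives the proposition directly from the relation $G\,G_{l,d}(M)=G_{l,d+1}(M)$ (valid since $e_1e_l=e_l$), leaving implicit exactly the identification $\Delta_d(\lambda)\cong G_{l,d}(S(\lambda))$ that you spell out and which is indeed part of the cellularly stratified formalism of \cite{4authors}. Your write-up just makes that implicit step explicit, which is fine.
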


Induction and restriction of the cell modules of the partition algebra were first studied by Martin \cite{Martin96} \cite{Martin00} and then later in more detail by Doran and Wales \cite{DW2000}. However,   Doran and Wales only studied induction and restriction between the full partition algebras and omitted the half steps and Martin only studied the semisimple case.  The following theorem provides induction and restriction rules which are valid over an arbitrary field. In the theorem, we denote by $M\uparrow^{S_{l+1}}$ and $M \downarrow_{S_{l-1}}$ the induction and restriction of an $FS_l$-module $M$ to $FS_{l+1}$ and $FS_{l-1}$, respectively.
\begin{theorem}
\label{thm restriction of cells}
Suppose the characteristic of the field is arbitrary, $l \leq n$ is a natural number and let $M$ be an arbitrary $S_l$-module . Then the following sequences are exact:\\
$(a)$ \ $\displaystyle 0 \longrightarrow V_{l-1/2} \otimes M\downarrow_{S_{l-1}} \longrightarrow \res_r ( V_l \otimes M) \longrightarrow V_{l+1/2} \otimes M \longrightarrow 0. $\\
$(b)$ \ $\displaystyle 0 \longrightarrow V_{l} \otimes M\longrightarrow \res_{r+1/2} (  V_{l+1/2} \otimes M) \longrightarrow V_{l+1} \otimes M \uparrow^{S_{r+1}} \longrightarrow 0. $ \\
$(c)$ \ $\displaystyle 0 \longrightarrow V_{l-1/2} \otimes M\downarrow_{S_{l-1}} \longrightarrow \ind_r ( V_l \otimes M) \longrightarrow V_{l+1/2} \otimes M \longrightarrow 0. $\\
$(d)$ \ $\displaystyle 0 \longrightarrow V_{l} \otimes M\longrightarrow \ind_{r-1/2} (  V_{l+1/2} \otimes M) \longrightarrow V_{l+1} \otimes M \uparrow^{S_{r+1}} \longrightarrow 0. $\\
In particular, the following sequences are exact, provided that  the characteristic $p$ of the field is $0$ or larger than $r$:\\
$(e)$ \ $\displaystyle 0 \longrightarrow  \bigoplus_{\epsilon \in \textrm{rem}(\lambda)} \Delta(\lambda-\epsilon)  \longrightarrow \res_r  \Delta(\lambda) \longrightarrow \Delta(\lambda) \longrightarrow 0. $\\
$(f)$ \ $\displaystyle 0 \longrightarrow\Delta(\lambda)\longrightarrow \res_{r+1/2} \Delta(\lambda) \longrightarrow \bigoplus_{\epsilon \in \textrm{add}(\lambda)} \Delta(\lambda+\epsilon) \longrightarrow 0. $ \\
$(g)$ \ $\displaystyle 0 \longrightarrow  \bigoplus_{\epsilon \in \textrm{rem}(\lambda)} \Delta(\lambda-\epsilon)\longrightarrow \ind_r \Delta(\lambda) \longrightarrow \Delta(\lambda) \longrightarrow 0. $\\
$(h)$ \ $ \displaystyle 0 \longrightarrow\Delta(\lambda)\longrightarrow \ind_{r-1/2} \Delta(\lambda) \longrightarrow \bigoplus_{\epsilon \in \textrm{add}(\lambda)} \Delta(\lambda+\epsilon) \longrightarrow 0. $\\
\end{theorem}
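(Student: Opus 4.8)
The plan is to prove the bimodule sequences $(a)$--$(d)$ first, by a direct diagrammatic analysis carried out one half-step at a time, and then to obtain the cell-module sequences $(e)$--$(h)$ by specializing to a Specht module and invoking semisimplicity of the symmetric group algebras involved. The point of separating these two stages is that $(a)$--$(d)$ hold over an arbitrary field, whereas the passage to $(e)$--$(h)$ is exactly where the hypothesis $p=0$ or $p>r$ must be used.

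For $(a)$--$(d)$ I would argue on the spanning half-diagrams of $V_l$ and $V_{l\pm 1/2}$, in the spirit of Section~2 of \cite{DW2000} but refined to the half-integer steps. Take restriction as the model case. Restricting $V_l\otimes M$ from $P_r(\delta)$ to the subalgebra $P_{r-1/2}(\delta)$ (spanned by the diagrams with $r\sim r'$) leaves the underlying space unchanged, so the task is to produce a $P_{r-1/2}(\delta)$-stable filtration. I would split the spanning half-diagrams according to the combinatorial status of the distinguished node, namely whether the top dot $r$ lies in the block that propagates to $r'$. The half-diagrams for which this holds carry a propagating line pinned to the pair $(r,r')$; they span a $P_{r-1/2}(\delta)$-submodule, which an orbit count identifies with $V_{l-1/2}\otimes M\downarrow_{S_{l-1}}$, the residual $S_{l-1}$ permuting the unpinned lines. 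The complementary half-diagrams descend to the quotient, which a dimension count matches with $V_{l+1/2}\otimes M$. The identifying isomorphisms are the evident operations on diagrams tensored with the identity of $M$. Sequences $(c)$ and $(d)$ are treated identically for induction, and $(b)$ is the mirror image of $(a)$; in each case one first checks that the prescribed maps are genuine $P_d(\delta)$-homomorphisms and then compares dimensions to get exactness.

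To pass from $(a)$--$(d)$ to $(e)$--$(h)$ I would set $M=S(\lambda)$ and apply the functor $(-)\otimes_{S_l}S(\lambda)$, viewing each sequence as a sequence of right $S_l$-modules. Here the hypothesis $p=0$ or $p>r$ enters decisively: since $l\le r$, the algebra $FS_l$ is semisimple, so every $FS_l$-module is projective, the tensor functor is exact, and the short exact sequences survive. Semisimplicity also splits the branching rules, giving $S(\lambda)\downarrow_{S_{l-1}}=\bigoplus_{\epsilon\in\mathrm{rem}(\lambda)}S(\lambda-\epsilon)$ and $S(\lambda)\uparrow^{S_{l+1}}=\bigoplus_{\epsilon\in\mathrm{add}(\lambda)}S(\lambda+\epsilon)$. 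Substituting these and using $V_j\otimes_{S_j}S(\mu)=\Delta(\mu)$ from Theorem~\ref{thm classification of simples} (with $V_{l-1/2}\otimes_{S_{l-1}}S(\mu)=\Delta(\mu)$ for $\mu\vdash l-1$ and $V_{l+1/2}\otimes_{S_l}S(\lambda)=\Delta(\lambda)$) turns the outer terms into the asserted direct sums of cell modules and the inner terms into $\Delta(\lambda)$, which is precisely $(e)$--$(h)$.

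I expect the genuine difficulty to lie entirely in $(a)$--$(d)$, and within those in verifying equivariance for the non-invertible generators $A_i$ and $A_{i,i+1}$ rather than merely for the symmetric group generators $s_i$. The delicate bookkeeping is tracking the effect of concatenation on the pinned propagating line of the half-integer spaces $V_{l\pm 1/2}$: one must show that a propagating line is cut or created exactly where claimed, and with the correct power of $\delta$ absorbed by the deleted components. Once the maps are shown to be $P_d(\delta)$-homomorphisms, exactness reduces to a comparison of diagram counts, and the reduction $(e)$--$(h)$ is then essentially formal.
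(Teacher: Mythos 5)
Your treatment of the restriction sequences $(a)$, $(b)$ and of the passage to $(e)$--$(h)$ is sound and is essentially the paper's own argument: the paper constructs the surjection onto $V_{l+1/2}\otimes M$ (resp.\ $V_{l+1}\otimes M\uparrow^{S_{l+1}}$) by an explicit diagram operation, observes that the pinned diagrams lie in the kernel, and finishes with exactly the dimension count you invoke; $(e)$--$(h)$ are then Specht branching plus semisimplicity of the symmetric group algebras, as you say. Two small caveats on $(a)$, $(b)$: the sequence $(b)$ is not the literal mirror of $(a)$, since freeing the pinned line creates an $(l+1)$-st propagating line and the quotient is the induced module $V_{l+1}\otimes M\uparrow^{S_{l+1}}$ rather than $V_{l+1}\otimes M$; and "spanned by the pinned diagrams" must be read modulo the relations of $\otimes_{S_l}$, because a diagram whose top dot $r$ lies in some \emph{other} propagating block $b_i$ also dies in the quotient, but is identified with a pinned diagram only after right-multiplying by the transposition $(i,r)$.

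The genuine gap is the claim that $(c)$ and $(d)$ are "treated identically". Your method for restriction rests on the fact that restriction leaves the underlying vector space unchanged, so that it suffices to produce a filtration of a known space and compare dimensions. Induction destroys both pillars: $\ind_r(V_l\otimes M)=P_{r+1/2}(\delta)\otimes_{P_r(\delta)}(V_l\otimes M)$ is a tensor product over a non-semisimple algebra, so there is no given diagram space to filter, and its dimension is precisely what has to be determined --- $P_{r+1/2}(\delta)$ is not free (nor in any obvious way well-behaved) as a right $P_r(\delta)$-module, so no analogue of a coset count is available and "comparing dimensions to get exactness" is circular. This is where the paper inserts a separate idea that your outline is missing: inducing equals globalizing and then restricting, $\ind_d M\cong \res_{d+1}(GM)$ (Proposition 7 in \cite{Martin96}). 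The paper proves this by the bimodule isomorphism $P_{r+1}(\delta)e_1\cong P_{r+1}(\delta)A_1$ together with an explicit diagram map identifying $P_{r+1}(\delta)A_1\otimes_{P_r(\delta)}M$ with $P_{r+1/2}(\delta)\otimes_{P_r(\delta)}M$ (and the analogous statement one half-step down). Since globalization sends $V_{l,r}\otimes M$ to $V_{l,r+1}\otimes M$, the induction sequences $(c)$ and $(d)$ then follow by applying the already-proved restriction sequences one degree up. Without this reduction, or some substitute for it such as an explicit basis theorem for the induced module, your argument for $(c)$ and $(d)$ does not get off the ground.
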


\begin{proof}
This was essentially already proved in detail in Theorem 11.1 of \cite{DW2000} except that they did not take the half-steps into account. We will therefore omit some details in this proof. For the proofs of the first two exact sequences, we use restriction functors based on the embedding of $P_r(\delta)$ in $P_{r+1/2}(\delta)$ by adding a vertical edge on the right of each diagram. $P_{r+1/2}(\delta)$ is embedded in $P_{r+1}(\delta)$ in a natural way by definition. 

 We start with the second exact sequence.  Define a map $\theta:V_{l+1/2} \otimes_{S_l} M \to V_{l+1} \otimes_{S_{l+1}} S_{l+1} \otimes_{S_l} M$ by $$d \otimes m \mapsto A_{r+1}\cdot d \cdot (r-l,r+1) \otimes 1 \otimes m$$ for any $m \in M$ and any diagram $d \in V_{l+1/2}$ and extending linearly. This map can also be described graphically: $ A_{r+1}\cdot d \cdot (r-l,r+1)$ can be constructed from $d$ by connecting the block of $d$ which contains $r+1$ to $(r-l)'$ instead of $(r+1)'$ and then replacing column $r+1$ of $d$ by two singleton dots deleting all edges it is connected to. Because $A_{r+1}$ commutes with every element of $P_r(\delta)$ it is easy to see that $\theta$ is a homomorphism of left $P_r(\delta)$-modules. 

Now the codomain of the map is precisely $V_{l+1} \otimes M \uparrow^{S_{l+1}}$ so that we will first show that the map is surjective. Let $d \otimes (i,r+1) \otimes m$ be an element of  $V_{l+1} \otimes_{S_{l+1}} S_{l+1} \otimes_{S_l} M$ for some diagram $d \in V_{l+1}$, some $m \in M$ and $i \in \{1,\ldots,r+1\}$. Notice that such elements span $V_{l+1} \otimes_{S_{l+1}} S_{l+1} \otimes_{S_l} M$ because transpositions of the form $(i,l+1)$ are a complete set of representatives for the cosets of $S_l$ in $S_{l+1}$. But by the graphical description of $\theta$ mentioned above, we can thus construct a diagram $d'$ such that  $A_{r+1}\cdot d' \cdot (r-l,r+1)$ is equal to $d\cdot (i,r+1)$: Start with $d\cdot (i,r+1)$ and add a column on the right of the diagram, multiply from the right by $(r-l,r+1)$ (i.e. connect the block which was connected to $(r-l)'$ with $(r+1)'$) and connect $r+1$ to $(r+1)'$. Thus $\theta(d' \otimes m)=d \otimes (i,r+1) \otimes m$ and we have proved surjectivity. 

Notice that for any diagram $d$ which has a block of the form $(r+1)(r+1)'$ (i.e. $(r+1)$ and $(r+1)'$ are connected to each other but to nothing else) will satisfy $\theta(d \otimes m)=0$, since this means that $(r-l)$ will be a singleton block of $A_{r+1}\cdot d \cdot (r-l,r+1)$ and hence has less than $l+1$ propagating lines. Thus, if we embed $V_{l,r}$ in $V_{l+1,r+1}$ by adding a vertical edge at the end, then we see that $V_{l,r} \otimes M \subseteq \ker \theta$. 

The dimension of $V_{l+1} \otimes S_{l+1} \otimes_{S_l} S_l$ is $(l+1) \displaystyle \sum_{k=l+1}^r  S(r,k) \left(\begin{array}{c}k \\ l+1
\end{array} \right)$ 
where $S(r,k)$ is the number of (set) partitions of $r$ into at least $k$ parts: There are $S(r,k)$ ways to choose the partition and then we have to choose $l$ out of the $k$ parts which are connected to the bottom row. On the other hand the dimension of 
$V_{l+1/2,r+1/2}$ is equal to the sum of the dimension of $V_{l,r}$ (all diagrams which have a vertical edge at the end) and $\sum_{k=l}^r S(r,k)\left(\begin{array}{c}k \\ l
\end{array} \right)(k-l)$: The number of partitions of $r$ into $k$ parts of which $l$ are connected to the bottom row times the $k-l$ choices we can connect $r+1$ to. An easy calculation shows, that the dimension of the kernel of $\theta$ must be equal to the dimension of $V_{l,r} \otimes M$ and the result follows.

Now consider the first exact sequence. This time we want to construct a map $$\theta: V_{l} \otimes_{S_l} M \to V_{l+1/2}  \otimes_{S_l} M.$$ In order to do this we will first construct a map $f:  V_{l} \cdot (r,r-l) \to V_{l+1/2} $ by defining $f(d)$ to be the same diagram as the diagram $d \in  V_{l} \cdot (r,r-l)$ except that we join the blocks containing $r$ and $r'$ and extending this linearly to the whole of $ V_{l} \cdot (r,r-l)$. Notice that every element in  $V_{l} $ has a singleton dot at position $(r-l)'$ and therefore $d$ has a singleton dot at position $r'$. We then define $$\theta(d \otimes m)=f(d\cdot (r,r-l)) \otimes m$$ for $d \in V_{l}$ and $m \in M$.

To show that $\theta$ is a $P_{r-1/2}(\delta)$-module homomorphism, it suffices to show that $f$ is. But if $f(d)=d'$ for some diagram $d \in  V_{l} \cdot (r,r-l)$ and $d' \in V_{l+1/2}$, then multiplication by $A_{r}$ exactly inverts the operation of $f$ since in the diagram $d$ the dots $r$ and $r'$ are not connected. So $d=d'A_{r}$ and $ad=ad'A_r$ for any diagram $a \in P_{r-1/2}(\delta)$. But $ad' \in V_{l+1/2}$ so that $f(ad)=ad'=af(d)$ which shows that $f$ and therefore $\theta$ is a $P_{r-1/2}(\delta)$-module homomorphism.

Notice that $\theta$ again has a graphical description: The second tensor component stays unchanged while in the first we first swap $r'$ and $(r-l)'$ in the bottom row and then connect $r$ and $r'$. This can simply be undone by removing the connection between $r$ and $r'$ and then swapping $r'$ and $(r-l)'$ so that surjectivity follows similarly as in the first case.

We have to study the kernel of this map. If $r$ and $r'$ are connected in $d$, then $f(d\cdot(r,r-l))$ will have the same number of propagating lines as $d$ and thus be zero in $V_{l+1/2}$. Thus, $V_{l-1/2} \otimes_{S_l} M$ is contained in $\ker \theta$ and because $r$ and $r'$ are always connected in $V_{l-1/2}$, $V_{l-1/2} \otimes_{S_l} M$ is isomorphic to $V_{l-1/2} \otimes_{S_{l-1}} M \downarrow_{S_{l-1}}$ as $P_{r-1/2}(\delta)$-module. The statement now follows by a dimension argument, because \begin{align*}
 \dim V_{l,r}=\sum_{k=l}^r S(r,k) \left(\begin{array}{c}
k \\ 
l
\end{array}\right), \dim V_{l-1/2,r-1/2} & =\sum_{k=l}^r S(r,k) \left(\begin{array}{c}
k-1 \\ 
l-1
\end{array}\right), \\ \displaystyle \dim V_{l+1/2,r-1/2} & =\sum_{k=l+1}^r S(r,k) \left(\begin{array}{c}
k-1 \\ 
l
\end{array}\right).
\end{align*}

The exact sequences $(c)$ and $(d)$ can be proved by noticing that inducing is the same as globalizing and then restricting, that is, $\ind_d M = \res_{d+1} G M$ for any $P_d(\delta)$-module $M$, see Proposition 7 in \cite{Martin96} for the integer degree case.
Thus, we have to show that the vector spaces $ P_{r+1}(\delta)e_1 \otimes_{P_{r}(\delta)} M$ (i.e. we first apply globalization and then restrict) and $P_{r+1/2}(\delta) 
 \otimes_{P_{r}(\delta)} M$ (ordinary induction)  are isomorphic as $P_{r+1/2}(\delta)$-modules. Furthermore, we have to show that $ P_{r+1/2}(\delta)e_1 \otimes_{P_{r-1/2}(\delta)} M$ and $P_{r}(\delta) 
 \otimes_{P_{r-1/2}(\delta)} M$ are isomorphic as $P_r(\delta)$-modules.

We will see that we have already essentially solved this problem by rewriting it. First notice that $P_{r+1}(\delta)e_1 \cong P_{r+1}(\delta)A_1$  as $P_{r+1/2}(\delta)$-$P_r(\delta)$-bimodules where the right action of $P_{r}(\delta)$ on 
$P_{r+1}(\delta)A_1$ is induced by embedding $P_{r}(\delta)$ in $P_{r+1}(\delta)$ by adding a vertical edge \textit{on the left} of all diagrams. The isomorphism is given by right multiplication by $A_1$ which obviously commutes with the action of $P_{r+1/2}(\delta)$ on the left and also with $P_{r}(\delta)$ on the right by the choice of embedding of $P_r(\delta)$ in $P_{r+1}(\delta)$. The inverse is given by connecting $1'$ and $2'$ in each diagram which works since $1'$ and $2'$ are connected in each diagram of $P_{r+1}(\delta)e_1$. Using exactly the same reasoning shows that $P_{r+1/2}(\delta)e_1$ and $P_{r+1/2}(\delta)A_1$ are isomorphic as $P_{r}(\delta)$-$P_{r-1/2}(\delta)$-bimodules.

Thus, we have to show $ P_{r+1}(\delta)A_1 \otimes_{P_{r}(\delta)} M \cong P_{r+1/2}(\delta) 
 \otimes_{P_{r}(\delta)} M$. But now the map is induced by the map $f$ given above  composed with multiplication by the permutation $(1,r+1)$ from the right, the proof being almost identical as before. Similarly, $ P_{r+1/2}(\delta)A_1 \otimes_{P_{r-1/2}(\delta)} M \cong P_{r}(\delta) 
 \otimes_{P_{r-1/2}(\delta)} M$ where the isomorphism is induced by the map $d \in P_{r+1/2}(\delta)A_1 \mapsto A_{r+1} \cdot d \cdot (1,r+1)$ which has a similar graphical description as the map for the first exact sequence. The proof that it is an isomorphism is almost identical.

The exact sequences $(e)$-$(h)$ are a consequence of the branching rules for symmetric groups, see e.g. Theorem 9.2 in \cite{James}.
\end{proof}

\subsection{Jucys-Murphy elements and canonical basis}
As shown in Theorem 3.37 of \cite{HalversonRam}, the simple modules for the partition algebra have a canonical basis which is analogous to Gelfand-Zetlin bases, provided we are in the semisimple case.
 The construction relies  on the multiplicity freeness of the restriction from $P_d(\delta)$ to $P_{d-1/2}(\delta)$, which was established in Theorem \ref{thm restriction of cells}. Thus, we can restrict a given simple module which will yield a unique direct sum decomposition into simple modules and by iterating the procedure for the summands, we eventually end up with unique direct sum decomposition into $1$-dimensional subspaces. We can label the subspaces as follows:
To each $\lambda \in \Lambda^+(d)$, we can associate a so called Bratelli diagram. The vertices at level $i \in 1/2 \N$ are given by the direct summands of the restriction of $\Delta(\lambda)$ to $P_{d-i}(\delta)$ and there is an edge connecting a vertex $v$ in row $i$ with a vertex $v'$ in row $i+1/2$ if the module corresponding to $v'$ is a direct summand of the restriction of the module corresponding to $v$ to $P_{r-i-1/2}(\delta)$. The paths from top to bottom are called tableaux and are precisely the labels of the $1$-dimensional subspaces above:

\begin{defn}
 A  tableau $t$ of a partition $\lambda$  is a sequence of partitions \newline $(t^{1/2}=\emptyset,t^1,t^{3/2},\ldots,t^{d}=\lambda)$ such that for all $k=1/2,1,3/2,\ldots,d$ the partitions $t^{k}$ and $t^{k-1/2}$ are either equal or differ by one box, where the box has to be added to $t^{k-1/2}$ if $k \in \N$ and removed from $t^{k-1/2}$ if $k \notin \N$. We will sometimes refer to the transition from $t^{k-1/2}$ to $t^{k}$ by adding or removing a box as the $k$th step of $t$. 
 The set of tableaux of a given partition $\lambda \in \Lambda^+(d)$  is denoted by $\textrm{Tab}(d,\lambda)$  or just $\rm{Tab}(\lambda)$ if  $d$ is clear from the context.
\end{defn}

\begin{ex}
\label{ex updown tab} Examples of tableaux for $d=4+1/2$ are $$t=\left( {\Yvcentermath1 \begin{array}{c}
 \emptyset,\yng(1),\yng(1), \yng(2),\yng(2), \yng(2,1), \yng(2,1), \yng(3,1),\yng(3,1)
 \end{array}} \right),$$ $$u=\left( {\Yvcentermath1 \begin{array}{c}
\emptyset,\yng(1),\yng(1), \yng(2),\yng(2), \yng(2,1), \yng(2,1), \yng(2,1),\yng(1,1)
 \end{array}} \right).$$ 
 \end{ex}

Tableaux play a similar role for partition algebras as standard tableaux for the symmetric group. In particular, if we pick a non-zero element from each of the $1$-dimensional subspaces above, then we obtain a basis of the cell modules which is labelled by tableaux in a natural way.

In the symmetric group case, canonical basis vectors have the important property that they are simultaneous eigenvectors for the action of Jucys-Murphy (JM) elements. Analogues of these JM elements were also defined for partition algebras by Halverson and Ram \cite{HalversonRam} where they are just called Murphy elements. The exact definition of the Jucys-Murphy elements will not be important in this paper and we refer the reader to Section $3$ of \cite{HalversonRam}. We will only need the following theorem:

\begin{theorem}[Theorem 3.37 in \cite{HalversonRam}]
\label{thm eigenvals of JM action}
Let $d \in \{r,r+1/2 \}$ and suppose that  $L_{1/2},L_1,L_{3/2},\ldots L_d \in P_d(\delta)$ are the Jucys-Murphy elements as defined in \cite{HalversonRam}. Then:
\begin{enumerate}[(i)]
\item The elements $L_{1/2},L_1,L_{3/2},\ldots L_d$ commute with each other.
\item Assume $P_d(\delta)$ is semisimple and let $\{ v_t\}_{t \in \UD(\lambda)}$ be the canonical basis of $\Delta(\lambda)$. Then each Jucys-Murphy element $L_k$ acts by a scalar $L_k(t)$ on $v_t$ for $t \in \UD(\lambda)$ with:
$$L_k(t)= 
\begin{cases}
c(\epsilon) & \textrm{if } t^{k}=t^{k-1/2}+\epsilon, k \in \N \\
\delta - |t^k| & \textrm{if } t^{k}= t^{k-1/2}, k \in \N  \\
\delta-c(\epsilon) & \textrm{if } t^{k}=t^{k-1/2}-\epsilon, k \notin \N \\
|t^k| & \textrm{if } t^{k}= t^{k-1/2}                                                                                                                                    
, k \notin \N  \end{cases}.$$
\end{enumerate}
\end{theorem}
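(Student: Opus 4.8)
The plan is to proceed in the spirit of Okounkov--Vershik, using as the sole structural input the multiplicity-free restriction of Theorem~\ref{thm restriction of cells}. The key fact I would isolate first is that the partial sums $Z_k = \sum_{1/2 \le j \le k} L_j$ are \emph{central} in the corresponding term $P_k(\delta)$ of the tower $P_{1/2}(\delta) \subset P_1(\delta) \subset \cdots \subset P_d(\delta)$; indeed, the Jucys--Murphy elements of \cite{HalversonRam} are built precisely so that each $Z_k$ is the partition-algebra analogue of the ``sum of all contents'' central element $\sum_{i<j}(i,j) \in FS_k$, and centrality can be verified directly by checking that $Z_k$ commutes with the generators $s_i, A_i, A_{i,i+1}$ of $P_k(\delta)$. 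Granting this, part~(i) is formal: $Z_{k-1/2}$ is central in $P_{k-1/2}(\delta)$, hence commutes with $L_{1/2}, \ldots, L_{k-1/2}$, while $Z_k$ is central in $P_k(\delta) \supseteq P_{k-1/2}(\delta)$ and so also commutes with all of $L_{1/2}, \ldots, L_{k-1/2}$; therefore $L_k = Z_k - Z_{k-1/2}$ commutes with every earlier generator, and induction on $k$ yields pairwise commutativity.

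For part~(ii), assume $P_d(\delta)$ semisimple, so that by Theorem~\ref{thm classification of simples} each cell module $\Delta_k(\nu)$ is irreducible. Schur's lemma then forces the central element $Z_k$ to act on $\Delta_k(\nu)$ by a single scalar $z_k(\nu)$. Because the restriction $P_k(\delta) \downarrow P_{k-1/2}(\delta)$ is multiplicity-free, the canonical vector $v_t$ lies, at every level, in a $P_k(\delta)$-submodule isomorphic to $\Delta_k(t^k)$; consequently $Z_k v_t = z_k(t^k) v_t$, and subtracting the analogous relation at level $k-1/2$ gives the telescoping identity $L_k v_t = \big(z_k(t^k) - z_{k-1/2}(t^{k-1/2})\big) v_t$. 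This already shows the $L_k$ act diagonally on the canonical basis and reduces the whole problem to computing the one central character $z_k(\nu)$ and forming its differences along a tableau.

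To evaluate $z_k(\nu)$ I would apply $Z_k$ to a single convenient generator of $\Delta_k(\nu) = V_l \otimes_{S_l} S(\nu)$ (with $l = |\nu|$), say $d_0 \otimes w$ where $d_0$ is the diagram whose $l$ propagating lines run straight down and $w \in S(\nu)$. On such a vector the symmetric-group part of $Z_k$ reproduces the classical content sum $\sum_{b \in \nu} c(b)$, while the contraction and singleton terms (the $A_i$ and $A_{i,i+1}$ summands) contribute a correction that turns out to depend only on the size $|\nu|$ and the level $k$, being quadratic in $|\nu|$ with $\delta$-linear coefficients. Feeding the resulting closed form $z_k(\nu) = \sum_{b \in \nu} c(b) + \varphi_k(|\nu|)$ into the telescoping identity then produces exactly the four-way case distinction: an integer step adding a box $\epsilon$ changes only the content sum, giving $c(\epsilon)$; an integer step with $t^k = t^{k-1/2}$ changes only $\varphi$, and one checks $\varphi_k(|t^k|) - \varphi_{k-1/2}(|t^k|) = \delta - |t^k|$; and the two half-integer cases dualise to $\delta - c(\epsilon)$ and $|t^k|$. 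As a cleaner alternative for pinning down $\varphi_k$ one may specialise $\delta = n$ for all large integers $n$, where $P_k(n) \cong \End_{S_n}\big((F^n)^{\otimes k}\big)$, transport $Z_k$ to an explicit class-sum computation on tensor space, and then invoke that the eigenvalues are polynomial in $\delta$.

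The main obstacle is precisely this central-character computation: the content-sum part is classical, but one must control the contribution of the non-group diagram terms, since it is these that produce the genuinely new, $\delta$-dependent eigenvalues $\delta - |t^k|$ and $|t^k|$, and one must show that they collapse to a size-only correction $\varphi_k(|\nu|)$ rather than something depending on the shape of $\nu$. It is here that the explicit diagram calculus of \cite{HalversonRam} (or equivalently Enyang's seminormal forms \cite{Enyang13}) is indispensable; once the centrality of the partial sums and this single scalar are in hand, both the commutativity~(i) and the diagonalisation~(ii) are comparatively formal.
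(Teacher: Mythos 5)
Your attempt cannot be compared to a proof in the paper, because the paper contains none: Theorem \ref{thm eigenvals of JM action} is imported verbatim as Theorem 3.37 of \cite{HalversonRam}, and the paper deliberately avoids even recalling the definition of the $L_k$. Judged as a self-contained argument, your skeleton is the correct and standard Okounkov--Vershik reduction. Part (i) is indeed formal once one knows that each partial sum $Z_k$ is central in $P_k(\delta)$; that input is available (it is the paper's Theorem \ref{thm sum of JMs central}, quoting \cite{Enyang13}, which holds for every integer and half-integer degree and hence at every level of the tower), and the telescoping $L_k = Z_k - Z_{k-1/2}$ then gives pairwise commutativity. Likewise, in the semisimple case the multiplicity-free restriction of Theorem \ref{thm restriction of cells} plus Schur's lemma correctly yields that $L_k$ acts on $v_t$ by $z_k(t^k) - z_{k-1/2}(t^{k-1/2})$, where $z_k(\nu)$ is the central character of $Z_k$ on $\Delta_k(\nu)$.

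The genuine gap is that the entire content of the four formulas in (ii) sits inside $z_k(\nu)$, and you never compute it. Your route (a) asserts that applying $Z_k$ to $d_0 \otimes w$ produces the content sum plus a correction $\varphi_k(|\nu|)$ depending only on size and level; but that assertion \emph{is} the theorem. The non-permutation terms of the Halverson--Ram elements do not fix the connection pattern of $d_0$, so extracting the coefficient of $d_0 \otimes w$ in $Z_k(d_0\otimes w)$ is a substantial diagram-calculus computation which the sketch does not perform, and the claimed check $\varphi_k(|t^k|)-\varphi_{k-1/2}(|t^k|)=\delta-|t^k|$ cannot even be attempted without an explicit $\varphi_k$. (For the record, what must come out, e.g.\ for $k \in \N$ and $l = |\nu|$, is $\varphi_k(l) = (k-l)(\delta-l) + \binom{l+1}{2} + (k-1-l)l$; with this the telescoping does reproduce all four cases, but it has to be proved, not posited.) Your fallback route (b) is the right way to close the hole, and is in substance Halverson--Ram's own argument: for integers $\delta = n \gg 0$ one has $P_k(n) \cong \End_{S_n}\bigl((F^n)^{\otimes k}\bigr)$, the cell module $\Delta_k(\nu)$ pairs with the $S_n$-irreducible labelled by $(n-|\nu|,\nu)$, and $Z_k$ is identified with the transposition class sum shifted by $kn - \binom{n}{2}$, giving $z_k(\nu) = \sum_{b\in\nu} c(b) - |\nu| + \binom{n-|\nu|}{2} + kn - \binom{n}{2}$; since $z_k(\nu)$ equals $\operatorname{tr}\bigl(Z_k|_{\Delta_k(\nu)}\bigr)/\dim \Delta_k(\nu)$, which is a polynomial in $\delta$ on the diagram basis, agreement at infinitely many integers determines it for all semisimple values of $\delta$. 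If you promote this, rather than the direct computation on $d_0 \otimes w$, to the main line, the proof becomes complete; as written, (ii) is not established.
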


\begin{defn}
Let $\lambda \in \Lambda^+(d)$. The weight $\wt(t)$ of a $\lambda$-tableau $t$ is the tuple  $(L_{1/2}(t),L_{1}(t),L_{3/2}(t),\ldots L_d(t))$. We say that $\lambda,\mu \in \Lambda^+(d)$ have a common JM weight if there is a $\mu$-tableau $u$ and  a $\lambda$-tableau $t$ with $\wt(u)=\wt(t)$.
\end{defn}

\begin{ex}
We represent a tableau together with its weight by a coloured graph as follows:
 $$t: {\Yvcentermath1 \begin{array}{c}
 \emptyset \stackrel{0}{\overline{ \ \ \ \ }} \yng(1)\stackrel{1}{\overline{ \ \ \ \ }}\yng(1)\stackrel{1}{\overline{ \ \ \ \ }} \yng(2) \stackrel{2}{\overline{ \ \ \ \ }} \yng(2) \stackrel{-1}{\overline{ \ \ \ \ }} \yng(2,1) \stackrel{3}{\overline{ \ \ \ \ }} \yng(2,1) \stackrel{2}{\overline{ \ \ \ \ }} \yng(3,1) \stackrel{4}{\overline{ \ \ \ \ }} \yng(3,1)
 \end{array}},$$ $$u: {\Yvcentermath1 \begin{array}{c}
\emptyset\stackrel{0}{\overline{ \ \ \ \ }} \yng(1) \stackrel{1}{\overline{ \ \ \ \ }} \yng(1) \stackrel{1}{\overline{ \ \ \ \ }}  \yng(2) \stackrel{2}{\overline{ \ \ \ \ }} \yng(2) \stackrel{-1}{\overline{ \ \ \ \ }} \yng(2,1) \stackrel{3}{\overline{ \ \ \ \ }} \yng(2,1) \stackrel{\delta-3}{\overline{ \ \ \ \ }} \yng(2,1) \stackrel{\delta-1}{\overline{ \ \ \ \ }} \yng(1,1)
 \end{array}}. $$ Notice that for $\delta=5$ the tableau $u$ and $t$ have the same weight. Thus, $(3,1)$ and $(1,1)$ have a common JM weight. One can prove that $[\Delta(1,1):L(3,1)] \neq 0$ and we will show in Theorem \ref{thm determination of dec nos} that this is not a coincidence.
\end{ex}

For the definition of projection functors onto JM weight spaces, we also need the following theorem:

\begin{theorem}[Theorem 3.10 in \cite{Enyang13}] \label{thm sum of JMs central}
The element $\displaystyle z= \sum_{k \in 1/2\N, k \leq d} L_k \in P_d(\delta) $ is central in $P_d(\delta)$.
\end{theorem}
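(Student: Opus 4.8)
The plan is to avoid the explicit form of the Jucys--Murphy elements entirely and instead deduce centrality from the eigenvalue formula of Theorem \ref{thm eigenvals of JM action} together with a specialization argument in the parameter. The structure constants of $P_d(\delta)$ in the diagram basis are monomials in $\delta$ (concatenation removes loops, each contributing one factor of $\delta$), and the elements $L_k$ of \cite{HalversonRam} are fixed combinations of diagrams whose coefficients are rational in $\delta$ over $\Q$. Hence $z=\sum_k L_k$ and every commutator $[z,x]$, for $x$ a basis diagram, has coordinates lying in $\Q(\delta)$ with only finitely many poles. Over $\C$ the algebra $P_d(\delta)$ is semisimple for all but finitely many $\delta$ (this is classical by Martin, and the half-integer case reduces to the integer one via Theorem \ref{thm morita equiv of half with full}). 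So if I prove $[z,x]=0$ for all of these cofinitely many $\delta$, each coordinate is a rational function vanishing at infinitely many points, hence identically zero; the identity $[z,x]=0$ then specializes to an arbitrary field $F$ and arbitrary $\delta\in F$. Thus it suffices to treat the semisimple case.

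In the semisimple case an element is central if and only if it acts as a scalar on every irreducible module. By Theorem \ref{thm classification of simples} the irreducibles are exactly the cell modules $\Delta(\lambda)$ with $\lambda\in\Lambda^+(d)$, and by Theorem \ref{thm eigenvals of JM action}(ii) each $\Delta(\lambda)$ has the canonical basis $\{v_t\}_{t\in\UD(\lambda)}$ of simultaneous eigenvectors with $z\cdot v_t=\big(\sum_k L_k(t)\big)v_t$. Therefore the entire theorem reduces to the combinatorial claim that the total weight $\sum_k L_k(t)$ depends only on $\lambda$ and not on the tableau $t$.

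For this last step I would read off $\sum_k L_k(t)$ from the four cases of Theorem \ref{thm eigenvals of JM action}(ii). Writing $C(\lambda)=\sum_{b\in\lambda}c(b)$ and $l=|\lambda|$, the added/removed contents telescope to $C(\lambda)$, and the add/stay/remove bookkeeping yields
\[
\sum_k L_k(t)=C(\lambda)+(r-l)\delta+\Big(\textstyle\sum_{\text{half stays}}|t^k|-\sum_{\text{integer stays}}|t^k|\Big),
\]
where the coefficient of $\delta$ is $r-l$ because for every tableau of $\lambda$ the number of integer-level stay steps plus the number of remove steps is forced to equal $r-l$ (a global count from $N_{\mathrm{add}}-N_{\mathrm{rem}}=l$). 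The first two summands manifestly depend only on $\lambda$. For the bracketed difference I would discard the stay steps, which never change $|t^k|$, and record only the sizes produced by the add/remove steps: these trace out a lattice path from $0$ to $l$ with unit up/down steps, and summation by parts identifies the difference with $\sum_h h\,(u_h-d_h)$, where $u_h-d_h$ is the net number of upward crossings of the edge between heights $h$ and $h+1$. For a path from $0$ to $l$ this net crossing number is $1$ when $0\le h\le l-1$ and $0$ otherwise, so the difference equals $\binom{l}{2}$ (with an additional $+l$ in the half-integer case, contributed by the single surplus half-level), again independent of $t$.

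The only genuine obstacle is this final invariance: showing that the non-telescoping ``size'' contribution is path-independent, since it is precisely the part that cannot be written as a difference of a local potential. Everything else is formal. An alternative route would be to expand the explicit Murphy elements of \cite{HalversonRam} and exhibit $z$ as a manifestly conjugation-invariant sum of diagrams, but that requires their precise definition, which the approach above deliberately sidesteps.
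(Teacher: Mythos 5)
The paper itself contains no proof of this statement: it is imported wholesale from Enyang \cite{Enyang13} (Theorem 3.10 there), where centrality is established by direct algebraic computation with the explicit/recursive form of the $L_k$, checking commutation with generators. Your route is therefore genuinely different, and it is correct. The reduction is sound: structure constants in the diagram basis are powers of $\delta$, the semisimple locus over $\C$ is cofinite in $\delta$ (Martin--Saleur, with Theorem \ref{thm morita equiv of half with full} handling half-integer degree), centrality in a split semisimple algebra is equivalent to acting by a scalar on each irreducible, and by Theorems \ref{thm classification of simples} and \ref{thm eigenvals of JM action} this reduces to tableau-independence of $\sum_k L_k(t)$. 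Your bookkeeping there is right: with $l=|\lambda|$, the contents telescope to $C(\lambda)$, the number of integer stays plus removals is $r-(|A|-|R|)=r-l$, and the residual size term is, after converting the sum over stay-steps into a sum over add/remove steps via the alternating integer/half-integer structure, exactly $\sum_h h(u_h-d_h)=\binom{l}{2}$ (plus $l$ in the half-integer case), so $z$ acts on $\Delta(\lambda)$ by
\[
C(\lambda)+(r-l)\delta+\tbinom{l}{2}\quad\bigl(+\,l\ \text{if }d=r+\tfrac12\bigr),
\]
which indeed depends only on $\lambda$; one can sanity-check this against the paper's own example, where both displayed tableaux for $\delta=5$ have total weight $12$. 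Two points of comparison. First, your stated aim of sidestepping the definition of the $L_k$ entirely is not fully achievable: the density argument needs their coefficients to be given by a single formula polynomial (or at least pole-free) in $\delta$, and the specialization to positive characteristic additionally needs those coefficients to be $p$-integral -- rationality over $\Q$ alone, as you phrase it, would not suffice in characteristic $p$. Both facts hold for the Halverson--Ram elements (they are $\Z[\delta]$-combinations of diagrams), but they must be read off from the definition, so this is a dependency to acknowledge rather than a gap. Second, what each approach buys: Enyang's computation is parameter- and characteristic-free from the start and needs no semisimplicity input, whereas your argument is essentially computation-free given Theorem \ref{thm eigenvals of JM action} and yields as a by-product the explicit central character above -- precisely the block invariant that drives the linkage arguments of Sections 5 and 6 of the paper.
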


\section{Combinatorial setup}
In this section, we will introduce diagrams which are similar to the arrow diagrams introduced by Brundan and Stroppel in \cite{BrundanStroppel}. These diagrams also appear in the work of Cox and De Visscher \cite{CoxDeVisscher} where the calculus is used to determine the decomposition numbers of Brauer algebras over $\C$. The author has shown how this diagrammatic calculus can be naturally derived from the combinatorics of JM elements in the Brauer algebra case \cite{JMpaper}. We will adopt a similar approach here. The arrow diagrams in question are defined as follows:

\begin{defn}
Given a partition $\lambda=(\lambda_1,\lambda_2,\ldots) \in \Lambda^+(d)$, (where we allow zero entries) we define its diagram or arrow diagram $d(\lambda)$ as follows:

If the characteristic of the field is $0$, then the arrow diagram consists of a doubly infinite line with positions above and below the line labelled as follows:
\vspace{0.5cm}
$$
\begin{picture}(0,0)(100,5) 
\multiput(-14,5.5)(35,0){7}{$\scriptstyle\bullet$}
\multiput(22.5,8)(35,0){6}{\line(-1,0){35} }
\put(212.5,8){\line(-1,0){15} }
\put(220,4.5){$\cdots$}
\put(-40,4.5){$\cdots$}

\put(-11.5,20){\makebox(	0	,0){$\scriptstyle  3$}}
\put(	23.5	,20){\makebox( 0 ,0){$\scriptstyle 2$}}
\put(	58.5	,20){\makebox( 0 ,0){$\scriptstyle 1$}}
\put(	93.5	,20){\makebox( 0 ,0){$\scriptstyle 0 $}}
\put(	128.5	,20){\makebox( 0 ,0){$\scriptstyle -1$}}
\put(	163.5	,20){\makebox( 0 ,0){$\scriptstyle -2$}}
\put(	198.5	,20){\makebox( 0 ,0){$\cdots$}}

\put(-11.5,-5){\makebox(	0	,0){$\scriptstyle \delta-3 $}}
\put(	23.5	,-5){\makebox( 0 ,0){$\scriptstyle \delta-2 $}}
\put(	58.5	,-5){\makebox( 0 ,0){$\scriptstyle \delta-1 $}}
\put(	93.5	,-5){\makebox( 0 ,0){$\scriptstyle \delta $}}
\put(	128.5	,-5){\makebox( 0 ,0){$\scriptstyle \delta+1 $}}
\put(	163.5	,-5){\makebox( 0 ,0){$\scriptstyle  \delta+2$}}
\put(	198.5	,-5){\makebox( 0 ,0){$\cdots$}}
\end{picture}$$   
\vspace{0.1cm}
\newline Thus, opposite labels always sum to $\delta$. For each part $\lambda_i$ of $\lambda$, we draw a $\bigvee$ above the line at position $\lambda_i-i$ (that is, at the content of the last box of each row). Furthermore, we draw a single $\bigwedge$ below the line at position $|\lambda|$.

If the characteristic $p$ of the field  is non-zero, then the arrow diagram is very similar to the usual abacus defined for symmetric groups, see for example \cite{JamesKerber}. The arrow diagram/abacus is defined as follows: It consists of an infinite number of lines with $p$ positions each and labelled by $k \in \Z$ called the runners of the diagram. There is one distinguished runner corresponding to $k=0$ which has labels above and below the $p$ positions, but where the labels below the runner are considered modulo $p$.
\vspace{0.5cm}
$$
\begin{picture}(0,0)(100,5) 
\multiput(-14,5.5)(35,0){6}{$\scriptstyle\bullet$}
\multiput(22.5,8)(35,0){5}{\line(-1,0){35} }

\put(-11.5,20){\makebox(	0	,0){$\scriptstyle  p-1$}}
\put(	23.5	,20){\makebox( 0 ,0){$\scriptstyle p-2$}}
\put(	58.5	,20){\makebox( 0 ,0){$\scriptstyle \cdots$}}
\put(	93.5	,20){\makebox( 0 ,0){$\scriptstyle 2 $}}
\put(	128.5	,20){\makebox( 0 ,0){$\scriptstyle 1$}}
\put(	163.5	,20){\makebox( 0 ,0){$\scriptstyle 0$}}

\put(-11.5,-5){\makebox(	0	,0){$\scriptstyle \delta+1 $}}
\put(	23.5	,-5){\makebox( 0 ,0){$\scriptstyle \delta+2 $}}
\put(	58.5	,-5){\makebox( 0 ,0){$\scriptstyle \cdots $}}
\put(	93.5	,-5){\makebox( 0 ,0){$\scriptstyle \delta-2 $}}
\put(	128.5	,-5){\makebox( 0 ,0){$\scriptstyle \delta-1 $}}
\put(	163.5	,-5){\makebox( 0 ,0){$\scriptstyle  \delta$}}

\end{picture}$$   
\vspace{0.1cm}\\
The runners corresponding to $k \neq 0$ are labelled as follows:
\vspace{0.5cm}
$$
\begin{picture}(0,0)(100,5) 
\multiput(-14,5.5)(35,0){6}{$\scriptstyle\bullet$}
\multiput(22.5,8)(35,0){5}{\line(-1,0){35} }

\put(-11.5,20){\makebox(	0	,0){$\scriptstyle  kp+p-1$}}
\put(	23.5	,20){\makebox( 0 ,0){$\scriptstyle kp+p-2$}}
\put(	58.5	,20){\makebox( 0 ,0){$\scriptstyle \cdots$}}
\put(	93.5	,20){\makebox( 0 ,0){$\scriptstyle kp+2 $}}
\put(	128.5	,20){\makebox( 0 ,0){$\scriptstyle kp+1$}}
\put(	163.5	,20){\makebox( 0 ,0){$\scriptstyle kp$}}

\end{picture}$$  
The runners are vertically stacked on top of each other in such a way that in each column of the resulting diagram the labels are congruent modulo $p$. We refer to the column containing labels congruent to $i$ modulo $p$ as column $i$.\\ 
As in the characteristic $0$ case, we draw a $\bigvee$ at position $\lambda_i-i$ for each part $\lambda_i$. Furthermore, we draw a $\bigwedge$ at position $|\lambda|$ but now considered modulo $p$.
\end{defn}

\begin{rk}
Notice that $\lambda$ and $d(\lambda)$ determine each other and we will often identify them. Furthermore, adding a box to $\lambda$ in some row corresponds to move the arrow $\bigvee$ corresponding to that row up by $1$ in the arrow diagram, that is to a position with a higher label. Also, a box is addable if and only if the next higher position is empty in the arrow diagram. Similar statements hold for removable boxes. We will see in Figure~\ref{figure directions to add arrows} that arrow diagrams are additionally particularly well behaved with respect to the action of the JM elements.
\end{rk}
\begin{ex}
Both definitions are best understood by an example. We consider the partition $\lambda=(3,2,2,2,1,0,0,\ldots)$ with $\delta=8$. The contents of the last boxes in each row are $(3,1,0,-1,-3,-5,-6,\ldots)$ and $|\lambda|=10$. Therefore, if $p=0$ we get the following arrow diagram $d(\lambda)$:
\vspace{0.5cm}
$$
\begin{picture}(0,0)(160,5) 
\multiput(-14,5.5)(35,0){11}{$\scriptstyle\bullet$}
\multiput(22.5,8)(35,0){10}{\line(-1,0){35} }

\put(-25.5,7.5){\makebox(	0	,0){$  \cdots$}}
\put(355,7.5){\makebox(	0	,0){$  \cdots$}}

\put(	23.5	,13){\makebox( 0 ,0){$\scriptstyle \bigvee$}}
\put(	93.5	,13){\makebox( 0 ,0){$\scriptstyle \bigvee $}}
\put(	128.5	,13){\makebox( 0 ,0){$\scriptstyle \bigvee$}}
\put(163.5,13){\makebox(	0	,0){$\scriptstyle  \bigvee$}}
\put(	233.5	,13){\makebox( 0 ,0){$\scriptstyle \bigvee$}}
\put(	303.5	,13){\makebox( 0 ,0){$\scriptstyle \bigvee$}}
\put(	338.5	,13){\makebox( 0 ,0){$\scriptstyle \bigvee$}}

\put(	163.5	,3){\makebox( 0 ,0){$\scriptstyle \bigwedge$}}

\put(-11.5,25){\makebox(	0	,0){$\scriptstyle  3$}}
\put(	23.5	,25){\makebox( 0 ,0){$\scriptstyle 2$}}
\put(	58.5	,25){\makebox( 0 ,0){$\scriptstyle 1$}}
\put(	93.5	,25){\makebox( 0 ,0){$\scriptstyle 0 $}}
\put(	128.5	,25){\makebox( 0 ,0){$\scriptstyle -1$}}
\put(163.5,25){\makebox(	0	,0){$\scriptstyle  -2$}}
\put(	198.5	,25){\makebox( 0 ,0){$\scriptstyle -3$}}
\put(	233.5	,25){\makebox( 0 ,0){$\scriptstyle -4$}}
\put(	268.5	,25){\makebox( 0 ,0){$\scriptstyle -5 $}}
\put(	303.5	,25){\makebox( 0 ,0){$\scriptstyle -6$}}
\put(	338.5	,25){\makebox( 0 ,0){$\scriptstyle -7$}}

\put(-11.5,-8){\makebox(	0	,0){$\scriptstyle  5$}}
\put(	23.5	,-8){\makebox( 0 ,0){$\scriptstyle 6$}}
\put(	58.5	,-8){\makebox( 0 ,0){$\scriptstyle 7$}}
\put(	93.5	,-8){\makebox( 0 ,0){$\scriptstyle 8 $}}
\put(	128.5	,-8){\makebox( 0 ,0){$\scriptstyle 9$}}
\put(163.5,-8){\makebox(	0	,0){$\scriptstyle  10$}}
\put(	198.5	,-8){\makebox( 0 ,0){$\scriptstyle 11$}}
\put(	233.5	,-8){\makebox( 0 ,0){$\scriptstyle 12$}}
\put(	268.5	,-8){\makebox( 0 ,0){$\scriptstyle 13 $}}
\put(	303.5	,-8){\makebox( 0 ,0){$\scriptstyle 14$}}
\put(	338.5	,-8){\makebox( 0 ,0){$\scriptstyle 15$}}

\end{picture}$$   
\vspace{0.1cm}\\
If $p=5$, and $\lambda=(3,2,2,2,1,0,0,\ldots)$ and $\delta=8$ are as before, then the arrow diagram would look like:
\vspace{2cm}
$$
\begin{picture}(0,0)(70,5) 
\multiput(-14,5.5)(35,0){5}{$\scriptstyle\bullet$}
\multiput(22.5,8)(35,0){4}{\line(-1,0){35} }

\put(-11.5,14){\makebox(	0	,0){$\bigvee$}}
\put(23.5,14){\makebox(	0	,0){$\bigvee$}}
\put(58.5,14){\makebox(	0	,0){$\bigvee$}}
\put(93.5,14){\makebox(	0	,0){$\bigvee$}}
\put(128.5,14){\makebox(	0	,0){$\bigvee$}}

\put(-11.5,25){\makebox(	0	,0){$\scriptstyle  -6$}}
\put(	23.5	,25){\makebox( 0 ,0){$\scriptstyle -7$}}
\put(	58.5	,25){\makebox( 0 ,0){$\scriptstyle -8$}}
\put(	93.5	,25){\makebox( 0 ,0){$\scriptstyle -9 $}}
\put(	128.5	,25){\makebox( 0 ,0){$\scriptstyle -10$}}

\end{picture}$$   
\vspace{0.5cm}
$$
\begin{picture}(0,0)(70,5) 
\multiput(-14,5.5)(35,0){5}{$\scriptstyle\bullet$}
\multiput(22.5,8)(35,0){4}{\line(-1,0){35} }

\put(-11.5,14){\makebox(	0	,0){$\bigvee$}}
\put(23.5,14){\makebox(	0	,0){$\bigvee$}}
\put(93.5,14){\makebox(	0	,0){$\bigvee$}}

\put(-11.5,25){\makebox(	0	,0){$\scriptstyle  -1$}}
\put(	23.5	,25){\makebox( 0 ,0){$\scriptstyle -2$}}
\put(	58.5	,25){\makebox( 0 ,0){$\scriptstyle -3$}}
\put(	93.5	,25){\makebox( 0 ,0){$\scriptstyle -4 $}}
\put(	128.5	,25){\makebox( 0 ,0){$\scriptstyle -5$}}

\end{picture}$$      
\vspace{0.5cm}
$$
\begin{picture}(0,0)(70,5) 
\multiput(-14,5.5)(35,0){5}{$\scriptstyle\bullet$}
\multiput(22.5,8)(35,0){4}{\line(-1,0){35} }
\put(58.5,14){\makebox(	0	,0){$\bigvee$}}
\put(128.5,14){\makebox(	0	,0){$\bigvee$}}
\put(23.5,1){\makebox(	0	,0){$\bigwedge$}}

\put(-11.5,25){\makebox(	0	,0){$\scriptstyle  4$}}
\put(	23.5	,25){\makebox( 0 ,0){$\scriptstyle 3$}}
\put(	58.5	,25){\makebox( 0 ,0){$\scriptstyle 2$}}
\put(	93.5	,25){\makebox( 0 ,0){$\scriptstyle 1 $}}
\put(	128.5	,25){\makebox( 0 ,0){$\scriptstyle 0$}}

\put(-11.5,-8){\makebox(	0	,0){$\scriptstyle 4 $}}
\put(	23.5	,-8){\makebox( 0 ,0){$\scriptstyle 0 $}}
\put(	58.5	,-8){\makebox( 0 ,0){$\scriptstyle 1 $}}
\put(	93.5	,-8){\makebox( 0 ,0){$\scriptstyle 2 $}}
\put(	128.5	,-8){\makebox( 0 ,0){$\scriptstyle 3 $}}

\end{picture}$$   
\vspace{0.5cm}
$$
\begin{picture}(0,0)(70,5) 
\multiput(-14,5.5)(35,0){5}{$\scriptstyle\bullet$}
\multiput(22.5,8)(35,0){4}{\line(-1,0){35} }

\put(-11.5,25){\makebox(	0	,0){$\scriptstyle  9$}}
\put(	23.5	,25){\makebox( 0 ,0){$\scriptstyle 8$}}
\put(	58.5	,25){\makebox( 0 ,0){$\scriptstyle 7$}}
\put(	93.5	,25){\makebox( 0 ,0){$\scriptstyle 6 $}}
\put(	128.5	,25){\makebox( 0 ,0){$\scriptstyle 5$}}

\end{picture}$$  
\end{ex}

\begin{rk}
\label{rk choice of abacus labels non rigid}
One could also view the characteristic $0$ diagram as the limiting case for $p$ going to $\infty$.
\end{rk}

In some sense, the choice of labels is arbitrary. We could for example perform a cyclic shift of the columns of the abacus and this definition would be just as good. In the case when the characteristic $p$ of the ground field is larger than the degree $d$ of the partition algebra, we will exploit this and the fact that the contents of boxes appearing in tableaux of a given partition are contained in a range which is smaller than $r+1$ and, therefore, less than or equal to $p$:

\begin{defn} For $p>r$, a partition $\mu=(\mu_1,\ldots,\mu_k)$ of $r$ and $\lambda \in \Lambda^+(r)$ with $\lambda \subseteq \mu$, we define the $\mu$-projection of the abacus $d(\lambda)$ as follows: It consists of a single runner with $p$ positions with labels above and below as follows: 
\vspace{0.5cm}
$$
\begin{picture}(0,0)(100,5) 
\multiput(-14,5.5)(35,0){6}{$\scriptstyle\bullet$}
\multiput(22.5,8)(35,0){5}{\line(-1,0){35} }

\put(-11.5,20){\makebox(	0	,0){$\scriptstyle  p-k-2$}}
\put(	23.5	,20){\makebox( 0 ,0){$\scriptstyle p-k-3$}}
\put(	58.5	,20){\makebox( 0 ,0){$\scriptstyle \cdots$}}
\put(	93.5	,20){\makebox( 0 ,0){$\scriptstyle -k+1 $}}
\put(	128.5	,20){\makebox( 0 ,0){$\scriptstyle -k$}}
\put(	163.5	,20){\makebox( 0 ,0){$\scriptstyle -k-1$}}

\put(-11.5,-5){\makebox(	0	,0){$\scriptstyle \delta+k+2 $}}
\put(	23.5	,-5){\makebox( 0 ,0){$\scriptstyle \delta+k+3 $}}
\put(	58.5	,-5){\makebox( 0 ,0){$\scriptstyle \cdots $}}
\put(	93.5	,-5){\makebox( 0 ,0){$\scriptstyle \delta+k-1 $}}
\put(	128.5	,-5){\makebox( 0 ,0){$\scriptstyle \delta+k $}}
\put(	163.5	,-5){\makebox( 0 ,0){$\scriptstyle  \delta+k+1$}}

\end{picture}$$   
\vspace{0.1cm}\\
where the labels above are integers and the labels below are in $\Z/p\Z$. For $i=1,\ldots,k$, we draw a $\bigvee$ above the line at position $\lambda_i-i$ and a $\bigwedge$ below the line at position $|\lambda| \pmod{p}$.

When $\mu$ is clear from the context and $p>r$, then we will usually identify the $\mu$-projection of $d(\lambda)$ with $d(\lambda)$.
For example, if $p=11$, $\delta=1$, $\lambda=(2,2,1,1)$ and $\mu=(3,3,2,1,1)$ (so that $k=5$), then  the $\mu$-projection of $d(\lambda)$ is
\vspace{0.6cm}
$$
\begin{picture}(0,0)(160,5) 
\multiput(-14,5.5)(35,0){11}{$\scriptstyle\bullet$}
\multiput(22.5,8)(35,0){10}{\line(-1,0){35} }

\put(	93.5	,13){\makebox( 0 ,0){$\scriptstyle \bigvee $}}
\put(	128.5	,13){\makebox( 0 ,0){$\scriptstyle \bigvee$}}
\put(	198.5	,13){\makebox( 0 ,0){$\scriptstyle \bigvee$}}
\put(	233.5	,13){\makebox( 0 ,0){$\scriptstyle \bigvee$}}
\put(	303.5	,13){\makebox( 0 ,0){$\scriptstyle \bigvee$}}

\put(	303.5	,3){\makebox( 0 ,0){$\scriptstyle \bigwedge$}}

\put(-11.5,25){\makebox(	0	,0){$\scriptstyle  4$}}
\put(	23.5	,25){\makebox( 0 ,0){$\scriptstyle 3$}}
\put(	58.5	,25){\makebox( 0 ,0){$\scriptstyle 2$}}
\put(	93.5	,25){\makebox( 0 ,0){$\scriptstyle 1 $}}
\put(	128.5	,25){\makebox( 0 ,0){$\scriptstyle 0$}}
\put(163.5,25){\makebox(	0	,0){$\scriptstyle  -1$}}
\put(	198.5	,25){\makebox( 0 ,0){$\scriptstyle -2$}}
\put(	233.5	,25){\makebox( 0 ,0){$\scriptstyle -3$}}
\put(	268.5	,25){\makebox( 0 ,0){$\scriptstyle -4 $}}
\put(	303.5	,25){\makebox( 0 ,0){$\scriptstyle -5$}}
\put(	338.5	,25){\makebox( 0 ,0){$\scriptstyle -6$}}

\put(-11.5,-8){\makebox(	0	,0){$\scriptstyle  8$}}
\put(	23.5	,-8){\makebox( 0 ,0){$\scriptstyle 9$}}
\put(	58.5	,-8){\makebox( 0 ,0){$\scriptstyle 10$}}
\put(	93.5	,-8){\makebox( 0 ,0){$\scriptstyle 0 $}}
\put(	128.5	,-8){\makebox( 0 ,0){$\scriptstyle 1$}}
\put(163.5,-8){\makebox(	0	,0){$\scriptstyle  2$}}
\put(	198.5	,-8){\makebox( 0 ,0){$\scriptstyle 3$}}
\put(	233.5	,-8){\makebox( 0 ,0){$\scriptstyle 4$}}
\put(	268.5	,-8){\makebox( 0 ,0){$\scriptstyle 5 $}}
\put(	303.5	,-8){\makebox( 0 ,0){$\scriptstyle 6$}}
\put(	338.5	,-8){\makebox( 0 ,0){$\scriptstyle 7$}}

\end{picture}$$   
\vspace{0.1cm}
\end{defn}
\begin{rk}
Notice that  since $\lambda \subseteq \mu$, the box with smallest content in $\lambda$ has content at least $-k$ (this corresponds to $\lambda_k=0$) and the largest possible content is $r-k-1 \leq p-k-2$ (which comes from the hook $\mu=(r-k+1,1^{k-1}))$. Thus, the $\mu$-projection of $d(\lambda)$ is well defined and there will never be a $\bigvee$ at label $-k-1$.
\end{rk}

We will frequently use shorthand notation for arrow diagrams. For example, the $\mu$-projection of $\lambda$ above will be either be represented as $\cdots \overset{1}{\bigvee} \bigvee \bigwedge \bigvee \bigvee \bigcirc \bigvee \cdots$ where $\bigvee$ stands for a column which only contains a $\bigvee$, and $\bigwedge$/$\bigtimes$/$\bigcirc$ represent columns containing only a $\bigwedge$, both a $\bigvee$ and a $\bigwedge$, or no arrows at all, respectively. The label at the top/bottom of a symbol is precisely the label above/below the line in that column and specifying one will determine all other labels. Sometimes we represent $\bigvee/\bigwedge/\bigtimes/\bigcirc$ in extended notation by $\raisebox{5pt}{$\vc$}/\raisebox{5pt}{$\wc$}/\raisebox{5pt}{$\xc$}/\raisebox{5pt}{$\oc$}$.

We will now connect the study of arrow diagrams with tableaux and the action of JM elements.
Our aim is to relate the property of having a common JM weight to the study of composition factors of the cell modules $\Delta(\lambda)$. One quite easily obtains the following relationship which is proved in Corollary 2.2 in \cite{JMpaper} in a more general context.

\begin{prop}
\label{prop same cf means same wt} Let $d\in \{r,r+1/2 \}$,
 $\lambda,\mu \in \Lambda^+(d)$ and suppose $L(\mu)$ is a composition factor of $\Delta(\lambda)$. Then $\lambda$ and $\mu$ have a common JM weight. If $\mu$ is a partition of $r$, then for every $\mu$-tableau $u$ there is a $\lambda$-tableau $t$ with $\wt(u)=\wt(t)$.
\end{prop}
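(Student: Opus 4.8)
The plan is to read both statements off the generalized simultaneous weight-space decomposition of modules under the commuting family $L_{1/2},\dots,L_d$ (Theorem~\ref{thm eigenvals of JM action}(i)). For a finite-dimensional $P_d(\delta)$-module $M$ and a tuple $\mathbf{w}=(w_{1/2},\dots,w_d)$ of scalars, write $M_{\mathbf{w}}$ for the subspace on which every $L_k-w_k$ acts nilpotently. Since the $L_k$ commute we get $M=\bigoplus_{\mathbf{w}}M_{\mathbf{w}}$, and because each summand is cut out by an idempotent in the (completed) algebra generated by the $L_k$, the assignment $M\mapsto M_{\mathbf{w}}$ is an exact functor on finite-dimensional modules. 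The heart of the argument is the following key lemma: the tuples $\mathbf{w}$ with $\dla_{\mathbf{w}}\neq 0$ are exactly the weights $\{\wt(t)\mid t\in\UD(\lambda)\}$, computed in $F$.

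To establish the key lemma I would pass to the integral form. The tableau basis of $\dla$ and the elements $L_k$ are defined over $\Z[\delta]$, and over a field with generic parameter the algebra is semisimple, so by Theorem~\ref{thm eigenvals of JM action}(ii) each $L_k$ is diagonalizable on $\dla$ with characteristic polynomial $\prod_{t\in\UD(\lambda)}\bigl(X-L_k(t)\bigr)$. Characteristic polynomials specialize under ring homomorphisms, and the scalars $L_k(t)$ lie in $\Z[\delta]$ and specialize to their values in $F$; hence over $F$ every simultaneous generalized eigenvalue of the $L_k$ on $\dla$ is the $F$-value of some $\wt(t)$, and conversely each $\wt(t)$ occurs. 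Equivalently, the $L_k$ act triangularly on the tableau basis with diagonal entry $L_k(t)$ on $v_t$. This upgrade of Theorem~\ref{thm eigenvals of JM action} from the semisimple case to a triangular action over an arbitrary $F$ is the main obstacle, and it is exactly what the general framework of~\cite{JMpaper} (following Mathas) provides; the specialization argument above is the route by which I would reduce it to the known semisimple computation.

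Granting the lemma, the first assertion is immediate. Choose any weight $\mathbf{w}$ occurring in $\lmu$, which is possible since $\lmu\neq 0$. Because $\lmu$ is the head, hence a quotient, of $\dmu$, exactness of $(-)_{\mathbf{w}}$ gives a surjection $\dmu_{\mathbf{w}}\twoheadrightarrow \lmu_{\mathbf{w}}$, so $\dmu_{\mathbf{w}}\neq 0$ and $\mathbf{w}=\wt(u)$ for some $u\in\UD(\mu)$ by the lemma applied to $\mu$. Because $\lmu$ is a composition factor, hence a subquotient, of $\dla$, exactness likewise forces $\dla_{\mathbf{w}}\neq 0$, so $\mathbf{w}=\wt(t)$ for some $t\in\UD(\lambda)$. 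Thus $\wt(t)=\wt(u)$ and $\lambda,\mu$ have a common JM weight.

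For the refined statement I would use that in the range $p=0$ or $p>r$ governing this part of the paper the group algebra $FS_r$ is semisimple. When $\mu$ is a partition of $r$ the cell module $\dmu$ lies in the top layer and is annihilated by $J_{r-1}$, so it factors through $P_r(\delta)/J_{r-1}\cong FS_r$, where it is the Specht module $S(\mu)$. As $FS_r$ is semisimple this is simple over $FS_r$, hence simple over $P_r(\delta)$, so $\lmu=\dmu$ carries its full tableau basis $\{v_u\}_{u\in\UD(\mu)}$; by the triangular action each $v_u$ realizes the weight $\wt(u)$, whence $\lmu_{\wt(u)}\neq 0$ for \emph{every} $u\in\UD(\mu)$. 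Since $\lmu$ is a subquotient of $\dla$, exactness of $(-)_{\wt(u)}$ yields $\dla_{\wt(u)}\neq 0$, and the key lemma produces a $\lambda$-tableau $t$ with $\wt(t)=\wt(u)$. The only point that genuinely uses $|\mu|=r$ is the identification $\lmu=\dmu$, which is where the semisimplicity of $FS_r$ enters; everything else is formal manipulation of generalized weight spaces.
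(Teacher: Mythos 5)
Your argument is correct, and it is in substance the argument the paper never writes out: the paper's entire proof of this proposition is the citation of Corollary 2.2 of \cite{JMpaper}, which works in Mathas' general framework of cellular algebras with JM elements \cite{Mathaspaper}, and the content of that result is exactly your combination of exact generalized-weight-space functors with an integrally defined triangular action of the $L_k$. You also locate the real difficulty correctly: specializing characteristic polynomials controls the spectrum of each $L_k$ separately, but it cannot exclude ``mixed'' simultaneous eigenvalue tuples assembled from different tableaux, so your key lemma genuinely needs the $L_k$ to act triangularly on an integral (cellular/Murphy-type) basis of $\Delta(\lambda)$ --- not on the seminormal basis $\{v_t\}$ of Theorem \ref{thm eigenvals of JM action}, which only exists after inverting denominators --- and that triangularity is precisely the axiom of the Mathas/\cite{JMpaper} framework, i.e.\ the same external input the paper itself leans on. So your write-up is a legitimate unpacking of the citation rather than a different route.

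One genuine discrepancy deserves note: your proof of the refined statement assumes $p=0$ or $p>r$ (so that $FS_r$ is semisimple and $L(\mu)=\Delta(\mu)$), whereas the paper states the proposition with no hypothesis on $p$. Your hypothesis is not an artifact of your method; it is necessary. Take $p=2$, $r=4$, $d=r$, $\lambda=(2,2)$, $\mu=(3,1)$: by Theorem \ref{thm form of dec matrix}, $[\Delta(\lambda):L(\mu)]=[S^{(2,2)}:D^{(3,1)}]=1$ over a field of characteristic $2$, but the $\mu$-tableau that fills the first row before the second has content sequence $(0,1,2,-1)\equiv(0,1,0,1)\pmod 2$, while both standard $(2,2)$-tableaux have content sequence congruent to $(0,1,1,0)$; since the half-integer entries of the weights are the sizes $0,1,2,3$ for every tableau of a partition of $4$, no $\lambda$-tableau matches that $\mu$-tableau's weight. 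Hence the second assertion fails in small characteristic, and your version --- which covers every context in which the paper actually applies it, e.g.\ Theorem \ref{thm determination of dec nos} --- is the correct statement.
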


The aim of this paper is to show that the converse is also true if $\mu$ is a partition of $r$ and the characteristic of the field does not divide $r$. By Theorem \ref{thm form of dec matrix}, this will also determine the decomposition number $[\Delta(\lambda):L(\mu)]$ for arbitrary $\mu$.

It turns out that having a common JM weight is quite restrictive. If we view a tableau as a construction plan for the corresponding partition, then the JM weight only leaves few possibilities for this construction. If the $k$th entry of the JM weight is $i$, that is, the JM element $L_k$ acts by $i$ on the canonical basis vector corresponding to the given tableau, then there are only at most two possibilities: If $k$ is not a natural number, then we may either add a box of content $i$ at the $k$th step or leave the partition unchanged if the size of the partition in question is $\delta-i$. If $k$ is a natural number, then we may either remove a box of content $\delta-i$ or leave the partition unchanged if its size is $i$.  The characteristic $0$ case is visualized in Figure \ref{figure directions to add arrows}.
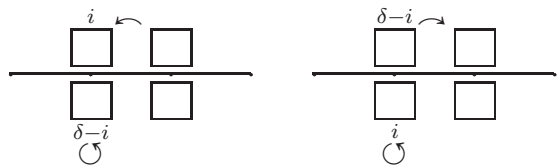
\begin{figure}
\caption{Possible ways to move arrows with JM eigenvalue $i$.}
\label{figure directions to add arrows}
\subfigure[From half to full degree]{$
\begin{picture}(110,60)
\put(30,20){\makebox{$\underset{\delta-i}{\boxed{\phantom{\times}}}$}}
\put(30,20){\makebox{$\boxed{\phantom{\times}}$}}
\put(30,40){\makebox{$\overset{i}{\boxed{\phantom{\times}}}$}}
\put(60,20){\makebox{$\boxed{\phantom{\times}}$}}
\put(60,40){\makebox{$\boxed{\phantom{\times}}$}}
\put(6,30){\makebox{$\cdot$}}
\put(6,30){\makebox{$\cdot$}}
\put(36,30){\makebox{$\cdot$}}
\put(36,30){\makebox{$\cdot$}}
\put(7,33){\line(1,0){30}}
\put(66,30){\makebox{$\cdot$}}
\put(66,30){\makebox{$\cdot$}}
\put(37,33){\line(1,0){30}}
\put(96,30){\makebox{$\cdot$}}
\put(96,30){\makebox{$\cdot$}}
\put(67,33){\line(1,0){30}}
\put(46,50){\makebox{$\curvearrowleft$}}
\put(33,0){\makebox{\rotatebox[origin=c]{0}{$\circlearrowleft$}}}
\end{picture} $}
\subfigure[From full to half degree]{$
\begin{picture}(110,60)
\put(30,40){\makebox{$\overset{\delta-i}{\boxed{\phantom{\times}}}$}}
\put(30,20){\makebox{$\boxed{\phantom{\times}}$}}
\put(30,40){\makebox{$\boxed{\phantom{\times}}$}}
\put(30,20){\makebox{$\underset{i}{\boxed{\phantom{\times}}}$}}
\put(60,20){\makebox{$\boxed{\phantom{\times}}$}}
\put(60,40){\makebox{$\boxed{\phantom{\times}}$}}
\put(6,30){\makebox{$\cdot$}}
\put(6,30){\makebox{$\cdot$}}
\put(36,30){\makebox{$\cdot$}}
\put(36,30){\makebox{$\cdot$}}
\put(7,33){\line(1,0){30}}
\put(66,30){\makebox{$\cdot$}}
\put(66,30){\makebox{$\cdot$}}
\put(37,33){\line(1,0){30}}
\put(96,30){\makebox{$\cdot$}}
\put(96,30){\makebox{$\cdot$}}
\put(67,33){\line(1,0){30}}
\put(46,50){\makebox{$\curvearrowright$}}
\put(33,0){\makebox{\rotatebox[origin=c]{0}{$\circlearrowleft$}}}
\end{picture}$}
\end{figure}
Here the boxes stand for potential positions of $\bigwedge$ and $\bigvee$ and the arrows indicate which way the arrows may be moved. The picture on the left is for the transition from half integer degree to full integer degree and the right one for the opposite direction.

If the characteristic is non-zero, but larger than the degree $d$ of the partition algebra, then the moves on the projected abacus are just as pictured above. For arbitrary primes $p$, we may not only add or remove boxes of content $i$ and $\delta-i$, respectively, but also of content $i+kp$ and $\delta-i+kp$ for $k \in \Z$. On the abacus, this translates into moving an arrow into a column containing the label $i$ and $\delta-i$, respectively. Thus the arrows in the illustration above are to be read as moving from one column to another, but the row is arbitrary.

Using these rules we may characterize partitions with a common JM weight. To state the next proposition, we first need another definition:

\begin{defn}
\label{defn of tau}
Given an arrow diagram $a$, we define $\tau(a)$ to be the same as $a$ except that $\bigwedge$ is moved up by $1$, that is, if the initial position is $i$, then we move it to position $i+1$. Similarly, $\tau \inv $ is defined to move $\bigwedge$ down by $1$.  
\end{defn}
It is easy to see that $\tau$ maps the set of arrow diagrams for $P_r(\delta)$ to the set of arrow diagrams for $P_{r-1/2}(\delta-1)$. Thus, $\tau$ is a graphical realization of the Morita equivalence between $P_{r}(\delta)$ and $P_{r-1/2}(\delta-1)$, see Theorem \ref{thm morita equiv of half with full}.

\begin{prop}
\label{prop common weight over arbitr p means same no of arrows}
Suppose $\lambda,\theta \in \Lambda^+(d)$ have a common JM weight. Then for all $k$ the $k$th columns of
\begin{enumerate}[(a)]
\item $d(\lambda)$ and $d(\theta)$ (if $d \in \N$)
\item  $\tau d(\lambda)$ and $\tau d(\theta)$ (if $d \notin \N$)
\end{enumerate}
contain the same number of arrows (that is, counting $\bigwedge$ and $\bigvee$).
\end{prop}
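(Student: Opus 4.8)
The plan is to process the two tableaux witnessing the common JM weight one step at a time, tracking not the partitions themselves but only the number of arrows in each column of an auxiliary, level-dependent arrow diagram, and to show that this column-count data is transported by a rule that depends only on the JM eigenvalue of the step. Concretely, fix a $\lambda$-tableau $t$ and a $\theta$-tableau $u$ with $\wt(t)=\wt(u)$, so that by Theorem \ref{thm eigenvals of JM action} the two tableaux present the same eigenvalue $L_k(t)=L_k(u)$ at every level $k$.

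For a tableau $s$ and a level $k\in\{1/2,1,3/2,\ldots,d\}$ I would set $D^k(s)$ to be the arrow diagram $d(s^k)$ when $k\in\N$ and the twisted diagram $\tau\,d(s^k)$ when $k\notin\N$. Since $\tau$ moves only $\bigwedge$, in $D^k(s)$ the $\bigvee$'s sit at the contents $s^k_i-i$, while the single $\bigwedge$ sits at upper label $\delta-|s^k|$ (untwisted) or $\delta-|s^k|-1$ (twisted). I would then establish the following transport rule for the column counts (the number of $\bigvee$'s and $\bigwedge$'s in each column), which is precisely the content of Figure \ref{figure directions to add arrows}: passing from level $k-1/2$ to level $k$ with eigenvalue $i$ removes one arrow from column $i-1$ and adds one to column $i$ when $k\in\N$, and removes one arrow from column $\delta-i$ and adds one to column $\delta-i-1$ when $k\notin\N$.

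The crux, and the main obstacle, is to verify that this rule is insensitive to \emph{which} of the two possibilities of Theorem \ref{thm eigenvals of JM action} actually occurs, and here the alternating twist does the essential bookkeeping. For an integer step with eigenvalue $i$ either a box of content $i$ is added, so a $\bigvee$ slides from column $i-1$ to column $i$ while the removal of the twist is exactly cancelled by the increase of $|s^k|$ by one and $\bigwedge$ stays put; or the partition is kept and $\delta-|s^k|=i$, so no $\bigvee$ moves but the change of twist slides $\bigwedge$ from column $i-1$ to column $i$. In both cases the net change of column counts is identical. The half-integer steps are checked the same way, with removal of a box (moving a $\bigvee$) and keeping (moving $\bigwedge$) interchanging roles; matching the two sides of the twist so that the ``keep'' motion of $\bigwedge$ reproduces the ``add/remove'' motion of a $\bigvee$ is the only place a genuine computation is needed.

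Granting the transport rule the argument concludes at once. One has $D^{1/2}(t)=D^{1/2}(u)=\tau\,d(\emptyset)$, and since the eigenvalues agree at every level, induction on $k$ shows that $D^k(t)$ and $D^k(u)$ have equal column counts for all $k$; taking $k=d$ gives the claim, because $D^d(s)=d(s^d)$ when $d\in\N$ and $D^d(s)=\tau\,d(s^d)$ when $d\notin\N$. Finally, in characteristic $p>0$ the same proof applies verbatim once every label is read modulo $p$: adding a box of content $c$ still moves a $\bigvee$ from the column of residue $c-1$ to that of residue $c$, and $\tau$ still shifts $\bigwedge$ by one column, so the transport rule, and hence the conclusion, holds on the abacus with columns indexed by residues mod $p$.
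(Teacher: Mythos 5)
Your proposal is correct and follows essentially the same route as the paper's own proof: the paper also inducts along the tableaux (one half-step at a time, split into the cases $k\in\N$ and $k\notin\N$) and rests on exactly your key observation that the $\tau$-shift of $\bigwedge$ cancels the change of $|s^k|$, so that each step alters the column counts only in the two columns determined by the common JM eigenvalue. Your transport-rule bookkeeping, including the twisted position $\delta-|s^k|-1$ for $\bigwedge$ and the reduction mod $p$ for positive characteristic, matches the paper's argument in all essentials.
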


\begin{proof}
We will prove the result by induction on the degree of the partition algebra. Suppose the result is true for $d \in 1/2\N$. Let $u \in \UD(\lambda)$ and $t \in \UD(\theta)$ be the given tableau of the same weight. Let $\lambda'$ and $\theta'$ be the second last partitions in $u$ and $t$, respectively, and define $u'$ and $t'$ to be the tableaux obtained from $u$ and $t$ by omitting the last partition. Also, let $i$ be the last entry of $\wt(u)=\wt(t)$. Distinguish two cases:
\newline \textbf{Case 1:} $d \in \N$. In this case, we may either remove a box of content $\delta-i$ from the partitions in question or leave the partition unchanged if its size is $i$. Thus, to get from $d(\lambda')$ to $\tau d(\lambda)$, we first have to apply the operation in Figure \ref{figure directions to add arrows}(b) and then move the $\bigwedge$ to the right (that is, up) by one. We claim that in each case this amounts to moving an arrow from column $\delta-i$ to column $\delta-i-1$. For, if $\lambda'=\lambda$, then the only movement of arrows comes from applying $\tau$ which exactly moves the $\bigwedge$ from column $\delta-i$ to column $\delta-i-1$. If a box is removed from $\lambda'$ to obtain $\lambda$, then a $\bigvee$ moves from column $\delta-i$ to column $\delta-i-1$ but also a $\bigwedge$ moves down by $1$ (since the size of the partition changes). This, however, is precisely offset  by the application of $\tau$ so that overall only the $\bigvee$ moves as claimed. Notice that exactly the same argument applies to $d(\theta')$ and $d(\theta)$ so that $\tau d(\lambda)$ and $\tau d(\theta)$ have the same number of arrows in each column.
\newline \textbf{Case 2:} $d \notin \N$. In this case, we may either have $\lambda'=\lambda$ if the size of $\lambda'$ is $\delta-i$ or add a box of content $i$ to $\lambda'$ to get $\lambda$. Thus,  to get from $\tau d(\lambda')$ to $d(\lambda)$, we have to first apply $\tau\inv$  and then  the operations from Figure \ref{figure directions to add arrows}(a). We claim that in either case an arrow will be moved from column $i-1$ to $i$. If a box is added to $\lambda'$, this follows by exactly the same argument as in Case 1. If $\lambda'=\lambda$, then there must be a $\bigwedge$ in column $i$ of $d(\lambda')=d(\lambda)$ which means that the $\bigwedge$ was in column $i-1$ of $\tau d(\lambda')$, as claimed. Again, the same argument works for $\theta$ and $\theta'$ and we are done.
\end{proof}

In the special case when $\mu$ is a partition of $r$ and the characteristic of the field is $0$ or larger than $r$, we can even say more:

\begin{prop}
\label{prop common jm weight implies certain form}
Suppose $\lambda,\mu \in \Lambda^+$ with $\lambda \neq \mu$, $p=0$ or $p>r$ and $\mu$ is a partition of $r$. Then $\lambda$ and $\mu$ have a common JM weight if and only if the arrow diagrams $d(\lambda)$ and $d(\mu)$ are the same except for one configuration of $l$ neighbouring columns of the following form:\\
  $\begin{array}{llllll}(a) \ \mu \in \Lambda^+(r+1/2) \hspace*{0.1cm}&
\mu:  & \qc \vc \oc  \cdots \oc  \wc \oc 
(b) \ \mu \in \Lambda^+(r) \hspace*{0.1cm}&
\mu:  &   \vc \oc \cdots \oc \wc     \\ 
\ & \lambda:   & \underset{\textstyle \raisebox{-1.9mm}{$\bigwedge$}}{?} \oc  \oc \cdots \oc \oc \vc  & 
 \lambda:   & \wc \oc \cdots \oc \vc
\end{array}$ \newline where $(a)$ also encompasses the special case  $\begin{array}{ccccc}
\mu: & \qc \xc \oc  & \hspace*{-0.2cm} &
\lambda: & \underset{\textstyle \raisebox{-1.9mm}{$\bigwedge$}}{?}  \oc \vc 
\end{array}$ and $?$ is either equal to $\bigvee$ or $\bigcirc$.
\end{prop}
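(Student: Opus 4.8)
The plan is to prove both implications by exploiting the rigidity that the hypotheses force on $d(\mu)$. Since $\mu$ is a partition of $r$, its diagram $d(\mu)$ carries exactly one $\bigwedge$, sitting at position $|\mu|=r$, together with one $\bigvee$ for each (padded) part at the contents $\mu_i-i$; because $p=0$ or $p>r$ I may work on the single runner of the $\mu$-projection, on which every content occurring in a tableau of $\mu$ lies in a window of fewer than $p$ columns, so no wrap-around occurs and positions may be compared as honest integers. Throughout I would pass freely between the half-integer case $(a)$ and the full case $(b)$ by means of $\tau$ (Definition \ref{defn of tau}), which realises the Morita equivalence of Theorem \ref{thm morita equiv of half with full} on diagrams and converts the comparison of $d(\lambda),d(\mu)$ in case $(a)$ into the corresponding full-degree comparison; the crossing $\xc$ appearing in the special sub-case of $(a)$ is then precisely the column into which $\tau$ has moved $\bigwedge$ on top of a $\bigvee$.

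For the ``if'' direction I would argue constructively. Given that $d(\lambda)$ and $d(\mu)$ agree outside a single window of the prescribed shape, I build a $\mu$-tableau $u$ and a $\lambda$-tableau $t$ of equal weight by letting them follow identical steps on the common part of the two diagrams and branch only across the window. The window $\vc\,\oc\cdots\oc\,\wc$ for $\mu$ against $\wc\,\oc\cdots\oc\,\vc$ for $\lambda$ records exactly a $\bigvee$ and the $\bigwedge$ interchanged across an empty stretch; reading Figure \ref{figure directions to add arrows} column by column across this stretch, each position offers the binary choice ``move the box'' versus ``leave the partition fixed'', and by Theorem \ref{thm eigenvals of JM action} the two admissible choices carry the same JM eigenvalue precisely when the size equals the complementary content, i.e.\ when $c(\epsilon)=\delta-|t^{k}|$ or $\delta-c(\epsilon)=|t^{k}|$. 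Letting $t$ take the ``move'' option wherever $u$ takes ``stay'' and vice versa realises the swap while keeping $\wt(t)=\wt(u)$, so $\lambda$ and $\mu$ have a common JM weight.

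The substance is the ``only if'' direction. Suppose $\lambda$ and $\mu$ have a common JM weight. Proposition \ref{prop common weight over arbitr p means same no of arrows} already tells me that (after applying $\tau$ in case $(a)$) $d(\lambda)$ and $d(\mu)$ carry the same number of arrows in every column. Because $\mu$ is a partition of $r$, the arrows of $d(\mu)$ occupy $k+1$ distinct columns with the unique $\bigwedge$ lying strictly beyond every $\bigvee$; equal column counts then force the arrows of $d(\lambda)$ to occupy exactly the same columns, so the two diagrams differ only in which arrow sits in each occupied column. Since $\lambda\neq\mu$ the two $\bigwedge$'s lie in different columns, and I must show that the resulting rearrangement is a single $\bigvee$--$\bigwedge$ transposition across a contiguous block of otherwise empty columns, with all remaining arrows fixed. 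Here I would feed the finer output of Proposition \ref{prop common weight over arbitr p means same no of arrows} back in: its proof shows that, step for step, the shared weight tuple dictates the same adjacent-column transfer for the tableaux of $\lambda$ and of $\mu$, so the two diagrams remain column-count-identical at every intermediate level as well. Tracking the lone $\bigwedge$ through these transfers, and using that in $d(\mu)$ it is extremal whereas in $d(\lambda)$ it has moved inward, pins the discrepancy to the stretch of columns between the two $\bigwedge$-positions and shows that this stretch must be free of all other arrows, which is exactly the asserted window.

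I expect the main obstacle to be precisely this last localisation: ruling out that matching column counts could be achieved by several separated swaps, or by an arrow left inside the window, rather than by the single clean transposition of the statement. The cleanest route I foresee is induction on $r$ (equivalently on the degree), peeling off the final step of the shared tableau as in the proof of Proposition \ref{prop common weight over arbitr p means same no of arrows} and invoking the inductive hypothesis on the truncated diagrams; the genuine-partition constraint on $\mu$ (no $\bigvee$ can sit beyond its $\bigwedge$, i.e.\ no content exceeds $r-1$) together with $p>r$ is what forbids the unwanted configurations and forces the window into the stated normal form, including the boundary possibilities recorded by the symbol $?$.
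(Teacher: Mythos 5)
Your skeleton coincides with the paper's proof: treat case $(a)$ via $\tau$, prove the ``if'' direction by exhibiting tableaux, and prove the ``only if'' direction by induction on the degree, peeling off the last step of the shared tableaux exactly as in the proof of Proposition \ref{prop common weight over arbitr p means same no of arrows} and using that $\mu$, being a partition of $r$ with $p=0$ or $p>r$, forces its tableau to be rigid. Your reduction via equal column counts is sound, and in fact gives more than you credit it with: since each diagram carries exactly one $\bigwedge$ and at most one $\bigvee$ per column, equal counts \emph{already} force the two diagrams to agree outside the two $\bigwedge$-columns and to differ there by a single $\bigvee$--$\bigwedge$ exchange, so ``several separated swaps'' cannot occur and the only genuine issue is that the columns strictly between the exchanged pair are empty.

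Two of the steps you actually spell out, however, fail. First, the localisation you attempt before retreating to induction rests on the claim that in $d(\mu)$ the $\bigwedge$ lies strictly beyond every $\bigvee$. This is false: the $\bigwedge$ sits at below-line label $|\mu|=r$, i.e.\ in the column with above-line label $\delta-r$, so its position relative to the $\bigvee$s (whose labels are the contents $\mu_i - i \le r-1$) depends on $\delta$ and not on $\mu$ alone. For instance $\mu=(3,1)$, $\delta=4$ puts the $\bigwedge$ in column $0$, strictly between the $\bigvee$s in columns $2$ and $-1$ --- and this is precisely a configuration-$(b)$ instance, paired with $\lambda=(1,1)$. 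So extremality cannot pin down the window, and the whole burden falls on the induction, whose four-case analysis ($\mu'=\lambda'$ or not, $d\in\N$ or not) you assert (``forbids the unwanted configurations'') but never carry out; that case analysis is where essentially all of the paper's work lies. Second, your ``if''-direction rule --- $t$ moves exactly where $u$ stays and vice versa --- breaks down as soon as the window has nonempty interior. In the same example ($d=r=4$, window $\vc\,\oc\,\wc$ against $\wc\,\oc\,\vc$), when $u$ adds the content-$1$ box with eigenvalue $1$, the complementary choice for $t$ (staying) gives eigenvalue $\delta-|t^k|=2$ by Theorem \ref{thm eigenvals of JM action}, so $t$ is forced to add that box as well and to remove it again at the next half-integer step: the weight-matching $\lambda$-tableau oscillates rather than complements $u$. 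The paper's backward construction through its four cases produces exactly these oscillating tableaux; your rule is only valid for the minimal windows $\vc\,\wc$ and $\qc\,\xc\,\oc$.
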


\begin{proof} 
We will start with the case when $\lambda$ and $\mu$ have a common JM weight so that we have to show that they are of the required form. For $r=1$ this is clearly true. So suppose it is true for degree $d \in 1/2 \N$. We will show it is true for $d+1/2$ as well. Let $u \in \UD(\mu)$ and $t \in \UD(\lambda)$ be the tableaux of the same weight. As before, denote by $\mu'$ and $\lambda'$ the second last partition in $u$ and $t$, respectively, and suppose the last entry of the weight of $u$ and $t$ is $i$. By induction hypothesis, $\lambda'$ and $\mu'$ have the claimed form.  Distinguish the following four cases:
\newline \textbf{Case 1: }\textit{$\mu'=\lambda'$ and $d \in \N$.} In this case, we move from the full integer degree to the half integer degree case which corresponds to Figure \ref{figure directions to add arrows}(b). Now in the $\mu$-tableau $u$ we may only add boxes or leave the partition unchanged when moving from one step of $u$ to the next. In this case, we only have the choice $\mu'=\mu$. In particular, there is a $\bigwedge$ at position $i$ of $\mu'=\lambda'$. If $\lambda=\lambda'$ as well, then we are done so assume this is not the case. Thus, we must remove a box of content $\delta-i$ from $\lambda'$ and since $\lambda'=\mu'$ it follows that $\mu'$ also has a removable box of content $\delta-i$. Thus we can depict the situation as follows:$$\begin{array}{cccc}
u: & d(\mu')=\cdots  \qc \raisebox{3pt}{$\overset{\delta-i}{\xc} \oc$} \cdots & - & d(\mu)=\cdots \qc \raisebox{3pt}{$\overset{\delta-i}{\xc} \oc$}\cdots \\
t: & d(\lambda')= \cdots  \qc \raisebox{3pt}{$\overset{\delta-i}{\xc} \oc$}  \cdots & - & d(\lambda)=\cdots \underset{\textstyle \raisebox{-1.9mm}{$\bigwedge$}}{?}  \raisebox{3pt}{$\overset{\delta-i}{\oc} \vc$}\cdots
\end{array}.$$ All other arrows are unchanged and so $\mu$ and $\lambda$ indeed have the claimed form. 
\newline \textbf{Case 2: }\textit{$\mu'=\lambda'$ and $d \notin \N$.} Using a similar reasoning as in Case 1, we must have that a box of content $i$ is added to $\mu'$ and we only have to consider the case $\lambda=\lambda'$ which implies that there is a $\bigwedge$ at position $\delta-i$ in both $\lambda'$ and $\mu'$. Therefore, the situation may be depicted as follows:
$$\begin{array}{ccccccccc}
u: & \cdots \raisebox{3pt}{$\overset{i}{\wc} \vc $} \cdots& - & \cdots\raisebox{3pt}{$\overset{i}{\vc} \wc $} \cdots & \ &
t: & \cdots \raisebox{3pt}{$\overset{i}{\wc} \vc $} \cdots & - & \cdots \raisebox{3pt}{$\overset{i}{\wc} \vc $}\cdots
\end{array}$$ which is of the required form.
\newline \textbf{Case 3: }\textit{$\mu'\neq \lambda'$ and $d \in \N$.} Similar as in Case 1, we must have $\mu'=\mu$. However, we have to consider both the case when $\lambda=\lambda'$ and the case when a box is removed at the last step of $t$. But if $\lambda=\lambda'$, then $|\lambda|=\delta-i=|\mu|$ which forces $\lambda=\mu$ since $p>r$ and $\mu$ is a partition of $r$. So we can assume that a box is removed at the last step of $t$. By induction hypothesis $\mu'$ and $\lambda'$ must be the same except a column configuration as in $(b)$. Since $\mu'=\mu$, the only box which we may remove from $\lambda'$ is the $\bigvee$ in the right-most drawn column which is in the same column as the $\bigwedge$ in $\mu'$. In particular,  $\lambda$ and $\mu$ will be precisely configured as in $(a)$ by Proposition \ref{prop common weight over arbitr p means same no of arrows} which forces the right-most drawn column of $\mu$ to contain no arrows.  
\newline \textbf{Case 4: }\textit{$\mu'\neq \lambda'$ and $d \notin \N$.} In this case, we must add a box to $\mu'$ to obtain $\mu$. Start with the case, when $\lambda=\lambda'$. Because $\lambda'$ and $\mu'$ are configured as in $(a)$, and we have to move arrows as in Figure \ref{figure directions to add arrows}(a), there is only one choice to move an arrow in $\mu'$ to obtain $\mu$: We have to move the $\bigvee$  of $\mu'$, which is in the column to the right of the column containing the $\bigwedge$ in $\lambda'$,  up by $1$, that is to the left. This does, in particular, force $?$ in $\mu'$ (and hence in $\lambda'=\lambda$ by Proposition \ref{prop common weight over arbitr p means same no of arrows}) to be $\bigcirc$ so that we precisely obtain a configuration as in $(b)$. If  a box is added to $\lambda'$ to obtain $\lambda$, then the corresponding arrows of $\lambda'$ and $\mu'$ have the same label (i.e.~content) since $p>r$ and must hence lie outside the configuration drawn in $(a)$. This will leave the configuration of arrows outside the columns drawn in $(a)$ the same for both $\lambda$ and $\mu$ and within the column configuration both $\bigwedge$ will move by one to the right. This yields precisely a configuration as in $(b)$.

Notice that all four cases work exactly the same for the $\mu$-projected abacus in characteristic $p>r$. This follows because the projection was chosen precisely in such a way that the active area, that is the set of contents of the boxes of $\mu$, is drawn and there is one additional "space" on the right of the diagram. Thus, the operations depicted in Cases 1-4 never leave the abacus projection. 

It remains to show the converse of the statement. However, in this case we can simply work backwards in each of the cases above which defines the required tableaux of the same weight.
\end{proof}

The proposition motivates the following definition:
\begin{defn}
\label{defn sigma action of arrow diag}
Define an action $\sigma$ on a characteristic $0$ diagram as follows: If $\bigwedge$ is in column $j$ and $i \leq j$ is maximal such that it contains a $\bigvee$, then move $\bigwedge$ to column $i$ and the $\bigvee$ in column $i$ to column $j$. Thus, $\sigma$ simply changes a neighbouring $\bigvee \bigwedge$ pair to a $\bigwedge \bigvee$ pair.
\end{defn}

Thus condition $(b)$ in the proposition could just be reformulated as $d(\lambda)=\sigma d(\mu)$ and condition $(a)$ as $d(\lambda)=\tau \inv \sigma \tau d(\mu)$.

An important corollary of the proof of Proposition \ref{prop common jm weight implies certain form} is that decomposition multiplicities must always be either $0$ or $1$ if $p>r$:
\begin{cor} \label{cor dec multiplicities are 0 or 1}
 Suppose $p>r$ and let $\lambda,\mu \in \Lambda^+(d)$ with $\mu$ a partition of $r$. Then:
\begin{enumerate}[(a)]
\item If $\lambda$ and $\mu$ have a common JM weight, then for every $\mu$-tableau $u$ there is at most one $\lambda$-tableau $t$ with $\wt(u)=\wt(t)$.
\item $[\Delta(\lambda):L(\mu)] \in \{ 0,1\}$. 
\end{enumerate}
\end{cor}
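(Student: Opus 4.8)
The plan is to deduce part (b) from part (a), and to prove part (a) by a downward induction on the degree $d$ that refines the induction in the proof of Proposition~\ref{prop common jm weight implies certain form}. The crux will be the claim that, given a $\mu$-tableau $u$, the \emph{last} step of any $\lambda$-tableau $t$ with $\wt(t)=\wt(u)$ is already forced; once this is established, deleting the last step and invoking the induction hypothesis gives uniqueness of $t$. A structural remark keeps the induction inside the right class: since $\mu$ is a partition of its full size $n=|\mu|$ with $d\in\{n,n+1/2\}$, every $\mu$-tableau $u$ must add a box at each integer step and leave the partition unchanged at each half-integer step, for otherwise size $n$ could not be reached at the end. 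Hence $u^{d-1/2}$ is again a partition of its own full size for the degree $d-1/2$, so truncating $u$ stays in the same regime and the hypothesis will apply.

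For the forcing of the last step I would use Theorem~\ref{thm eigenvals of JM action}: once $\lambda=t^{d}$ and the last weight entry $i$ are fixed, there are at most two candidates for $t^{d-1/2}$, namely a ``stay'' step ($t^{d-1/2}=\lambda$) and a ``move'' step which removes (if $d\in\N$) or adds (if $d\notin\N$) the \emph{unique} box of the content prescribed by $i$ — unique because removable, and addable, boxes of a partition have pairwise distinct contents. Thus the whole content of part (a) is to exclude the spurious candidate, and the hypothesis $p>r$ is what makes the relevant contents lie in a window shorter than $p$ on the $\mu$-projected abacus, so that positions are genuinely linearly ordered and the arrow arguments below are wrap-around free.

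I would split the exclusion into cases. When $\lambda=\mu$, the spurious candidate forces $|t^{d-1/2}|$ to exceed the size of the truncation $\mu'=u^{d-1/2}$ (a ``stay'' gives size $|\mu|>|\mu'|=|\mu|-1$ when $d\in\N$, a ``move'' gives size $|\mu|+1>|\mu'|=|\mu|$ when $d\notin\N$); but $t$ and $u$ restrict to tableaux of equal weight, so $t^{d-1/2}$ and the full-size $\mu'$ would have a common JM weight, violating the size inequality $|\,\cdot\,|\le|\mu'|$ built into Proposition~\ref{prop common jm weight implies certain form}. When $\lambda\neq\mu$ and $d\notin\N$, the ``stay'' step is outright incompatible with $i=|\mu|$, since it would require $|\lambda|=|\mu|$ against $|\lambda|<|\mu|$ (again by that proposition). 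The remaining case, $\lambda\neq\mu$ with $d\in\N$, is the one I expect to be the main obstacle, because there a ``stay'' and a box-removal are \emph{a priori} both compatible with $i$; they are simultaneously valid exactly when $d(\lambda)$ carries both a $\bigvee$ and a $\bigwedge$ in the column at position $\delta-|\lambda|$ singled out by $i$. I would resolve this using configuration $(b)$ of Proposition~\ref{prop common jm weight implies certain form}: there the $\bigwedge$ of $d(\lambda)$ occupies precisely the position vacated by the displaced $\bigvee$ of $d(\mu)$, so no $\bigvee$ of $d(\lambda)$ sits in that column and the two steps cannot both be valid. This forces $t^{d-1/2}$, and the induction closes (the base case $d\le 1$ being trivial).

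For part (b) I would pass to a weight-space count in the cellular Jucys--Murphy framework. Choosing a $\mu$-tableau $u$ whose seminormal weight vector has nonzero image in $\lmu$ and setting $\mathbf w=\wt(u)$, the triangular action of the Jucys--Murphy elements on the cellular basis of $\dla$ (as in \cite{HalversonRam}, \cite{Enyang13}, in the setting of \cite{Mathaspaper}) identifies $\dim\dla_{\mathbf w}$ with $\#\{t\in\UD(\lambda):\wt(t)=\mathbf w\}$, which is at most $1$ by part (a). Reading this weight space off along a composition series gives $[\dla:\lmu]\cdot\dim\lmu_{\mathbf w}\le\dim\dla_{\mathbf w}\le 1$, and since $\dim\lmu_{\mathbf w}\ge1$ by the choice of $u$ this forces $[\dla:\lmu]\le1$. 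The only delicate points here are the identification of the weight-space dimension with the tableau count and the existence of such a $u$, both of which are standard in this framework.
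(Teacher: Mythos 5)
Your proposal is correct and is essentially the paper's own argument: part (a) rests on exactly the same observation that the moves of the $\lambda$-tableau $t$ are forced at every step --- the paper extracts this directly from the case analysis inside the proof of Proposition \ref{prop common jm weight implies certain form}, while you re-derive it by an explicit induction from that proposition's statement --- and part (b) is the same count of generalized JM weight-space dimensions along a composition series, with the identification of those dimensions with tableau counts being the same external input (the paper cites the proof of Corollary 3.5 of \cite{JMpaper} for it). There is no gap; the difference from the paper is purely expository, your version being a self-contained unpacking of the paper's one-line pointer.
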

\begin{proof}
The first part follows since in the proof of Proposition \ref{prop common jm weight implies certain form}, there were never two choices to move arrows in $t$. This immediately implies the second part since if  $[\Delta(\lambda):L(\mu)] >1$, then $\Delta(\lambda)$ contains at least two copies of $L(\mu)=\Delta(\mu)$. Thus, the generalized eigenspace corresponding to $\wt(t)$ contains at least two elements which means that for every $\mu$-tableau $u$, there would be at least two different $\lambda$-tableau of the same weight, see the proof of Corollary 3.5 in \cite{JMpaper}. 
\end{proof}

\section{Refined induction and restriction}
We will introduce and study refined induction and restriction functors which will be needed for an inductive proof of Theorem \ref{thm determination of dec nos}.

\begin{defn}
 For a $P_d(\delta)$-module $M$ and $i \in \Z$, define $\iind_d M$ to be the projection onto the generalized eigenspace of the action of the JM-element $L_{d+1/2}$ on $\ind_d M$ with eigenvalue $i$. Similarly, $\ires_{d+1/2} M$ is the generalized eigenspace of the action of the JM-element $L_{d}$ on $\res_{d+1/2} M$ with eigenvalue $i$. We usually omit $d$ if it is clear from the context.
\end{defn}

These maps have  similar properties as the corresponding functors for symmetric groups:

\begin{prop}
The maps \begin{align*}
\iind: P_d(\delta)\text{--mod} \to P_{d+1/2}(\delta)\text{--mod}  \hspace*{0.8cm} \ires: P_{d+1/2}(\delta)\textrm{--mod} \to P_{d}(\delta)\text{--mod}
\end{align*} are functorial and $\iind$ is a left-adjoint to $\ires$.

\end{prop}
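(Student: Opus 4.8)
The plan is to reduce the statement to the ordinary induction/restriction adjunction together with the centrality of the total Jucys--Murphy element (Theorem \ref{thm sum of JMs central}), and then to check that passing to generalized eigenspaces is compatible with both the functor structure and the adjunction. First I would recall that $\ind_d=P_{d+1/2}(\delta)\otimes_{P_d(\delta)}(-)$ is left adjoint to $\res_{d+1/2}$ by the tensor--hom adjunction, with unit $\eta_M\colon M\to\res_{d+1/2}\ind_d M$, $m\mapsto 1\otimes m$. Writing $z_e=\sum_{k\le e}L_k$ for $e\in\{d,d+1/2\}$, Theorem \ref{thm sum of JMs central} gives $z_d\in Z(P_d(\delta))$ and $z_{d+1/2}\in Z(P_{d+1/2}(\delta))$, so that $L_{d+1/2}=z_{d+1/2}-z_d$ commutes with $P_d(\delta)$ (both summands do). Since $\iind$ and $\ires$ are obtained from $\ind_d$ and $\res_{d+1/2}$ by projecting onto the generalized $i$-eigenspace of an operator built from $L_{d+1/2}$, the whole proposition reduces to showing that these projections are natural and are interchanged under the adjunction isomorphism $\Phi\colon\Hom_{P_{d+1/2}(\delta)}(\ind_d M,N)\xrightarrow{\sim}\Hom_{P_d(\delta)}(M,\res_{d+1/2}N)$.

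For $\ires$ this is straightforward: because $L_{d+1/2}$ commutes with $P_d(\delta)$, its action on $\res_{d+1/2}N$ is an endomorphism of $P_d(\delta)$-modules, natural in $N$, and the projection onto its generalized $i$-eigenspace is a natural idempotent (a polynomial in $L_{d+1/2}$), so $\ires$ is an additive functor. For $\iind$ the situation is more delicate, and this is where I expect the main obstacle to lie: $L_{d+1/2}$ is \emph{not} central in $P_{d+1/2}(\delta)$, so its naive action on $\ind_d M$ need not preserve $P_{d+1/2}(\delta)$-submodules. The remedy is to realize the eigenvalue grading of $\ind_d M$ by the operator $\Xi_M$ on $\ind_d M=P_{d+1/2}(\delta)\otimes_{P_d(\delta)}M$ given by left multiplication by the central element $z_{d+1/2}$ minus the endomorphism $p\otimes m\mapsto p\otimes z_d m$ induced by the $z_d$-action on the inducing factor. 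Left multiplication by a central element and the action on the $M$-factor both commute with the left $P_{d+1/2}(\delta)$-action, so $\Xi_M$ is a natural $P_{d+1/2}(\delta)$-endomorphism of $\ind_d M$; hence its generalized $i$-eigenspace is a genuine direct summand, which I take to be $\iind M$. One then has to verify, decomposing $M$ into generalized $z_d$-eigenspaces and invoking the explicit eigenvalues of Theorem \ref{thm eigenvals of JM action} in the semisimple case, that $\Xi_M$ reproduces precisely the intended ``$L_{d+1/2}$-eigenvalue $i$'' grading; this reconciliation of the non-central naive action with the genuine endomorphism $\Xi_M$ is the technical heart of the argument.

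For the adjunction I would show that $\Xi$ on $\ind_d$ and the $L_{d+1/2}$-action on $\res_{d+1/2}$ are mates. Write $\rho_N$ for the endomorphism of $\res_{d+1/2}N$ given by the action of $L_{d+1/2}$. The key observation is that $\Xi$ and $\rho$ already agree on the image of the unit: for $m\in M$,
\[
\Xi_M(1\otimes m)=z_{d+1/2}\otimes m-1\otimes z_d m=z_{d+1/2}\otimes m-z_d\otimes m=L_{d+1/2}\cdot(1\otimes m).
\]
Together with the naturality of $\rho$ as an endomorphism of the functor $\res_{d+1/2}$, this gives $\Phi(f\circ\Xi_M)=\rho_N\circ\Phi(f)$ for every $f\colon\ind_d M\to N$; that is, $\Phi$ intertwines precomposition by $\Xi_M$ with postcomposition by $\rho_N$, and hence precomposition by $\pi_i(\Xi_M)$ with postcomposition by $\pi_i(\rho_N)$ for the idempotent polynomial $\pi_i$ cutting out the generalized $i$-eigenspace. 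Therefore $\Phi$ carries $\{f:f\circ\pi_i(\Xi_M)=f\}=\Hom_{P_{d+1/2}(\delta)}(\iind M,N)$ isomorphically onto $\{h:\pi_i(\rho_N)\circ h=h\}=\Hom_{P_d(\delta)}(M,\ires N)$, naturally in both variables, which is precisely the adjunction $\iind\dashv\ires$.

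To summarize the difficulty: once the ordinary adjunction is in place, the functoriality of $\ires$ and the whole adjunction drop out formally from the centrality of $z_d$ and $z_{d+1/2}$ and the one-line unit computation above; the genuinely delicate point is the functoriality of $\iind$, namely replacing the non-module-endomorphism ``action of $L_{d+1/2}$'' by the honest $P_{d+1/2}(\delta)$-endomorphism $\Xi_M$ and checking that it recovers the intended $i$-grading.
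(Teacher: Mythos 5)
Your overall strategy is the one the paper intends: its own proof simply points to 6.4 of \cite{Mathasbook} and remarks that all one needs is the centrality of the summed Jucys--Murphy elements, and you carry that argument out explicitly. Your treatment of $\ires$ is correct (note you silently read the paper's ``$L_d$'' in the definition of $\ires_{d+1/2}$ as $L_{d+1/2}$, which is surely what is meant, since $L_d$ does not even commute with $P_d(\delta)$), the endomorphism $\Xi_M$ is the right construction, and the adjunction argument via $\Xi_M(1\otimes m)=L_{d+1/2}\cdot(1\otimes m)$ and the intertwining $\Phi(f\circ\Xi_M)=\rho_N\circ\Phi(f)$ is complete and correct, up to the harmless point that one should choose a single polynomial $\pi_i$ serving both operators, which exists because both have finite spectrum.

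The genuine problem is the step you defer as ``the technical heart'': verifying that the generalized eigenspaces of $\Xi_M$ coincide with those of left multiplication by $L_{d+1/2}$ on $\ind_d M$. This reconciliation is not delicate but impossible; the two operators genuinely differ, already in the semisimple case. If $M=\Delta_d(\mu)$ is simple, then $\Xi_M$ acts on each simple summand $\Delta_{d+1/2}(\lambda)$ of $\ind_d M$ by a single scalar (the scalar of $z_{d+1/2}$ on $\Delta_{d+1/2}(\lambda)$ minus that of $z_d$ on $M$), whereas by Theorem \ref{thm eigenvals of JM action} the element $L_{d+1/2}$ acts on the canonical basis of $\Delta_{d+1/2}(\lambda)$ with eigenvalue depending on the last step of the tableau, hence in general with several distinct eigenvalues on that one simple summand. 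Its generalized eigenspaces therefore slice the simple summands into proper nonzero subspaces, so they are not $P_{d+1/2}(\delta)$-submodules and cannot agree with the eigenspaces of $\Xi_M$. (The model case already shows this: for $S_2\subset S_3$, inducing the trivial module gives the two-dimensional Specht module $S^{(2,1)}$ as a summand, on which the top Jucys--Murphy element $(13)+(23)$ has the two eigenvalues $\pm 1$, so its eigenspaces are one-dimensional non-submodules.) The resolution is not to prove the reconciliation but to discard it: $\Xi_M$ --- equivalently, the projection determined by the central characters of $z_d$ and $z_{d+1/2}$ --- must be \emph{taken as the definition} of $\iind$. That is what the citation of \cite{Mathasbook} amounts to, and it is the reading forced by Theorem \ref{thm ires of cells}$(c)$,$(d)$, whose eigenvalue bookkeeping matches $\Xi_M$ and not the naive action. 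With that reading your proof needs no further step: the only compatibility between $\Xi_M$ and $L_{d+1/2}$ that holds, and the only one the adjunction uses, is their agreement on the generating subspace $1\otimes M$, which you already established.
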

The proof is exactly as in the symmetric group case, see e.g.~6.4 in \cite{Mathasbook}. All that is required, is that the element $z=L_{1/2} + \ldots L_d$, that is the sum of all JM-elements, is central, see Theorem \ref{thm sum of JMs central}.

For cell modules, we can deduce the following induction/restriction rules from Theorem \ref{thm restriction of cells} by projecting onto the corresponding JM weight space:

\begin{theorem} \label{thm ires of cells} Suppose $\lambda \in \Lambda^+(d)$ with $d \in \{r,r+1/2 \}$ and that the characteristic $p$ of the ground field is zero or larger than $d$. Then there are exact sequences as follows
\begin{enumerate}[(a)]
\item $\displaystyle 0 \longrightarrow   \Delta(\lambda-\epsilon)  \longrightarrow \ires_r  \Delta(\lambda) \longrightarrow \Delta(\lambda) \longrightarrow 0,$ where $\epsilon$ is a removable box of content $i$ and the term on the right is zero unless $|\lambda|=\delta-i$.
\item  $\displaystyle 0 \longrightarrow\Delta(\lambda)\longrightarrow \ires_{r+1/2} \Delta(\lambda) \longrightarrow \Delta(\lambda+\epsilon) \longrightarrow 0, $ where $\epsilon$ is an addable box of content $\delta - i$ and  the term on the left is zero unless  $|\lambda|=i$. 
\item $\displaystyle 0 \longrightarrow   \Delta(\lambda-\epsilon)\longrightarrow \iind_r \Delta(\lambda) \longrightarrow \Delta(\lambda) \longrightarrow 0,$ where $\epsilon$ is a removable box of content $\delta-i$ and the term on the right is zero unless  $|\lambda|=i$.
\item $ \displaystyle 0 \longrightarrow\Delta(\lambda)\longrightarrow \iind_{r-1/2} \Delta(\lambda) \longrightarrow  \Delta(\lambda+\epsilon) \longrightarrow 0, $  where $\epsilon$ is an addable box of content $ i$ and the term on the left is zero unless  $|\lambda|=\delta-i$. assuming
\end{enumerate}
In each case, we set $\Delta(\lambda\pm \epsilon)=0$ if no addable/removable box $\epsilon$ with the required content exists.
\end{theorem}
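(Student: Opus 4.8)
The plan is to derive $(a)$--$(d)$ by applying an exact projection functor to the four short exact sequences of cell modules in Theorem~\ref{thm restriction of cells}, matching $(e)$ to $(a)$, $(f)$ to $(b)$, $(g)$ to $(c)$ and $(h)$ to $(d)$. The functors $\ires$ and $\iind$ project onto a generalized eigenspace, and the crucial structural input is that such a projection is exact and preserves the class of cell modules. I would establish this by reducing it to a projection by a central idempotent: by Theorem~\ref{thm sum of JMs central} the total sum of Jucys--Murphy elements $z$ is central, so its generalized eigenspace decomposition is functorial and each summand is cut out by a central idempotent; multiplication by a central idempotent is exact and sends module homomorphisms to module homomorphisms, hence applying it to any of $(e)$--$(h)$ again yields a short exact sequence.

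The heart of the argument is then to see that, in the allowed range $p=0$ or $p>d$, the relevant operator acts by a single scalar on every cell-module layer $\Delta(\nu)$ appearing in $(e)$--$(h)$, so that the projection retains or annihilates each layer as a whole. For the restriction cases $(a)$ and $(b)$ this holds for the governing Jucys--Murphy element directly: by Theorem~\ref{thm eigenvals of JM action} the eigenvalue contributed by the final step of a tableau depends only on the transition between the last two partitions, which is constant across the layer $\Delta(\nu)$, and this scalar is exactly the value recorded by the central element $z$ of the smaller algebra. For the induction cases $(c)$ and $(d)$ the new Jucys--Murphy element does not act semisimply on an induced cell module, so here I would instead project using the central element of the larger algebra, or equivalently exploit the identity $\ind=\res\circ G$ already used in the proof of Theorem~\ref{thm restriction of cells} to reduce induction to the restriction situation; either way the projection is again by a central idempotent and respects the cell filtration.

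With exactness in hand, the content and size conditions come out of a direct eigenvalue computation with Theorem~\ref{thm eigenvals of JM action}. Reading off the last-step contribution shows that a layer $\Delta(\lambda\mp\epsilon)$ lies in the $i$-eigenspace exactly when $\epsilon$ has the content prescribed in the statement ($c(\epsilon)=i$, $\delta-c(\epsilon)=i$, and so on), while the solitary layer $\Delta(\lambda)$ lies there exactly when $|\lambda|$ satisfies the stated size condition (such as $|\lambda|=\delta-i$ in $(a)$ and $|\lambda|=i$ in $(b)$). Since a partition has at most one addable and at most one removable box of any given content, each direct sum $\bigoplus_{\epsilon}\Delta(\lambda\pm\epsilon)$ in Theorem~\ref{thm restriction of cells} collapses under the projection to a single cell module or to zero, and the convention $\Delta(\lambda\pm\epsilon)=0$ absorbs the degenerate cases; the four projected sequences are then precisely $(a)$--$(d)$.

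I expect the main obstacle to be legitimising all of this in the non-semisimple regime, since $p=0$ or $p>d$ does not force $P_d(\delta)$ to be semisimple and the Gelfand--Zetlin basis of simultaneous eigenvectors underpinning Theorem~\ref{thm eigenvals of JM action} is then unavailable. The way around it is to carry out every eigenvalue computation generically, where the scalar by which a central element acts on a cell module is a polynomial in $\delta$ with integer coefficients, and then to specialise: centrality of $z$ holds over an arbitrary field by Theorem~\ref{thm sum of JMs central}, and the polynomial identity for the central scalar transfers the generic computation to every parameter in range. A secondary difficulty, flagged above, is the failure of the single Jucys--Murphy element to act semisimply in the induction cases, which forces the detour through the central element of the larger algebra or through $\ind=\res\circ G$.
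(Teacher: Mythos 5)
Your proposal is correct and is essentially the paper's own argument: the paper proves this theorem with nothing more than the remark that it follows from Theorem \ref{thm restriction of cells} by projecting onto the corresponding JM weight space, which is precisely your plan of applying the generalized-eigenspace projection to the sequences $(e)$--$(h)$. The details you supply --- exactness and functoriality of the projection via the central element of Theorem \ref{thm sum of JMs central} (the paper delegates this to the symmetric group argument in Mathas), the eigenvalue bookkeeping from Theorem \ref{thm eigenvals of JM action}, the detour through central elements or $\ind=\res\circ G$ for the induction cases, and the generic-to-special argument covering the non-semisimple regime --- are exactly the ones the paper leaves implicit.
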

\begin{rk}
Notice that since $p>r$ or $p=0$, there is always at most one addable and at most one removable box of a given content.
\end{rk}
Figure \ref{figure directions to move for ires and iind} summarizes the theorem in a similar way as Figure \ref{figure directions to add arrows}.
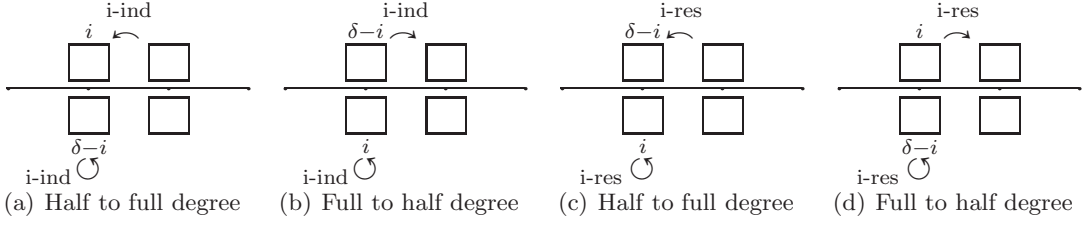
\begin{figure}
\caption{Possible ways to move arrows when $i$-restricting/$i$-removing.}
\label{figure directions to move for ires and iind}
\subfigure[Half to full degree]{$
\begin{picture}(100,70)
\put(44,60){\makebox{$\scriptstyle{\iind}$}}
\put(14,-3){\makebox{$\scriptstyle{\iind}$}}
\put(30,20){\makebox{$\underset{\delta-i}{\boxed{\phantom{\times}}}$}}
\put(30,20){\makebox{$\boxed{\phantom{\times}}$}}
\put(30,40){\makebox{$\overset{i}{\boxed{\phantom{\times}}}$}}
\put(60,20){\makebox{$\boxed{\phantom{\times}}$}}
\put(60,40){\makebox{$\boxed{\phantom{\times}}$}}
\put(6,30){\makebox{$\cdot$}}
\put(6,30){\makebox{$\cdot$}}
\put(36,30){\makebox{$\cdot$}}
\put(36,30){\makebox{$\cdot$}}
\put(7,33){\line(1,0){30}}
\put(66,30){\makebox{$\cdot$}}
\put(66,30){\makebox{$\cdot$}}
\put(37,33){\line(1,0){30}}
\put(96,30){\makebox{$\cdot$}}
\put(96,30){\makebox{$\cdot$}}
\put(67,33){\line(1,0){30}}
\put(46,50){\makebox{$\curvearrowleft$}}
\put(33,0){\makebox{\rotatebox[origin=c]{0}{$\circlearrowleft$}}}
\end{picture} $}
\subfigure[Full to half degree]{$
\begin{picture}(100,70)
\put(44,60){\makebox{$\scriptstyle{\iind}$}}
\put(14,-3){\makebox{$\scriptstyle{\iind}$}}
\put(30,40){\makebox{$\overset{\delta-i}{\boxed{\phantom{\times}}}$}}
\put(30,20){\makebox{$\boxed{\phantom{\times}}$}}
\put(30,40){\makebox{$\boxed{\phantom{\times}}$}}
\put(60,20){\makebox{$\boxed{\phantom{\times}}$}}
\put(30,20){\makebox{$\underset{i}{\boxed{\phantom{\times}}}$}}
\put(60,40){\makebox{$\boxed{\phantom{\times}}$}}
\put(6,30){\makebox{$\cdot$}}
\put(6,30){\makebox{$\cdot$}}
\put(36,30){\makebox{$\cdot$}}
\put(36,30){\makebox{$\cdot$}}
\put(7,33){\line(1,0){30}}
\put(66,30){\makebox{$\cdot$}}
\put(66,30){\makebox{$\cdot$}}
\put(37,33){\line(1,0){30}}
\put(96,30){\makebox{$\cdot$}}
\put(96,30){\makebox{$\cdot$}}
\put(67,33){\line(1,0){30}}
\put(46,50){\makebox{$\curvearrowright$}}
\put(33,0){\makebox{\rotatebox[origin=c]{0}{$\circlearrowleft$}}}
\end{picture}$}
\subfigure[Half to full degree]{$
\begin{picture}(100,70)
\put(44,60){\makebox{$\scriptstyle{\ires}$}}
\put(14,-3){\makebox{$\scriptstyle{\ires}$}}
\put(30,20){\makebox{$\underset{i}{\boxed{\phantom{\times}}}$}}
\put(30,20){\makebox{$\boxed{\phantom{\times}}$}}
\put(30,40){\makebox{$\overset{\delta-i}{\boxed{\phantom{\times}}}$}}
\put(60,20){\makebox{$\boxed{\phantom{\times}}$}}
\put(60,40){\makebox{$\boxed{\phantom{\times}}$}}
\put(6,30){\makebox{$\cdot$}}
\put(6,30){\makebox{$\cdot$}}
\put(36,30){\makebox{$\cdot$}}
\put(36,30){\makebox{$\cdot$}}
\put(7,33){\line(1,0){30}}
\put(66,30){\makebox{$\cdot$}}
\put(66,30){\makebox{$\cdot$}}
\put(37,33){\line(1,0){30}}
\put(96,30){\makebox{$\cdot$}}
\put(96,30){\makebox{$\cdot$}}
\put(67,33){\line(1,0){30}}
\put(46,50){\makebox{$\curvearrowleft$}}
\put(33,0){\makebox{\rotatebox[origin=c]{0}{$\circlearrowleft$}}}
\end{picture} $}
\subfigure[Full to half degree]{$
\begin{picture}(100,70)
\put(44,60){\makebox{$\scriptstyle{\ires}$}}
\put(14,-3){\makebox{$\scriptstyle{\ires}$}}
\put(30,40){\makebox{$\overset{i}{\boxed{\phantom{\times}}}$}}
\put(30,20){\makebox{$\boxed{\phantom{\times}}$}}
\put(60,20){\makebox{$\boxed{\phantom{\times}}$}}
\put(30,40){\makebox{$\boxed{\phantom{\times}}$}}
\put(30,20){\makebox{$\underset{\delta-i}{\boxed{\phantom{\times}}}$}}
\put(60,40){\makebox{$\boxed{\phantom{\times}}$}}
\put(6,30){\makebox{$\cdot$}}
\put(6,30){\makebox{$\cdot$}}
\put(36,30){\makebox{$\cdot$}}
\put(36,30){\makebox{$\cdot$}}
\put(7,33){\line(1,0){30}}
\put(66,30){\makebox{$\cdot$}}
\put(66,30){\makebox{$\cdot$}}
\put(37,33){\line(1,0){30}}
\put(96,30){\makebox{$\cdot$}}
\put(96,30){\makebox{$\cdot$}}
\put(67,33){\line(1,0){30}}
\put(46,50){\makebox{$\curvearrowright$}}
\put(33,0){\makebox{\rotatebox[origin=c]{0}{$\circlearrowleft$}}}
\end{picture}$}
\end{figure}

For the simple modules, we can only describe the socle in full generality which will, however, be sufficient for our purposes. For the statement of the Theorem, recall the notation from Proposition \ref{prop common jm weight implies certain form}.

\begin{theorem}
\label{thm ires of simples}
\begin{enumerate}[(a)]
\item If $\ires \Delta(\lambda)=0$, then $\ires L(\lambda)=0$.
\item If $\ires \Delta(\lambda)= \Delta(\lambda')$ and $\iind \Delta(\lambda')=\Delta(\lambda)$, then $\ires L(\lambda)=L(\lambda')$.
\item Let $\lambda^+=\cdots \overset{i}{\bigvee}  \bigwedge \cdots \in \Lambda^+(r)$ and $\lambda^-=\cdots \overset{i}{\bigwedge} \bigvee \cdots \in \Lambda^+(r)$, then $\ires L(\lambda^+)=L(\lambda^-)$ and $\ires L(\lambda^-)=0$.
\item Let $\lambda^+=\cdots \raisebox{3pt}{$\qc \overset{\delta-i}{\xc}  \oc$} \cdots \in \Lambda^+(r+1/2)$ and $\lambda^-=\cdots \raisebox{3pt}{$\underset{\textstyle \raisebox{-1.9mm}{$\bigwedge$}}{?} \overset{\delta-i}{\oc} \vc$} \cdots \in \Lambda^+(r+1/2)$, then $\ires L(\lambda^+)=L(\lambda^-)$ and $\ires L(\lambda^-)=0$.
\item If $\lambda=\cdots \raisebox{4pt}{$\qc \overset{i}{\xc} \oc$} \cdots \in \Lambda^+(r)$, then the socle of $L(\lambda)$ is isomorphic to \begin{center}
$L(\cdots \raisebox{4pt}{$\qc \overset{i}{\xc} \oc$} \cdots) \oplus L(\cdots \raisebox{4pt}{$\underset{\textstyle \raisebox{-1.9mm}{$\bigwedge$}}{?} \overset{i}{\oc} \vc$} \cdots)$.
\end{center}
\item If $\lambda=\cdots \overset{\delta-i}{\bigwedge} \bigvee \cdots \in \Lambda^+(r+1/2)$, then the socle of $L(\lambda)$ is isomorphic to \begin{center}
$L(\cdots \overset{\delta-i}{\bigwedge}  \bigvee \cdots) \oplus L(\cdots \overset{\delta-i}{\bigvee}  \bigwedge \cdots)$.
\end{center}
\end{enumerate}
\end{theorem}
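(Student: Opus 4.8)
My plan is to run all six parts off three structural inputs. First, both $\ires$ and $\iind$ are exact: restriction is exact, projecting onto the generalized $i$-eigenspace of $L_k$ merely cuts out a direct summand, and $\ind_d=\res_{d+1}\circ G$ is a composite of two exact functors (globalization $G$ is exact since $P_{d+1}(\delta)e$ is projective over $eP_{d+1}(\delta)e$); the non-exactness mentioned in the introduction concerns only the twisted functor of Section 6. Second, I will use the adjunction $\iind\dashv\ires$. Third, $\ires$ commutes with the cellular duality $\circledast$, because the defining anti-involution fixes the Jucys--Murphy elements and so commutes both with $\res$ and with the eigenspace projection; hence $\ires L(\lambda)$ is self-dual, every $L(\mu)$ being self-dual. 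The one combinatorial engine I isolate first is: \emph{over a splitting field, any nonzero self-dual quotient $Q$ of a cell module $\Delta(\mu)$ is isomorphic to $L(\mu)$.} Indeed $Q$ being a self-dual quotient gives $\Delta(\mu)\twoheadrightarrow Q\cong Q^{\circledast}\hookrightarrow\Delta(\mu)^{\circledast}$, and since $\Hom(\Delta(\mu),\Delta(\mu)^{\circledast})$ is one-dimensional with the nonzero map having image $L(\mu)$, the composite forces $Q\cong L(\mu)$. Part (a) is then immediate: $\ires$ is exact and $\Delta(\lambda)\twoheadrightarrow L(\lambda)$, so $\ires\Delta(\lambda)=0$ forces $\ires L(\lambda)=0$.

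For (b), exactness applied to $\Delta(\lambda)\twoheadrightarrow L(\lambda)$ shows that $\ires L(\lambda)$ is a quotient of $\ires\Delta(\lambda)=\Delta(\lambda')$. It is nonzero: the adjunction gives $\Hom(\iind\Delta(\lambda'),L(\lambda))\cong\Hom(\Delta(\lambda'),\ires L(\lambda))$, and the left-hand side is nonzero because $\iind\Delta(\lambda')=\Delta(\lambda)$ surjects onto $L(\lambda)$. Being nonzero, self-dual, and a quotient of $\Delta(\lambda')$, the engine yields $\ires L(\lambda)\cong L(\lambda')$.

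For (c) and (d) I first read off from Theorem \ref{thm ires of cells} which end of the relevant two-term analysis survives. In the configurations defining $\lambda^+$ the size condition kills the trivial end, leaving $\ires\Delta(\lambda^+)=\Delta(\lambda^-)$; the nonvanishing of $\ires L(\lambda^+)$ again comes from the adjunction applied to the top cell-layer of $\iind\Delta(\lambda^-)$, so the engine gives $\ires L(\lambda^+)=L(\lambda^-)$ exactly as in (b). The vanishing statements $\ires L(\lambda^-)=0$ are the delicate half: here the \emph{other} end survives, $\ires\Delta(\lambda^-)=\Delta(\lambda^-)$, so the engine only says that if $\ires L(\lambda^-)\neq 0$ then $\ires L(\lambda^-)=L(\lambda^-)$. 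To exclude this I pass through the adjunction once more: $\ires L(\lambda^-)=L(\lambda^-)$ would force $L(\lambda^-)$ into the head of $\iind\Delta(\lambda^-)$, whereas the sequence $0\to\Delta(\lambda^-)\to\iind\Delta(\lambda^-)\to\Delta(\lambda^+)\to 0$ of Theorem \ref{thm ires of cells} should have simple head $L(\lambda^+)$. Establishing that this extension is genuinely nonsplit, so that the submodule $\Delta(\lambda^-)$ and hence $L(\lambda^-)$ lies in the radical, is the crux of this step.

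For the socle statements (e) and (f) the relevant restricted cell module is a genuine extension of $\Delta(\lambda)$ by $\Delta(\lambda\mp\epsilon)$ with \emph{both} ends present, which is exactly why no simple answer is available and only the socle can be controlled. Using self-duality I reduce the socle of $\ires L(\lambda)$ to its head, and then compute each constituent by $\Hom(L(\nu),\ires L(\lambda))\cong\Hom(\iind L(\nu),L(\lambda))$; this expresses the socle of $\ires L(\lambda)$ through the heads of the induced simples $\iind L(\nu)$. Running the $\ires$- and $\iind$-statements together by induction on the degree, and matching the two admissible arrow moves out of a $\bigtimes$-column against Proposition \ref{prop common jm weight implies certain form} (which guarantees these are the only $\nu$ sharing a JM weight), pins the socle down to the two claimed summands. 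I expect the main obstacle throughout to be precisely the indecomposability/nonsplitness of these two-term induced and restricted cell modules (equivalently, the nonvanishing of the relevant $\mathrm{Ext}^1$ between cell modules at the special parameter): it is what separates the sharp cases (c),(d) from the socle-only cases (e),(f), and it is the single ingredient not handed to me directly by the earlier exact sequences and the adjunction.
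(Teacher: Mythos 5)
Your parts (a), (b) and the non-vanishing halves of (c)/(d) are fine, and the self-duality ``engine'' is a legitimate substitute for the paper's socle analysis (modulo checking that the Jucys--Murphy elements are fixed by the diagram anti-involution, so that $\ires$ really commutes with the duality $\circledast$). But the vanishing statements $\ires L(\lambda^-)=0$ --- which you yourself identify as the crux --- contain a genuine gap, and it is not merely that non-splitness is left unproved: non-splitness of $0\to\Delta(\lambda^-)\to\iind\Delta(\lambda^-)\to\Delta(\lambda^+)\to 0$ is the wrong target. What you need is that $L(\lambda^-)$ does not occur in the head of the middle term, equivalently that the image of the extension class under $\operatorname{Ext}^1(\Delta(\lambda^+),\Delta(\lambda^-))\to \operatorname{Ext}^1(\Delta(\lambda^+),L(\lambda^-))$ is non-zero; this is strictly stronger than non-splitness whenever $\operatorname{rad}\Delta(\lambda^-)\neq 0$, which is exactly the situation here since $[\Delta(\lambda^-):L(\lambda^+)]\neq 0$. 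Worse, by the adjunction $\Hom(\iind\Delta(\lambda^-),L(\lambda^-))\cong\Hom(\Delta(\lambda^-),\ires L(\lambda^-))$, that stronger statement is literally equivalent to $\ires L(\lambda^-)=0$, so your reformulation is circular. The paper breaks the circle with an arithmetic input you never invoke: Theorem \ref{thm differ by one box then common CF} (extended to the half-integer case by a $j$-restriction argument) gives $[\Delta(\lambda^-):L(\lambda^+)]\neq 0$; then, since $\ires$ is exact and $\ires\Delta(\lambda^-)=\Delta(\lambda')$, applying $\ires$ to a composition series of $\Delta(\lambda^-)$ and using $\ires L(\lambda^+)=L(\lambda')$ shows that $\ires L(\lambda^-)\neq 0$ would force $[\Delta(\lambda'):L(\lambda')]\geq 2$, contradicting Theorem \ref{thm form of dec matrix}. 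Without this counting argument, or some substitute for the decomposition-number fact, the adjunction alone cannot yield the vanishing.

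Two further problems. First, your opening structural claim that $\iind$ is exact is false and contradicts the paper, which states in Section 6 that induction is only right exact: $P_{d+1}(\delta)e$ is projective as a left $P_{d+1}(\delta)$-module, but nothing makes it projective, or even flat, as a right $eP_{d+1}(\delta)e$-module, which is what your justification requires. You only use exactness of $\ires$ in (a) and (b), so the error is not fatal there, but it cannot be quoted as an input. Second, your plan for (e)/(f) routes through the heads of the induced simple modules $\iind L(\nu)$, which is essentially the content of the theorem itself; the paper instead tests the socle of $\ires L(\lambda)$ against cell modules: a socle constituent $L(\tau)$ yields a nonzero map $\Delta(\tau)\to\ires L(\lambda)$, the head of $\iind\Delta(\tau)$ is controlled by the filtrations of Theorem \ref{thm ires of cells}, and the multiplicity-one statement comes from the identity $\iind\Delta(\tau)=\Delta(\lambda)$, which gives $\dim\Hom(\Delta(\tau),\ires L(\lambda))=\dim\Hom(\Delta(\lambda),L(\lambda))=1$. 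You should rebuild (e)/(f) on that scaffolding rather than on induced simples.
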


\begin{rk}
Notice that all possibilities for $\lambda$ are covered in the theorem. Part $(a)$ covers the case when there are no arrows which can be moved so that the induced JM eigenvalue is $i$. Parts $(b),(c)$ and $(d)$ cover the case if there is one possibility to move arrows in $\lambda$ with induced JM weight $i$ and Parts $(e)$ and $(f)$ if there are two possibilities. Part $(b)$ and Parts $(c)$ and $(d)$ then distinguish how many ways there are to move arrows back, namely one or two. 
\end{rk}

\begin{proof}
We will repeatedly use the following facts: Firstly,
by exactness of $\ires$, the surjection $\Delta(\lambda) \to L(\lambda)$ from Theorem \ref{thm classification of simples}  yields \begin{align} \label{eqn ires l quotient of ires delta}
\ires \Delta(\lambda) \longrightarrow \ires L(\lambda) \longrightarrow 0
\end{align}
Also, if $L(\tau)$ occurs in the socle of $\ires L(\lambda)$, then $\Hom(\Delta(\tau),\ires L(\lambda)) \neq 0$ so that adjointness implies \begin{align}
\Hom(\iind \Delta(\tau), L(\lambda)) \neq 0
\end{align} Lastly, the head of $\iind \Delta(\tau)$ may only contain the modules  $L(\tau)$, $L(\tau+\epsilon)$ and $L(\tau-\epsilon')$ by Theorem \ref{thm ires of cells}.  Since $L(\lambda)$ is simple we may conclude that one of these must be equal to $L(\lambda)$ if $L(\tau)$ occurs in the socle of $\ires L(\lambda)$. More precisely, we must either have:
\begin{itemize}
\item $\tau=\lambda+\epsilon$ provided $\lambda \in \Lambda^+(r+1/2)$ and $c(\epsilon)=\delta-i$ or
\item $\tau=\lambda-\epsilon$ provided $\lambda \in \Lambda^+(r)$ and $c(\epsilon)=i$ or
\item $\tau=\lambda$ provided $\lambda \in \Lambda^+(r+1/2)$ and $|\lambda|=i$ or
\item $\tau=\lambda$ provided $\lambda \in \Lambda^+(r)$ and $|\lambda|=\delta-i$.
\end{itemize}
Part $(a)$ follows immediately from (\ref{eqn ires l quotient of ires delta}), so consider $(b)$. Again by (\ref{eqn ires l quotient of ires delta}), $\ires L(\lambda)$ is a quotient of $\ires \Delta(\lambda)=\Delta(\lambda')$. In particular, its head must be $L(\lambda')$ or zero. We will examine the socle of $\ires L(\lambda)$ more closely. As shown above, if a module $L(\tau)$ occurs in the socle of $\ires L(\lambda)$, then $\tau$ must be equal to one of the possibilities above. If $\tau=\lambda+\epsilon$ with $c(\epsilon)=\delta-i$, then this means that $\lambda$ has an addable box of content $\delta-i$ so that it follows with the assumption that $\ires \Delta(\lambda)=\Delta(\lambda')$ that $\ires \Delta(\lambda)=\Delta(\lambda+\epsilon)$ and thus $\tau=\lambda'$. Similarly, if $\tau=\lambda-\epsilon$, then $\ires \Delta(\lambda)=\Delta(\lambda-\epsilon)$ and again $\tau=\lambda'$. With exactly the same argument, one easily shows that if $\tau=\lambda$ then $\ires \Delta(\lambda)=\Delta(\lambda)$ in both the case $\lambda \in \Lambda^+(r)$ and $\lambda \in \Lambda^+(r+1/2)$ so that here again $\tau=\lambda'$. Therefore, if a module occurs in the socle of $\ires L(\lambda)$, then it must be isomorphic to $L(\lambda')$. Since $L(\lambda')$ is also its simple head and $\ires L(\lambda)$ is a quotient of $\Delta(\lambda')$ which contains $L(\lambda')$ only once as a composition factor by Theorem \ref{thm form of dec matrix}, we can conclude that either $\ires L(\lambda)=L(\lambda')$ or zero. But since $\iind \Delta(\lambda')=\Delta(\lambda)$, composing with the surjection from $\Delta(\lambda)$ onto its head $L(\lambda)$ yields $0 \neq \Hom(\iind \Delta (\lambda'),L(\lambda))=\Hom(\Delta(\lambda'), \ires L(\lambda))$ and so $\ires L(\lambda)$ is non-zero, as required.

Now consider Parts $(c)$ and $(d)$ simultaneously. We will write $\lambda$ for either $\lambda^+$ and $\lambda^-$.  In each case, $\ires \dla = \dlap$ with $\lambda'= \lambda^-$ for $\lambda^- \in \Lambda^+(r)$ and $\lambda'= \lambda^+$ for $\lambda^+ \in \Lambda^+(r+1/2)$. Thus, $\ires L(\lambda)$ is a quotient of $\Delta(\lambda')$. We will again examine the socle of $\ires \lla$ more closely. If $\lambda \in \Lambda^+(r)$, then $\tau$ can either be $\lambda-\epsilon$ (with $c(\epsilon)=i$) or $\lambda$ (with $|\lambda|=\delta-i$). However, $\lambda^-$ has no removable box of content $i$ (so $\tau=\lambda^+-\epsilon=\lambda^-$) and $|\lambda^+|\neq \delta-i$ (so $\tau=\lambda^-$). In each case, we obtain that $\tau=\lambda^-=\lambda'$. Similarly, if $\lambda \in \Lambda^+(r+1/2)$, then either $\tau=\lambda+\epsilon$ (with $c(\epsilon)=\delta-i$) or $\tau=\lambda$ (with $|\lambda|=i$). Again, in each case $\tau=\lambda^+=\lambda'$. Therefore, just as in $(b)$,  $\ires \lla$ is either $L(\lambda')$ or zero.

We claim that in both cases $[\Delta(\lambda^-):L(\lambda^+)] \neq 0$. In the case, when $\lambda \in \Lambda^+(r)$, this follows from Theorem \ref{thm differ by one box then common CF} (and, of course, Theorem \ref{thm form of dec matrix}) since $c(\epsilon)=i=\delta-(\delta-i+1)+1=\delta-|\lambda^+|+1$ (considered modulo $p$ if $p>r$). In the case when $\lambda \in \Lambda^+(r+1/2)$ we can deduce it from the integer degree case: Firstly, we may without loss of generality restrict to the case of the partition algebra of degree $|\lambda^+|+1/2$ by Theorem \ref{thm form of dec matrix}. Thus, $\Delta(\lambda^+)$ is an inflated symmetric group module and therefore simple as $p$ is larger than $r$. Since every partition (except the empty partition) has at least two addable boxes and since $\lambda^+$ and $\lambda^-$ must be identical outside the two drawn columns, there is an undrawn $\bigvee$ outside the drawn columns which is both addable in $\lambda^+$ and $\lambda^-$ and has content $j$ say. This means that  there is an arrow in both $\lambda^+$ and $\lambda^-$ labelled $j-1$ with no $\bigvee$ to the left of it so that $j-1 \neq \delta-i,\delta-i-1,\delta-i-2$.  If $\epsilon$ and $\epsilon'$ are the corresponding addable boxes of content $j$, then  $\textrm{j-res} \ \Delta(\lambda^++\epsilon)=\Delta(\lambda^+) $ and $\textrm{j-res} \ \Delta(\lambda^-+\epsilon')=\Delta(\lambda^-) $  since $|\lambda^++\epsilon|=i+1 \neq \delta-j$ (otherwise $j-1=\delta-i-2$) and $|\lambda^-+\epsilon'|=i \neq \delta-j$. By Theorem \ref{thm differ by one box then common CF}, we conclude that $[\Delta(\lambda^-+\epsilon'):L(\lambda^++\epsilon)] \neq 0$ and so there is a non-zero map $\Delta(\lambda^++\epsilon) \to \Delta(\lambda^-+\epsilon')/M$ for some module $M$ and the map is injective since $\Delta(\lambda^++\epsilon)$ is simple.  Applying $\textrm{j-res}$ thus yields an injective, and hence non-zero, map $\Delta(\lambda^+) \to \Delta(\lambda^-)/\textrm{j-res} \ M$, as claimed.

By Theorem \ref{thm ires of cells}, there is a surjection $\iind \Delta(\lambda') \to \Delta(\lambda^+)$ in both the integer and half integer case. Composing with the surjection $\Delta(\lambda^+) \to L(\lambda^+)$, we obtain $$0 \neq \Hom(\iind \Delta(\lambda'), L(\lambda^+)) = \Hom (\Delta(\lambda'), \ires L(\lambda^+)). $$ In particular, $\ires L(\lambda^+)$ is non-zero and therefore isomorphic to $L(\lambda')$ as shown above. Now $\ires \Delta(\lambda^-)=\Delta(\lambda')$  as it is equal to $\Delta(\lambda^-)$ in the integer and $\Delta(\lambda^+)$ in the half integer degree case. But because $\Delta(\lambda^-)$ has both $L(\lambda^+)$ and $L(\lambda^-)$ as composition factors, it follows that $\Delta(\lambda')$ has $\ires L(\lambda^+)=L(\lambda')$ and $\ires L(\lambda^-)$ as quotients by exactness of $\ires$.  It follows that $\ires L(\lambda^-)=0$ since otherwise $[\Delta(\lambda'):L(\lambda')] >1$ - a contradiction to Theorem \ref{thm form of dec matrix}.

It remains to consider Parts $(e)$ and $(f)$. Again, if a module $L(\tau)$ occurs in the socle of $\ires L(\lambda)$ then $\tau$ must be one of the possibilities listed at the beginning of the proof and this yields precisely the partitions in the theorem. It remains to show that all of them do occur and with multiplicity $1$. One checks that for all possibilities for $\tau$, we always have $\iind \Delta(\tau)= \Delta(\lambda)$. Thus, we get \begin{align*}
1 = \dim \Hom(\Delta(\lambda),L(\lambda))= \dim \Hom(\iind \Delta(\tau),L(\lambda)) = \dim \Hom(\Delta(\tau),\ires L(\lambda))
\end{align*} so each possibility does occur and with multiplicity $1$.
\end{proof}

\section{Decomposition numbers in non-dividing characteristic}
We are finally in a position to prove a characterisation of decomposition numbers:

\begin{theorem}
\label{thm determination of dec nos} Let $d\in \{r,r+1/2\}$ and
suppose the characteristic of the ground field is $0$ or larger than $r$. If $\mu$ is a partition of $r$ and $\lambda \in \Lambda^+(d)$, then $L(\mu)$ is a composition factor of $\Delta(\lambda)$ if and only if there is a $\mu$-tableau $u$ and a $\lambda$-tableau $t$ with $\wt(u)=\wt(t)$ or, equivalently, if and only if for all  $\mu$-tableau $u$ there is a $\lambda$-tableau $t$ with $\wt(u)=\wt(t)$.
\end{theorem}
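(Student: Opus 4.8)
One implication is already in hand: by Proposition \ref{prop same cf means same wt}, if $\lmu$ is a composition factor of $\dla$ then $\lambda$ and $\mu$ have a common JM weight, and in fact for \emph{every} $\mu$-tableau $u$ there is a $\lambda$-tableau $t$ with $\wt(u)=\wt(t)$. Since a $\mu$-tableau always exists, this simultaneously yields the ``for all'' formulation from the ``there exists'' one once the converse is known, so the two weight conditions are equivalent and the whole theorem reduces to the single implication: if $\lambda$ and $\mu$ have a common JM weight (with $\mu$ a partition of $r$ and $p=0$ or $p>r$), then $[\dla:\lmu]\neq 0$. The plan is to prove this by induction, feeding the explicit effect of the refined functors $\ires,\iind$ on cell and simple modules (Theorems \ref{thm ires of cells} and \ref{thm ires of simples}) into the combinatorial normal form of Proposition \ref{prop common jm weight implies certain form}.

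First I would remove the half-integer case $d=r+1/2$. There Proposition \ref{prop common jm weight implies certain form} puts $d(\lambda),d(\mu)$ into shape $(a)$, which is the integer-degree shape $(b)$ conjugated by $\tau$; as $\tau$ realises the Morita equivalence $T$ of Theorem \ref{thm morita equiv of half with full} (Definition \ref{defn of tau}) and $T$ matches cell modules with cell modules and simples with simples, the decomposition number is invariant under $\tau$, so it suffices to treat $d=r$. Here $d(\lambda)=\sigma\,d(\mu)$, i.e. the diagrams differ only in one block
\[
\mu:\ \vc\,\oc\cdots\oc\,\wc ,\qquad \lambda:\ \wc\,\oc\cdots\oc\,\vc
\]
of some width $l\ge 2$, and I would induct on $r$. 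When $l=2$ the two neighbouring columns carry an adjacent $\vc\wc$ in $\mu$ and $\wc\vc$ in $\lambda$; reading the definition of $d(\mu)$ backwards this says precisely that $\lambda=\mu-\epsilon$ for a removable box of $\mu$ with $c(\epsilon)=\delta-r+1$, so the base case is exactly Theorem \ref{thm differ by one box then common CF}.

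For the inductive step $l>2$ I would apply $\ires$ for a content $i$ chosen inside the block so that, by Theorems \ref{thm ires of cells} and \ref{thm ires of simples}, it simultaneously sends $\lmu$ to a single simple $L(\bar\mu)$ with $\bar\mu$ a partition of $r-1$ (the non-degenerate case \ref{thm ires of simples}(b), where the relevant arrow has a unique inverse move), and places a cell module $\Delta(\bar\lambda)$ as a section of the cell filtration of $\ires\dla$. That such an $i$ exists, that $\bar\lambda,\bar\mu$ again share a JM weight, and that the block strictly shrinks are all read off from Proposition \ref{prop common weight over arbitr p means same no of arrows} together with the arrow-moves of Figure \ref{figure directions to move for ires and iind}. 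The induction hypothesis then gives $[\Delta(\bar\lambda):L(\bar\mu)]\neq 0$.

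The remaining task is to lift this non-vanishing back up, and this is the step I expect to be the main obstacle. Since $\ires$ is exact one has
\[
[\ires\dla:L(\bar\mu)]=\sum_{\nu}[\dla:L(\nu)]\,[\ires L(\nu):L(\bar\mu)],
\]
and the left-hand side is also computable from the known cell filtration of $\ires\dla$ and contains the positive contribution of $\Delta(\bar\lambda)$. To conclude $[\dla:\lmu]\neq 0$ one must show that among the $\nu$ occurring as composition factors of $\dla$ --- which by Proposition \ref{prop same cf means same wt} share a JM weight with $\lambda$, hence have the restricted shape of Proposition \ref{prop common jm weight implies certain form} --- the only one with $[\ires L(\nu):L(\bar\mu)]\neq 0$ is $\nu=\mu$. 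This is exactly where the full force of Theorem \ref{thm ires of simples}, which pins down $\ires$ on \emph{every} simple module, and the multiplicity-one statement of Corollary \ref{cor dec multiplicities are 0 or 1} enter: together they isolate the summand for $\nu=\mu$ as $[\dla:\lmu]\cdot 1$ and force it to account for the nonzero value $[\Delta(\bar\lambda):L(\bar\mu)]$, giving $[\dla:\lmu]\neq 0$. The delicate part is precisely this bookkeeping of which simple constituent of $\dla$ restricts to $L(\bar\mu)$, and ruling out that it comes from a competing $\nu\neq\mu$.
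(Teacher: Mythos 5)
Your plan is correct in substance and shares the paper's skeleton (Proposition \ref{prop same cf means same wt} for one direction, the normal form of Proposition \ref{prop common jm weight implies certain form}, an induction driven by refined restriction, Theorem \ref{thm differ by one box then common CF} as base case, and Theorem \ref{thm ires of simples} to control interfering simples), but you execute the two main steps differently. First, the paper never invokes the Morita equivalence $T$: it runs a single induction over \emph{half-integer} degrees, treating $d=r$ and $d=r+1/2$ together, with the restriction eigenvalue $i$ read off as the last entry of the given equal-weight tableaux and the reduced pair $(\lambda',\mu')$ taken to be the second-last partitions of those tableaux; you instead dispose of the half-integer case up front via $T$ and then induct purely over integer degrees using $T\circ\ires$ (the functor $e_i$ of Section 6), shrinking the width of the differing configuration by one per step. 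Your route is a cleaner induction, but it imports the compatibility of $T$ with cell and simple modules (label-preserving), which the paper states (Theorem \ref{thm morita equiv of half with full}, Definition \ref{defn of tau}) but never proves nor uses in its own proof, so your argument is slightly less self-contained. Second, the lifting step: the paper takes a submodule $U\subseteq\dlap$ of minimal length containing $\lmup$, so with simple top $\lmup$, uses adjointness $0\neq\Hom(U,\ires\dla)=\Hom(\iind U,\dla)$, and then analyses which $L(\tau)$ in the head of $\iind U$ can be composition factors of $\dla$; you replace this by exactness of $\ires$ and the multiplicity identity $[\ires\dla:L(\bar\mu)]=\sum_\nu[\dla:L(\nu)][\ires L(\nu):L(\bar\mu)]$, with the left side positive by induction and all $\nu\neq\mu$ terms vanishing. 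Both versions hinge on exactly the same two facts --- composition factors of $\dla$ are confined to $\{\lla,\lmu\}$ (via Theorem \ref{thm form of dec matrix} and Propositions \ref{prop same cf means same wt}, \ref{prop common jm weight implies certain form}), and $\ires\lla$ does not involve $L(\bar\mu)$ --- and your crux does go through: with $i$ the content of the last box of the long row of $\mu$, Theorem \ref{thm ires of simples}(b) applies to both $\lmu$ and $\lla$, giving $\ires\lla\cong L(\lambda)$ over the half-integer algebra, which differs from $L(\bar\mu)$ because $|\lambda|<r-1$ whenever the width exceeds $2$; note that Corollary \ref{cor dec multiplicities are 0 or 1} is not actually needed here, since the accounting only requires the $\nu\neq\mu$ terms to vanish. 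What your packaging buys is transparency (no adjunction, no head analysis in the final step); what the paper's buys is independence from $T$ and from any choice of $i$, since the tableaux themselves dictate the restriction. Nothing is saved globally by avoiding adjointness, as it is still the engine behind Theorem \ref{thm ires of simples} itself.
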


\begin{proof}
We have already shown two of the required three implications in Proposition \ref{prop same cf means same wt}. We will show the remaining direction by induction on the degree. For $r=1$ the result is trivially true. Suppose it is true in degree $d \in \{r-1/2,r\}$. Let $u$ be a $\mu$-tableau and $t$ be a $\lambda$-tableau with $\wt(u)=\wt(t)$. Let $\mu'$ and $\lambda'$ be the second-last partitions in $u$ and $t$, respectively, and denote by $u'$ and $t'$ the tableaux obtained by removing the last partition in $u$ and $t$.

Since $\lambda$ and $\mu$ have a common JM weight, Proposition \ref{prop common jm weight implies certain form} implies that they are identical except possibly at one configuration of columns as follows:\\
   $\begin{array}{llllll} \textrm{If} \ \mu \in \Lambda^+(r+1/2): \hspace*{0.1cm}&
\mu:  & \qc \vc \oc  \cdots \oc  \wc \oc  \ \
\textrm{If} \ \mu \in \Lambda^+(r): \hspace*{0.1cm}&
\mu:  &   \vc \oc \cdots \oc \wc     \\ 
\ & \lambda:   & \underset{\textstyle \raisebox{-1.9mm}{$\bigwedge$}}{?} \oc  \oc \cdots \oc \oc \vc  & 
 \lambda:   & \wc \oc \cdots \oc \vc
\end{array}$ \\ In particular, outside of these columns only arrows of the form  $\bigvee$ occur and the $\bigvee$ occur in the same columns for both. Notice that in the case when the deviating columns are of the form 
$\raisebox{4pt}{$\qc \xc \oc$}$ and $\raisebox{4pt}{$\underset{\textstyle \raisebox{-1.9mm}{$\bigwedge$}}{?}  \oc \vc $}$,  we are done by Theorem \ref{thm differ by one box then common CF} (and its generalization to the half integer case, see the proof of Theorem \ref{thm ires of simples}). 

We claim that $\ires \dmu= \dmup$ and $\ires \dla  = \dlap$ in each case. If $\mu \in \Lambda^+(r+1/2)$, then we must necessarily have $\mu=\mu'$ since, by assumption, $\mu$ is a partition of $r$. Thus, the last entry of $\wt(u)=\wt(t)$ is $|\mu|=:i$. By Figure \ref{figure directions to move for ires and iind}(c), we need to show that neither $\lambda$ nor $\mu$ have two adjacent columns of the form $\raisebox{4pt}{$\wc \vc$}$.  Notice that if this were the case for one of $\mu$ or $\lambda$, then Proposition \ref{prop common weight over arbitr p means same no of arrows} implies that the same is true for the other too: This is true for $\lambda$ if and only if column $\delta-i-1$ of $\tau d(\lambda)$ is of the form $\bigtimes$ which is the case if and only if column $\delta-i-1$ of $\tau d(\mu)$ is of the form $\bigtimes$ which is equivalent to $\mu$ having two adjacent columns of the form $\raisebox{4pt}{$\wc \vc$}$. But $|\lambda|=|\mu|=r$ implies $\lambda=\mu$ since the fact that $p>r$ means that the weight determines the partition in this case which shows the claim in the half integer case.
In the case when $\mu \in \Lambda^+(r)$, then by Figure \ref{figure directions to move for ires and iind}(d), we need to show that neither $\lambda$ nor $\mu$ have a column of the form $\bigtimes$. This, however, is immediate since if one has such a column, then by Proposition \ref{prop common weight over arbitr p means same no of arrows} the same column of the other is of the form $\bigtimes$, too. This again leads to $|\lambda|=|\mu|$ and thus $\lambda=\mu$ as in the previous case. Thus, Theorem \ref{thm ires of cells} implies that indeed $\ires \dmu= \dmup$ and $\ires \dla=\dlap$.

By construction $\wt(u')=\wt(t')$ so that by induction assumption, we must have $[\dlap:\lmup] \neq 0$.  This implies that there is a submodule $U$ of $\dlap$ with unique simple top $\lmup$: Let $U$ be a submodule of minimal composition length containing $\lmup$ as a composition factor. Using adjointness and the fact that $\ires \dla=\dlap$, we may deduce $$0 \neq \Hom(U,\dlap)=\Hom(U,\ires \dla)=\Hom(\iind U, \dla).$$ We will show that the part of the head of $\iind U$ which is not in the kernel of the implied non-zero homomorphism to $\dla$ is precisely $\lmu$. So assume $L(\tau)$ occurs in the head of $\iind U$ and is also a composition factor of $\dla$.  Then \begin{align} \label{eqn head of iind U}
0 \neq \Hom(\iind U, L(\tau))= \Hom(U, \ires L(\tau)).
\end{align} 
Since $L(\tau)$ is a composition factor of $\dla$, $\tau$ and $\lambda$ have a common JM weight by Proposition \ref{prop same cf means same wt}. Notice that because $[\dla:L(\tau)] \neq 0$, we can actually apply Proposition \ref{prop common jm weight implies certain form} even though $\tau$ is not a partition of $r$. This follows because the non-vanishing of the decomposition multiplicity over $P_d(\delta)$ also implies that the decomposition multiplicity does not vanish over $P_{|\tau|}(\delta)$ and $P_{|\tau+1/2|}(\delta)$, respectively, by Theorem \ref{thm form of dec matrix}. This in turn implies that $\lambda$ and $\tau$ also have a common JM weight over  $P_{|\tau|}(\delta)$ and $P_{|\tau+1/2|}(\delta)$, respectively, for which we can apply  Proposition \ref{prop common jm weight implies certain form}. Distinguish two cases:
\newline \textbf{Case 1:} $\lambda \in \Lambda^+(r+1/2)$. In this case, we must have $\lambda=\cdots \raisebox{4pt}{$\underset{\textstyle \raisebox{-1.9mm}{$\bigwedge$}}{?} \oc  \oc \cdots \oc \oc \vc$} \cdots$ and is identical to $\mu= \cdots  \raisebox{4pt}{$\qc \vc \oc  \cdots \oc  \wc \oc $}   \cdots$ outside the drawn columns. Proposition \ref{prop common jm weight implies certain form} now implies that $\tau$ must be obtained from $\lambda$  by  first moving the $\bigwedge$ in $\lambda$ to the right by $1$ then swapping the $\bigwedge$ with a neighbouring $\bigvee$ and then moving the $\bigwedge$ to the left by $1$ again. Since $|\tau|\geq |\lambda|$ ($L(\tau)$ is a composition factor of $\dla$), we can only move the $\bigwedge$ in $\lambda$ to the right which precisely yields $\mu$.  Thus, either $\tau=\lambda$ or $\tau=\mu$. Now both $\lambda$ and $\mu$ do not contain neighbouring columns of the form $\raisebox{4pt}{$\overset{\delta-i}{\wc}\vc$}$ as was shown earlier. Thus, we are not in Case $(f)$ of Theorem \ref{thm ires of simples} so that $\ires L(\tau)=L(\tau')$, for some partition $\tau'$, or zero. By (\ref{eqn head of iind U}), $\ires L(\tau)$ cannot be zero and since $U$ has simple head $L(\mu)$ we must moreover have $\ires L(\tau)=L(\mu')$. But $\mu'=\mu$ in this case so that the latter is false for $\tau=\lambda$ unless $\lambda=\lambda^-$ and $\mu=\lambda^+$ in the notation of Theorem \ref{thm ires of simples}(d). However, we have already shown the theorem to hold in this case so that we may assume $\tau=\mu$, as required.
\newline \textbf{Case 2:} $\lambda \in \Lambda^+(r)$. In this case, $\lambda=\cdots \raisebox{4pt}{$\wc \oc \raisebox{-4pt}{$\cdots$} \oc \vc$} \cdots$ and is identical to $\mu=\cdots \raisebox{4pt}{$\vc \oc \raisebox{-4pt}{$\cdots$} \oc \wc$} \cdots$ outside of the drawn columns. This time, partitions with a common JM weight with $\lambda$ are obtained by swapping a $\bigwedge$ with a neighbouring $\bigvee$. As before $|\tau| \geq |\lambda|$ so the only possibilities are $\tau=\lambda$ or $\tau=\mu$.  Because neither $\mu$ nor $\lambda$ has a  column of the form $\bigtimes$, we are not in Case $(e)$ of Theorem \ref{thm ires of simples} and by the same reasoning as in Case 1, we therefore have $\ires L(\tau)=L(\mu')$. But the latter is false for $\tau=\lambda$ unless $\mu=\cdots \raisebox{4pt}{$\vc \underset{i}{\wc}$} \cdots$ and $\lambda=\cdots \raisebox{4pt}{$\wc \underset{i}{\vc}$} \cdots$, see Figure \ref{figure directions to move for ires and iind}$(c)$. 
In this case, however, $\lambda=\lambda^-$ and $\mu=\lambda^+$  in the notation of Theorem \ref{thm ires of simples}$(c)$ and by Theorem \ref{thm differ by one box then common CF}, we therefore have $[\Delta(\lambda^+):\Delta(\lambda^-)] \neq 0$ which is what we want to show. 
 \end{proof}

\begin{rk}
 Notice that Proposition \ref{prop common jm weight implies certain form} gives a graphical description of when two partitions do have a common JM weight.
\end{rk}

The previous statement can also be interpreted in a Lie theoretic way. We will use a similar setup as in Section 8 of \cite{CDMwalledblocks}. So let $\{\epsilon_{0},\epsilon_{1},\epsilon_{2},\ldots\,\epsilon_r \}$ be a formal set of symbols, $X_r= \bigoplus_{i=0}^r \Z \epsilon_i$ be the weight lattice and let $\Phi=\{\pm(\epsilon_i-\epsilon_j) \ | \ i <j \}$ be the corresponding root system of type $A$. As usual we write a weight $\lambda=\sum_{i=0}^r \lambda_i \epsilon_i$ as $(\lambda_0,\ldots,\lambda_r)$. We define a weight to be dominant if  $\sum_{i=0}^r \lambda_i=0$ and for $i>0$ we have $\lambda_i \in \N_0$ with $\lambda_i\geq\lambda_{i+1}$. Thus the last $r$ entries form a partition and the first entry records the negative of the size of the partition. We denote by $X^+$ the set of dominant weights. We equip $X \otimes_{\Z} \R$  with an inner product by linear extension of $(\epsilon_i,\epsilon_j)=\delta_{ij}$ where $\delta_{ij}$ is the Kronecker Delta. The Weyl group $W$ is the group generated by reflections $s_{\alpha}$ for $\alpha \in \Phi$ where $s_{\alpha}=\lambda-\frac{2(\lambda,\alpha)}{(\alpha,\alpha)}\alpha$. $W$ is isomorphic to the symmetric group $S_{r+1}$ and acts on weights by permuting the entries. For the description of the blocks and decomposition numbers, we will need the dot action of $W$ on $X$ which just shifts the origin of the usual action by $\rho$. It turns out that we have to use different origins for the full integer and the half integer case. So if $d=r+1/2$, we set $\rho=(0,(1-\delta)-1,(1-\delta)-2,\ldots)$ and for $d=r$ we choose  $\rho=(1,(1-\delta)-1,(1-\delta)-2,\ldots)$. Then the dot action of $w \in W$ on $\lambda$ is given by $w \circ \lambda= w(\lambda+\rho)-\rho$. 

Notice that $\lambda+\rho$ contains precisely the data of the arrow diagram $d(\lambda)$ if $d \in \N$ and of $\tau d(\lambda)$ if $d \notin \N$. However,  the labels of the $\bigvee$ are shifted by $(1-\delta)$. Thus, we could redefine arrow diagrams as follows: The positions on the arrow diagram are labelled by integers and there is a $\bigwedge$ at position $(\lambda+\rho)_0$  and  $\bigvee$s at positions $(\lambda+\rho)_1,\ldots,(\lambda+\rho)_r$ (we can extend this to partitions with infinitely many parts by considering the entries to be zero from a certain point onwards which gives exactly the arrow diagrams). This leads to the following reformulation of Theorem \ref{thm determination of dec nos}.

\begin{theorem}
\label{thm lie theoretic det of dec nos}
Suppose $F=\C$ and $\lambda$ and $\mu$ are dominant (integral) weights. Then $[\Delta(\lambda):L(\mu)]=1$ if and only if there exists $i$ such that  $(1,i) \circ \mu=\lambda$. 
\end{theorem}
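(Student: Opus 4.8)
The plan is to read the statement off from the combinatorial criterion already established, using the dictionary between $\lambda+\rho$ and the arrow diagram recorded immediately before the statement.

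First I would reduce everything to a statement about common JM weights. Since $F=\C$ we have $p=0$, so every partition is $p$-regular, $L(\mu)$ is defined for all dominant $\mu$, and Corollary~\ref{cor dec multiplicities are 0 or 1} applies (its proof rests only on Proposition~\ref{prop common jm weight implies certain form}, which is stated for $p=0$ or $p>r$), giving $[\dla:\lmu]\in\{0,1\}$. By Theorem~\ref{thm determination of dec nos} — reducing a general dominant $\mu$ to the top layer of $P_{|\mu|}(\delta)$ via the nesting in Theorem~\ref{thm form of dec matrix} when $\mu$ is not a partition of $r$ — we obtain that $[\dla:\lmu]=1$ holds exactly when $\lambda$ and $\mu$ have a common JM weight. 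By Proposition~\ref{prop common jm weight implies certain form} and the reformulation recorded after Definition~\ref{defn sigma action of arrow diag}, this happens if and only if $d(\lambda)=\sigma\,d(\mu)$ when $d\in\N$, and $d(\lambda)=\tau\inv\sigma\tau\,d(\mu)$ when $d\notin\N$.

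The core step is to identify $\sigma$ with the dot action of a transposition. The entries of $\lambda+\rho$ record the arrow datum of $d(\lambda)$ (for $d\in\N$) or of $\tau d(\lambda)$ (for $d\notin\N$): a $\bigwedge$ sits at position $(\lambda+\rho)_0$ and the $\bigvee$'s at $(\lambda+\rho)_1,\dots,(\lambda+\rho)_r$. By Definition~\ref{defn sigma action of arrow diag}, $\sigma$ interchanges a neighbouring $\bigvee\bigwedge$ pair; concretely it exchanges the coordinate $(\mu+\rho)_0$ of the $\bigwedge$ with the coordinate $(\mu+\rho)_i$ of the unique $\bigvee$ immediately to its left, leaving all other coordinates fixed. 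In coordinates this is exactly the transposition interchanging the zeroth and $i$th entries of $\mu+\rho$, i.e.\ the element denoted $(1,i)$ in the statement. Hence $\sigma\,d(\mu)$ is the arrow datum of $(1,i)(\mu+\rho)$, so that $d(\lambda)=\sigma\,d(\mu)$ is equivalent to $\lambda+\rho=(1,i)(\mu+\rho)$, that is, to $(1,i)\circ\mu=\lambda$.

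It then remains to handle the half-integer case and to check dominance and orientation. For $d\notin\N$ the entries of $\lambda+\rho$ record $\tau d(\lambda)$ rather than $d(\lambda)$, and correspondingly $\rho_0=0$ instead of $1$; this single unit of shift is precisely the shift produced by $\tau$, so $d(\lambda)=\tau\inv\sigma\tau\,d(\mu)$ is equivalent to $\tau d(\lambda)=\sigma\,\tau d(\mu)$, and the computation of the previous paragraph applies verbatim to the $\tau$-diagrams, again yielding $(1,i)\circ\mu=\lambda$. The one point that I expect to require genuine care is that $(1,i)\circ\mu$ must itself be \emph{dominant}: one must verify that exchanging $(\mu+\rho)_0$ with $(\mu+\rho)_i$ keeps the $\bigvee$-coordinates strictly decreasing in their index precisely when the chosen $\bigvee$ is the nearest one to the left with no arrow in between, which is exactly the column configuration $(a)$, respectively $(b)$, of Proposition~\ref{prop common jm weight implies certain form}. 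Because this $\bigvee$ lies to the left, $(\mu+\rho)_i>(\mu+\rho)_0$, so the $\bigwedge$ moves to a larger coordinate and $|\lambda|\le|\mu|$; this fixes the orientation of the swap and is consistent with $\lmu$ being a composition factor of $\dla$. Matching this dominance-and-orientation condition with the drawn configurations is the main, though essentially routine, obstacle; everything else is a direct application of the $\lambda+\rho$ dictionary.
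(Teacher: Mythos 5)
Your proposal is correct and follows essentially the same route as the paper's own proof: reduce via Theorem \ref{thm determination of dec nos} (together with Corollary \ref{cor dec multiplicities are 0 or 1} for multiplicity one) to the common-JM-weight criterion, translate this through Proposition \ref{prop common jm weight implies certain form} and the remark after Definition \ref{defn sigma action of arrow diag} into the condition $d(\lambda)=\sigma\, d(\mu)$ (or $d(\lambda)=\tau\inv\sigma\tau\, d(\mu)$), and then identify $\sigma$ with the dot action of $(1,i)$ via the $\lambda+\rho$ dictionary, with the half-integer shift absorbed into the choice of $\rho$ and dominance corresponding exactly to the neighbouring-pair configurations. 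If anything, you are slightly more explicit than the paper about the multiplicity-one step and about reducing general dominant $\mu$ to the case of a partition of $r$.
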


\begin{proof}
We know that  $[\Delta(\lambda):L(\mu)]=1$ if and only if $\lambda$ and $\mu$ have a common JM weight by Theorem \ref{thm determination of dec nos}. By Proposition \ref{prop common jm weight implies certain form}, this is the case if and only if $d(\lambda)$ and $d(\mu)$ are of a certain form. As remarked after Definition \ref{defn sigma action of arrow diag}, this form is equivalent to the arrow diagrams differing by the action of $\sigma$ or $\tau \inv \sigma \tau$, respectively. However, the action of $(1,i)$ is the same as the action of $\sigma$ and $\tau \inv \sigma \tau$ if $(1,i)\circ \mu$ is dominant: We must have that $(\lambda+\rho)_0=-|\lambda|$ (or $-|\lambda|+1$ in the full integer case) is between $(\lambda+\rho)_{i-1}$ and $(\lambda+\rho)_{i+1}$ and thus swapping the first and the $i$th entry amounts to swapping a $\bigvee$ with a neighbouring $\bigwedge$. In the half integer case, the shift $\tau$ is hidden in the different choice of $\rho$. 
\end{proof}

Theorem \ref{thm determination of dec nos} also allows us to settle the question of semisimplicity over an arbitrary field which does not seem to feature in the existing literature. Semisimplicity of the partition algebra was first studied by Martin and Saleur over $\C$ \cite{MartinSaleur} and also in \cite{HalversonRam} in the half integer case over $\C$ when $\delta \geq 2$.

\begin{theorem}
\label{thm semisimplicity}
The partition algebra $P_d(\delta)$ (with $d \in \{r,r+1/2 \}$) over the field $F$ of characteristic $p$ is semisimple if and only if $\delta \notin \{2(d-r),\ldots,2(d-1) \}$ (considered as a subset of $\Z 1_F$) and either $p=0$ or $p>r$.
\end{theorem}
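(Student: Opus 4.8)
The plan is to reduce semisimplicity to the vanishing of all off-diagonal decomposition numbers and then to read the bad parameters off the combinatorics of common JM weights. First I would use that, $P_d(\delta)$ being cellular, it is semisimple if and only if every cell module $\Delta(\lambda)$ is simple, i.e.\ if and only if its decomposition matrix is the identity. By Theorem \ref{thm form of dec matrix} the matrix $D_{P_d(\delta)}$ contains $D_{FS_r}$ as its top-left block, and $D_{FS_r}=I$ exactly when $FS_r$ is semisimple, i.e.\ when $p=0$ or $p>r$. Hence if $0<p\le r$ then $FS_r$ is non-semisimple and $P_d(\delta)$ is non-semisimple, which yields the condition ``$p=0$ or $p>r$'' in one direction. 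Conversely, when $p=0$ or $p>r$ every $\lambda\in\Lambda^+(d)$ is $p$-regular and every $FS_l$ ($l\le r$) is semisimple, so the only possible off-diagonal entries are the cross-layer ones $[\Delta(\lambda):L(\mu)]$ with $\lambda\neq\mu$.

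Assuming $p=0$ or $p>r$, I would then invoke Theorem \ref{thm determination of dec nos}: such an entry is nonzero for some distinct pair if and only if some distinct $\lambda,\mu\in\Lambda^+(d)$ have a common JM weight. Here I use that decomposition numbers are layer-stable: by the block form of Theorem \ref{thm form of dec matrix} the number $[\Delta(\lambda):L(\mu)]$ may be computed in the smallest $P_m(\delta)$ containing both, where $m=\max(|\lambda|,|\mu|)\le r$, so Theorem \ref{thm determination of dec nos} applies after globalizing. Thus $P_d(\delta)$ is non-semisimple precisely when such a pair exists, and it remains to determine the set of $\delta$ for which it does.

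By Proposition \ref{prop common jm weight implies certain form} a common-JM-weight pair with $\mu$ the larger partition has arrow diagrams agreeing outside one block of columns, in which a single $\bigvee\bigwedge$ is swapped to $\bigwedge\bigvee$ (the action $\sigma$, resp.\ $\tau^{-1}\sigma\tau$ in the half-integer case). Reading off the two labels involved, and using Theorem \ref{thm differ by one box then common CF} together with its half-integer analogue from the proof of Theorem \ref{thm ires of simples}, I would show that such a pair exists exactly when
\[
\delta=|\lambda|+c+2(d-r),
\]
where $c$ is the content of the moved box in $\mu$, the summand $2(d-r)$ being the shift of the size-marker $\bigwedge$ by $\tau$ (Definition \ref{defn of tau}) that distinguishes the half-integer diagram from the integer one. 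As $(\lambda,\mu)$ range over admissible pairs of partitions of size $\le r$ this quantity takes exactly the values $\{2(d-r),\ldots,2(d-1)\}$ in $\Z 1_F$: the upper bound follows from $0\le|\lambda|\le|\mu|-1\le r-1$ and $c\le|\mu|-1\le r-1$ (the strict inequality $|\lambda|<|\mu|$ being what prevents $\delta$ from reaching $2d-1$), while the lower bound follows because the moved box sits in $\lambda$ at content $\delta-|\mu|\ge-(|\lambda|+1)$, so that $\delta\ge|\mu|-|\lambda|-1\ge0$. Surjectivity onto the whole range is obtained from explicit pairs, e.g.\ $(\emptyset,(m))$ for $1\le m\le r$ covering the lower half and hooks $\mu=(c+1,1^{\,r-c-1})$ with $\lambda=\mu-\epsilon$ covering the upper half. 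Combining the paragraphs gives that $P_d(\delta)$ is semisimple iff $\delta\notin\{2(d-r),\ldots,2(d-1)\}$ and ($p=0$ or $p>r$).

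The main obstacle is this last combinatorial step: one must verify that $|\lambda|+c+2(d-r)$ hits every integer of the stated interval and none outside it, which requires care about the size constraints $|\lambda|<|\mu|\le r$ at the two endpoints. This is precisely where a naive transfer from $P_{r+1}(\delta+1)$ via the Morita equivalence of Theorem \ref{thm morita equiv of half with full} breaks down, since that transfer ignores the bound $|\mu|\le r$ and would spuriously adjoin the values $\delta=-1,0$ to the half-integer bad set; handling the endpoints by the direct content bounds above avoids this. A secondary subtlety is the parameter $\delta=0$, where the stratification degenerates and $\Delta(\emptyset)$ must be excluded, so the reduction to decomposition numbers should be carried out on $P_d(0)/J_0$ in that case.
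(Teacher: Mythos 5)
Your route is genuinely different from the paper's: the paper imports non-semisimplicity on the bad set from Martin--Saleur and Halverson--Ram (plus a separate tableau computation for the half-integer case $\delta=1$), reduces modulo $p$, and proves the converse by directly bounding JM eigenvalues along a pair of equal-weight tableaux, whereas you derive both directions from Theorem \ref{thm determination of dec nos} and Proposition \ref{prop common jm weight implies certain form} via the single equation $\delta=|\lambda|+c+2(d-r)$ and a range computation. For $\delta\neq 0$ your argument is correct, including the endpoint bounds and the two exhibiting families (note only that the hook family degenerates at $c=0$, where the box at the end of the first row is not removable, but that value of $\delta$ is already covered by the other family). However, there is a genuine gap at $\delta=0$ in the integer case $d=r$: there $0$ lies in the bad set, so non-semisimplicity of $P_r(0)$ must be \emph{proved}, and your framework cannot see it.

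At $\delta=0$ your foundational equivalence ``semisimple iff every cell module is simple iff the decomposition matrix is the identity'' fails under the paper's conventions, because the label $\emptyset$ is excluded (Theorems \ref{thm classification of simples} and \ref{thm form of dec matrix}) and the non-semisimplicity of $P_r(0)$ is caused by the bottom stratum, not by decomposition numbers. Concretely, $P_1(0)=F\cdot 1\oplus F\cdot A_1$ with $A_1^2=\delta A_1=0$ is local and non-semisimple, yet its unique cell module $\Delta((1))$ is simple, so your criterion returns ``semisimple'' --- the wrong answer; moreover your exhibiting pair for $\delta=0$ is $(\emptyset,(1))$, which is exactly the excluded pair. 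The suggested repair of working in $P_d(0)/J_0$ points the wrong way: semisimplicity of a quotient never implies semisimplicity of the algebra, and $P_r(0)$ is non-semisimple for every $r\geq 1$ precisely because $J_0\neq 0$ and $J_0^2=0$ (any product of two diagrams with no propagating lines acquires a factor $\delta$), which is exactly the phenomenon the quotient destroys. The fix is cheap: dispose of $d=r$, $\delta=0$ by this nilpotent-ideal observation (for $r\geq 2$ the pair $((1),(1,1))$, satisfying $0=1+(-1)$, also works via Theorem \ref{thm differ by one box then common CF}, but nothing does for $r=1$). Symmetrically, in the half-integer case with $\delta=0$, which must come out \emph{semisimple}, the implication ``identity decomposition matrix $\Rightarrow$ semisimple'' additionally requires the Gram form on the bottom cell module $\Delta_{r+1/2}(\emptyset)$ to be nonzero; this is true (the diagram with all dots in a single block is an idempotent in the bottom layer), but it is a statement about the form, not about composition factors, and it has to be said. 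Finally, a side remark: your diagnosis that transfer through Theorem \ref{thm morita equiv of half with full} fails because it ``ignores the bound $|\mu|\le r$'' is not right --- a Morita equivalence transfers semisimplicity unconditionally; the actual obstruction is the direction of the parameter shift in that statement.
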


\begin{proof} 
In the full integer case over a field of characteristic $0$, this follows from \cite{MartinSaleur}. In the half-integer case over $\C$  for $\delta \geq 2$, this was shown in Theorem 3.37 of \cite{HalversonRam}. Upon reduction modulo $p$ the algebra will, of course, remain non-semisimple if it was in characteristic $0$. Furthermore, if $p \leq r$, then the symmetric group algebra $FS_r$, which is a quotient of $P_d(\delta)$, will be non-semisimple. Thus it suffices to show that firstly the partition algebra is semisimple if $\delta$ is not in the prime subfield of $F$ or $\delta \notin \{2(d-r),\ldots,2(d-1) \}$ for both the cases $p=0$ and $p>r$, and secondly that the partition algebra is not semisimple if we are in the half-integer case with $p=0$ and $\delta=1$.

If $d=r+1/2$, $p=0$ and $\delta=1$, then $[\Delta(1^{r-1}):L(1^r)]=1$ by Theorem \ref{thm determination of dec nos}: We choose any $(1^r)$-tableau and extend it to tableaux $u$ and $t$ over $P_{r+1/2}(\delta)$ by adding the partitions $(1^r)$ and $(1^{r-1})$, respectively, at the end. Thus the last entries of the weight of $u$ and $t$ are $r$ and $\delta-(-r+1)=r$ so that $u$ and $t$ have the same weight. 

Now assume that $\delta$ is either not in the prime subfield or $\delta \notin \{2(d-r),\ldots,2(d-1) \}$.  $P_d(\delta)$ is semisimple if and only if all cell modules are irreducible. Assume for contradiction that $P_d(\delta)$ is not semisimple. Then there is some cell module $\Delta(\lambda)$ which has a composition factor $L(\mu)$ with $\mu \neq \lambda$. By Theorem \ref{thm form of dec matrix}, we may assume that $r=|\mu|$ and $|\lambda| < |\mu|$. Furthermore, by Theorem \ref{thm determination of dec nos} we know that $\lambda$ and $\mu$ have a common JM weight. Let $u$ and $t$ be  $\mu$- and $\lambda$-tableau, respectively, of the same weight. Since $u \neq t$, there is $k \in 1/2 \Z$ which is minimal such that $u^k \neq t^k$.  

Start with the case $k \in \N$. Then    $t^{k-1/2}=t^k$  and there is a box $\epsilon$ added to $u^k$. Since $u$ and $t$ have the same weight, we deduce from Theorem \ref{thm eigenvals of JM action} that   $\delta=c(\epsilon)+|t^{k}|$. If $\delta$ is not in the prime subfield then we get a contradiction straight away, since $c(\epsilon)$ and $|t^k|$ certainly are. And $c(\epsilon)$ must be in the range of $|u^{k-1/2}|=|t^{k-1/2}|$ (last box in the first row) and $-|u^{k-1/2}|=-|t^{k-1/2}|$ (last box in first column) so that $\delta$ is in the range of $2|t^{k-1/2}|$ and $0$. Since $|t^{k-1/2}| < |u^k| \leq r$, we deduce that if $P_d(\delta)$ is not semisimple then $\delta$ is in $\{0,\ldots,2r-2\}$ -- a contradiction unless $\delta=0$ and we are in the half-integer case and $\delta=0$. In this case, however, we know that the tableaux cannot end at step $k$. At step $k+1/2$, we must have $u^{k+1/2}=u^k$ (since $\mu$ is a partition of $r$) and $t^{k+1/2}=t^k-\epsilon'$ for some removable box $\epsilon'$. The latter follows since $L_{k+1/2}$ would act by $|t^{k}|$ on $t$ and $|u^k|$ on $u$ which implies $|u^k|=|t^k|$ since $u$ and $t$ have the same weight. Thus $\delta=|u^k|+c(\epsilon')=|t^{k-1/2}|+1+c(\epsilon')$. Since $\epsilon'$ is removable from $t^k=t^{k-1/2}$, its content is in the range $|t^{k-1/2}|-1$ and $-|t^{k-1/2}|+1$ so that $\delta$ is in the range $2|t^{k-1/2}|$ and $2$ -- contradiction to the fact that $\delta=0$. 

In the case $k \in 1/2\N$ one obtains that $\delta=|u^{k-1/2}|+c(\epsilon)$ where $\epsilon$ is a removable box of $t^{k-1/2}$ and hence in the range between $|t^{k-1/2}|-1$ and $-|t^{k-1/2}|+1$. Thus, $\delta$ must be between $2|t^{k-1/2}|-1$ and $1$ and since $|t^{k-1/2}| \leq r$, we get the required contradiction if $d=r+1/2$. By a similar argument as before, in the case $d=r$ and $\delta=2r-1$ there must be an additional step in the tableaux and we get $\delta=c(\epsilon') + |t^{k-1/2}|-1$ where $\epsilon'$ is  addable to $u^{k-1/2}$. Thus $\delta$ must be between $2|t^{k-1/2}|-2 \leq 2r-2$ and $0$ contradicting $\delta=2r-1$. 
\end{proof}

\section{The blocks of the partition algebra}
We will use the results from the previous sections to describe the blocks of the partition algebra over an arbitrary field thus recovering the results of \cite{BDK}. The ultimate goal is to use this description to categorify the blocks as weight spaces of an (affine) Weyl group action in the next section. If the parameter is an integer, then the partition algebra can be defined over the integers and we can employ the machinery of $p$-modular systems to reduce modulo $p$. Thus, if two simple modules $L(\lambda)$ and $L(\mu)$ are in the same block of $P_d(\delta+kp)$ in characteristic $0$ for some $k \in \Z$, then they are also in the same block over the partition algebra $P_d(\delta)$ over a field of characteristic $p$.   If we add to these block relations induced from characteristic $0$ the block relations coming from the symmetric group in characteristic $p$, then we get a sufficient condition for two modules to be contained in the same block. 

It  turns out that this condition is also necessary. In order to show this we use
a necessary condition for two simple modules to be in the same block given by Mathas \cite{Mathaspaper}. He shows in the more general context of cellular algebras which admit JM elements that two simple modules are in the same block only if they are in the same residue linkage class. Residue linkage is just the transitive closure of the property of having a common JM weight so that we have good control over the residue linkage classes. 

We will start with the definition of residue linkage:

\begin{defn}[\cite{Mathaspaper}]
Define an equivalence relation $\sim$ on $\Lambda^+(d)$ by taking the transitive closure of the following symmetric, reflexive relation: $\lambda \sim \theta$ if and only if $\lambda$ and $\theta$ have a common JM weight.
The equivalence classes are called residue linkage classes.
\end{defn}

Thus, we can state the necessary condition:

\begin{theorem}[Corollary 4.6 in \cite{Mathaspaper}] 
\label{thm necess block condition mathas}
Consider the partition algebra $P_d(\delta)$ over a field of characteristic $p>0$. Let $\lambda,\theta \in \Lambda^+(d)$ and suppose that $L(\lambda)$ and $L(\theta)$ are in the same block. Then  $\lambda \sim \theta$, that is $\lambda$ and $\theta$ are in the same residue linkage class.
\end{theorem}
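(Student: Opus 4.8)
The plan is to recognize Theorem~\ref{thm necess block condition mathas} as the instance for $P_d(\delta)$ of Mathas' general block criterion \cite{Mathaspaper} for cellular algebras equipped with Jucys--Murphy elements, and to unwind what that criterion amounts to in our situation. The underlying mechanism is the standard factorisation of the Cartan matrix of a cellular algebra: projective indecomposables carry cell-module filtrations and BGG reciprocity holds, so with $D$ the decomposition matrix of $P_d(\delta)$ the Cartan matrix is $C=D^{\mathrm{T}}D$, giving $[P(\lambda):L(\theta)]=\sum_{\nu}[\Delta(\nu):L(\lambda)]\,[\Delta(\nu):L(\theta)]$. Consequently $L(\lambda)$ and $L(\theta)$ lie in the same block if and only if they are connected in the graph on the simple modules whose edges join two simples that are simultaneously composition factors of a common cell module $\Delta(\nu)$.

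First I would reduce to a single edge of this graph. Along such an edge we have $[\Delta(\nu):L(\lambda)]\neq 0$ and $[\Delta(\nu):L(\theta)]\neq 0$, and Proposition~\ref{prop same cf means same wt} then gives $\lambda\sim\nu$ and $\theta\sim\nu$, whence $\lambda\sim\theta$. Since $\sim$ is by definition the transitive closure of the relation ``having a common JM weight'', walking along a path in the connectivity graph and applying this at each edge shows that membership in the same block forces $\lambda\sim\theta$. This is precisely the content of Corollary~4.6 of \cite{Mathaspaper}, once one knows that the chosen elements are Jucys--Murphy elements in Mathas' axiomatic sense.

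To invoke \cite{Mathaspaper} legitimately I would verify the two hypotheses. Cellularity of $P_d(\delta)$ is due to Xi \cite{Xi99} and also follows from the cellular stratification recalled above. That $L_{1/2},L_1,\ldots,L_d$ form a family of Jucys--Murphy elements requires, besides commutativity (Theorem~\ref{thm eigenvals of JM action}(i)), that they act \emph{triangularly} on a cellular basis with the residue of a tableau $t$ given by its weight $\wt(t)$; in the semisimple case this action is even diagonal by Theorem~\ref{thm eigenvals of JM action}(ii). The main obstacle is exactly this triangularity over a field of characteristic $p>0$, where the $L_k$ need not act semisimply and Theorem~\ref{thm eigenvals of JM action}(ii) no longer applies verbatim. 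I would clear it by the usual specialisation argument: build the seminormal cellular basis over the generic (semisimple) algebra, where the $L_k$ are diagonalised with eigenvalues $\wt(t)$, record the resulting triangular action over an integral form, and reduce modulo $p$; the seminormal forms of Enyang \cite{Enyang13} make this reduction explicit and confirm that Mathas' axioms survive specialisation. With the axioms in place the abstract result applies and yields the theorem. I note that, alternatively, the argument of the first two paragraphs is already self-contained given Proposition~\ref{prop same cf means same wt} together with the cellular Cartan factorisation, so the appeal to \cite{Mathaspaper} can be viewed as convenient packaging rather than an essential ingredient.
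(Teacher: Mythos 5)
Your proposal is correct, but its primary argument takes a genuinely different route from the paper's. The paper's proof is pure hypothesis-verification: it picks a $p$-modular system $(K,R,k)$, takes as integral form the partition algebra $P_d(\widehat{\delta}+2dp)$ over $R$, where the lift of $\delta$ is deliberately shifted by $2dp$ so that Theorem \ref{thm semisimplicity} guarantees that $A_K$ is semisimple and hence that the JM elements separate $A_K$, and then invokes Corollary 4.6 of \cite{Mathaspaper} as a black box. You instead unwind what that corollary does: blocks of a cellular algebra are the connected components of the graph whose edges join two simple modules occurring in a common cell module (via the Graham--Lehrer factorisation $C=D^{\mathrm{T}}D$ of the Cartan matrix \cite{GrahamLehrer}), and along each edge Proposition \ref{prop same cf means same wt} gives $\lambda\sim\nu$ and $\nu\sim\theta$, so transitivity of $\sim$ finishes the argument. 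This is a valid proof, and, as you observe, it makes the appeal to \cite{Mathaspaper} logically superfluous: everything is carried by Proposition \ref{prop same cf means same wt} (which the paper imports from \cite{JMpaper}) together with standard cellular algebra theory. Your route buys self-containedness, with no modular system and no parameter lift; the paper's route buys brevity and reuses the reduction-mod-$p$ device that also appears in Theorem \ref{thm differ by one box then common CF} and Lemma \ref{lem graphical block relations}.

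One caution about your third paragraph: the verification of Mathas' axioms sketched there is vaguer than the paper's and misses its one delicate point. Seminormal bases live over the fraction field of a generic, semisimple algebra, and to specialise to $P_d(\delta)$ in characteristic $p$ one must first fix an integer value of the parameter congruent to $\delta$ modulo $p$; an arbitrary lift $\widehat{\delta}\in\{0,\ldots,p-1\}$ may lie in the critical set $\{2(d-r),\ldots,2(d-1)\}$, where the characteristic-zero algebra fails to be semisimple and the JM elements need not separate it. This is exactly why the paper works with $P_d(\widehat{\delta}+2dp)$. Since you present the Mathas route as optional packaging rather than as the main proof, this does not affect the correctness of your proposal, but the third paragraph as written would not stand on its own.
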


\begin{proof}
We only have to show that the partition algebra satisfies the assumptions of Corollary 4.6 in \cite{Mathaspaper}. For this, we pick a minimal lift $\widehat{\delta} \in \{0,\ldots,p-1\}$ of $\delta$ (that is, reducing $\widehat{\delta}$ modulo $p$ gives $\delta$) and $(K,R,k)$ a $p$-modular system, that is  $K$ is the field of fractions of the commutative ring $R$ and $k$ is the residue field $R/\Pi$ where $\Pi$ is the unique maximal ideal of $R$. The characteristic of $K$ is $0$ and the characteristic of $k$ is $p$. Let $A_R$ be the partition algebra $P_d(\widehat{\delta}+2d*p)$, defined over $R$. Then $A_K$ is semisimple by Theorem \ref{thm semisimplicity} and hence the JM elements separate $A_K$, as required.
\end{proof}

We will now give a necessary condition for two partition to be in the same linkage class which follows immediately from Proposition \ref{prop common weight over arbitr p means same no of arrows}.

\begin{prop} \label{prop same linkage class implies same no of arrows} Suppose $\lambda, \theta \in \Lambda^+(d)$ are in the same residue linkage class. Then for all $k$ the $k$th columns of
\begin{enumerate}[(a)]
\item $d(\lambda)$ and $d(\theta)$ (if $d \in \N$)
\item  $\tau d(\lambda)$ and $\tau d(\theta)$ (if $d \notin \N$)
\end{enumerate}
contain the same number of arrows (that is, counting $\bigwedge$ and $\bigvee$).
\end{prop}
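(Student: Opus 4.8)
The plan is to observe that the conclusion of the proposition defines an equivalence relation on $\Lambda^+(d)$, and that Proposition \ref{prop common weight over arbitr p means same no of arrows} already establishes this relation for the single generating step of residue linkage; transitivity will then take care of the rest. The point is purely formal: all of the genuine combinatorics has been carried out in Proposition \ref{prop common weight over arbitr p means same no of arrows}.

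First I would attach to each $\lambda \in \Lambda^+(d)$ its \emph{column-count vector}. For $d \in \N$ let $N(\lambda)=(N_k(\lambda))_k$, where $N_k(\lambda)$ is the number of arrows (counting both $\bigwedge$ and $\bigvee$) in the $k$th column of $d(\lambda)$; for $d \notin \N$ let $N(\lambda)$ be the same tuple computed from $\tau d(\lambda)$ instead. With this notation the assertion of the proposition is precisely that $N(\lambda)=N(\theta)$ whenever $\lambda \sim \theta$. Now define a relation by $\lambda \approx \theta$ if and only if $N(\lambda)=N(\theta)$. This is the pullback of equality along the map $\lambda \mapsto N(\lambda)$ and is therefore reflexive, symmetric and transitive. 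Note that $N$ is defined with exactly the same case distinction ($d(\lambda)$ for $d \in \N$, $\tau d(\lambda)$ for $d \notin \N$) that appears in Proposition \ref{prop common weight over arbitr p means same no of arrows}, so the two cases are automatically handled consistently and no mismatch can arise.

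Next I would invoke Proposition \ref{prop common weight over arbitr p means same no of arrows} directly: if $\lambda$ and $\theta$ have a common JM weight, then the $k$th columns of the relevant diagrams contain the same number of arrows for all $k$, i.e. $N(\lambda)=N(\theta)$, so $\lambda \approx \theta$. Thus the generating relation of $\sim$ (having a common JM weight) is contained in $\approx$. Since $\sim$ is by definition the transitive closure of that generating relation and $\approx$ is already an equivalence relation, the transitive closure remains contained in $\approx$. Concretely, a chain $\lambda=\nu^{(0)},\nu^{(1)},\ldots,\nu^{(m)}=\theta$ in which consecutive terms share a JM weight yields $N(\lambda)=N(\nu^{(1)})=\cdots=N(\nu^{(m)})=N(\theta)$, which is exactly the claim.

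I do not expect any real obstacle here. The only substantive input is Proposition \ref{prop common weight over arbitr p means same no of arrows}, and the remaining content is the observation that ``having the same column-count vector'' is an equivalence relation and hence closed under forming transitive closures. The single point worth stating explicitly is the consistency of the convention used to define $N$ in the integer and half-integer cases, which is immediate from the matching convention in Proposition \ref{prop common weight over arbitr p means same no of arrows}.
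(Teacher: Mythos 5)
Your proof is correct and is essentially the paper's argument: the paper states this proposition without a written proof, saying it ``follows immediately from Proposition \ref{prop common weight over arbitr p means same no of arrows},'' and your proposal just spells out the routine step that the paper leaves implicit, namely that the equivalence relation ``same column-count vector'' contains the generating relation of residue linkage and hence its transitive closure.
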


For the proof of the main result, we will need a graphical description of the block relations induced from characteristic $0$ and from the symmetric group in characteristic $p$. Recall that $\tau$ moves $\bigwedge$ up by $1$, see Definition \ref{defn of tau}. 

\begin{lem}
\label{lem graphical block relations}
Suppose $\lambda, \theta \in \Lambda^+(d)$ and that $d(\lambda)$ is obtained from $d(\theta)$ by one of the following operations:
\begin{enumerate}[(a)]
\item (symmetric group block relations)   By moving an arrow up or down by one within a column and then moving an arrow down or up, respectively, in a column. 
\item (full integer block relation) By moving a $\bigwedge$ in column $i$ to an arbitrary position in column $j$ and a $\bigwedge$ from column $j$ to column $i$ (valid if $d \in \N$).
\item (half integer block relation) By applying $\tau$, then performing operation $(b)$ and then applying $\tau \inv$ (valid if $d \notin \N$).
\end{enumerate}
Then $L(\lambda)$ and $L(\theta)$ are in the same block.
\end{lem}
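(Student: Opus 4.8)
The plan is to recognise each of the three operations as an instance of a block relation already available to us and then invoke the appropriate transfer principle. Operation $(a)$ is the $p$-modular block relation of the symmetric group, while $(b)$ and $(c)$ are characteristic $0$ block relations of the partition algebra pulled back to characteristic $p$ by reduction modulo $p$, with $(c)$ deduced from $(b)$ through the Morita equivalence realised by $\tau$.

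For $(a)$ I would first note that sliding one $\bigvee$ up and another $\bigvee$ down by one within their columns changes neither the number of $\bigvee$'s on any runner nor the value of $|\lambda|$ (the two shifts by $\pm p$ cancel, and the $\bigwedge$ is untouched). Thus $\lambda$ and $\theta$ are partitions of a common $n \le r$ with the same $p$-core, so $S(\lambda)$ and $S(\theta)$ lie in one $p$-block of $FS_n$. Choosing a chain $\lambda=\nu_0,\dots,\nu_m=\theta$ of partitions of $n$ in which consecutive Specht modules $S(\nu_t)$ and $S(\nu_{t+1})$ share a composition factor, and using the block shape of the decomposition matrix in Theorem~\ref{thm form of dec matrix}, which contains $D_{FS_n}$ as a diagonal block, I promote this to a chain of cell modules $\Delta(\nu_t)$ with common composition factors. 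Hence $\Delta(\lambda)$ and $\Delta(\theta)$, and therefore $L(\lambda)$ and $L(\theta)$, lie in the same block of $P_d(\delta)$.

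For $(b)$ I would pass to characteristic $0$. There the block relation is controlled by Theorem~\ref{thm lie theoretic det of dec nos} (equivalently, by iterating the adjacent swap $\sigma$ of Definition~\ref{defn sigma action of arrow diag} together with Theorem~\ref{thm differ by one box then common CF}): two arrow diagrams lie in the same block of $P_d(\widehat\delta)$ over $\C$ whenever the $\bigwedge$ and the $\bigvee$'s occupy the same set of positions, since $\sigma$ moves the $\bigwedge$ past a neighbouring $\bigvee$ by a step with nonzero decomposition number and iterates to place it at any $\bigvee$-position. I would then take the integer lift $\widehat\delta=c+|\theta|$, where $c$ is the content of the $\bigvee$ that operation $(b)$ moves into column $i$; since the $\bigwedge$ of $\theta$ lies in column $i\equiv\delta-|\theta|\pmod p$ we get $\widehat\delta\equiv\delta\pmod p$, and with this choice one finds that the characteristic $0$ diagrams of $\lambda$ and $\theta$ for $P_d(\widehat\delta)$ share the same occupied-position set. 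Thus $\lambda$ and $\theta$ lie in one block of $P_d(\widehat\delta)$ over $\C$, and the reduction-modulo-$p$ argument recalled at the start of this section gives that $L(\lambda)$ and $L(\theta)$ lie in the same block of $P_d(\delta)$.

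Operation $(c)$ then follows formally: by Theorem~\ref{thm morita equiv of half with full} the half integer algebra is Morita equivalent to a full integer one, and as observed after Definition~\ref{defn of tau} this equivalence is realised graphically by $\tau$; a Morita equivalence induces a bijection on blocks, so conjugating the full integer relation $(b)$ by $\tau$ yields exactly operation $(c)$. The main obstacle I anticipate is the verification in $(b)$ that a single lift $\widehat\delta$ makes \emph{both} displaced arrows land on matching positions in characteristic $0$, i.e.\ that the heights permitted by the phrase ``arbitrary position in column $j$'' are precisely those needed for the two occupied-position sets to coincide before reduction. Should the heights fail to align directly, I would first normalise them by the symmetric-group moves of part $(a)$, which alter neither the block nor the columns involved, and only then run the characteristic $0$ argument.
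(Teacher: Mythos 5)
Your proposal is correct and follows essentially the same route as the paper's proof: part $(a)$ via equality of size and $p$-core together with the embedding of the symmetric group decomposition matrix (Theorem \ref{thm form of dec matrix}), and parts $(b)$/$(c)$ via the characteristic $0$ swap $\sigma$ of Definition \ref{defn sigma action of arrow diag} (justified by Theorem \ref{thm determination of dec nos} and Proposition \ref{prop common jm weight implies certain form}), an integer lift of $\delta$, and reduction modulo $p$, with the half integer case handled through $\tau$. Your explicit lift $\widehat\delta=c+|\theta|$ just makes precise what the paper does pictorially (``by an appropriate choice of lift of $\delta$''), and your appeal to the Morita equivalence of Theorem \ref{thm morita equiv of half with full} for $(c)$ matches the paper's observation that $\tau$ is its graphical realization.
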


\begin{proof}
Notice that the operation in Part $(a)$ simply removes a $p$-hook from a partition and then adds another $p$-hook so the initial and resulting partition have the same size and $p$-core. Thus the symmetric group simple modules $D(\lambda)$ and $D(\theta)$ are in the same block of the symmetric group of degree $|\lambda|=|\theta|$ and therefore also in the same block of the partition algebra by Theorem \ref{thm form of dec matrix}.

For Part $(b)$ and $(c)$, recall the definition of $\sigma$ from Definition \ref{defn sigma action of arrow diag}. It follows from Theorem \ref{thm determination of dec nos} and Proposition \ref{prop common jm weight implies certain form} that if $d \in \N$ and $\sigma d(\lambda)$ is non-zero, then $d(\lambda)$ and $\sigma d(\lambda)$ are in the same block. Similarly, if $d \notin \N$ and $\sigma \tau d(\lambda)$ is non-zero, then $d(\lambda)$ and $\tau \inv \sigma \tau d(\lambda)$ are in the same block. Composing the operation of swapping $\bigvee$ and $\bigwedge$ pairs and its inverse, we can deduce that changing $\bigvee \bigwedge$ pairs to $\bigwedge \bigvee$ pairs (and vice versa) preserves blocks even if the pairs are not necessarily neighbours, that is, might be separated by an arbitrary number of $\bigcirc$ and $\bigvee$, provided $d \in \N$. Analogously, we can for $d \notin \N$ apply $\tau$, change an arbitrary $\bigvee \bigwedge$ pair to a $\bigwedge \bigvee$ pair (and vice versa) and apply $\tau$. The resulting arrow diagram will still belong to the same block as the initial arrow diagram.

If two modules are in the same block in characteristic $0$, then their modular reductions are still in the same block in characteristic $p$. Let us study what the operations above imply for the abacus.  Suppose we have a $\bigvee$ in column $i$ of some runner of the abacus and a $\bigwedge$ in column $j$ (recall that columns are labelled  modulo $p$ with labels determined by the contents of the $\bigvee$ contained in the column). This is best illustrated graphically, see Figure  \ref{figure swap vee wedge pair on abacus}. 
\begin{figure}
\caption{Swapping a $\bigvee \bigwedge$ pair on the abacus.}
\label{figure swap vee wedge pair on abacus}
\subfigure[]{\hspace{0.5cm}\includegraphics[scale=0.17]{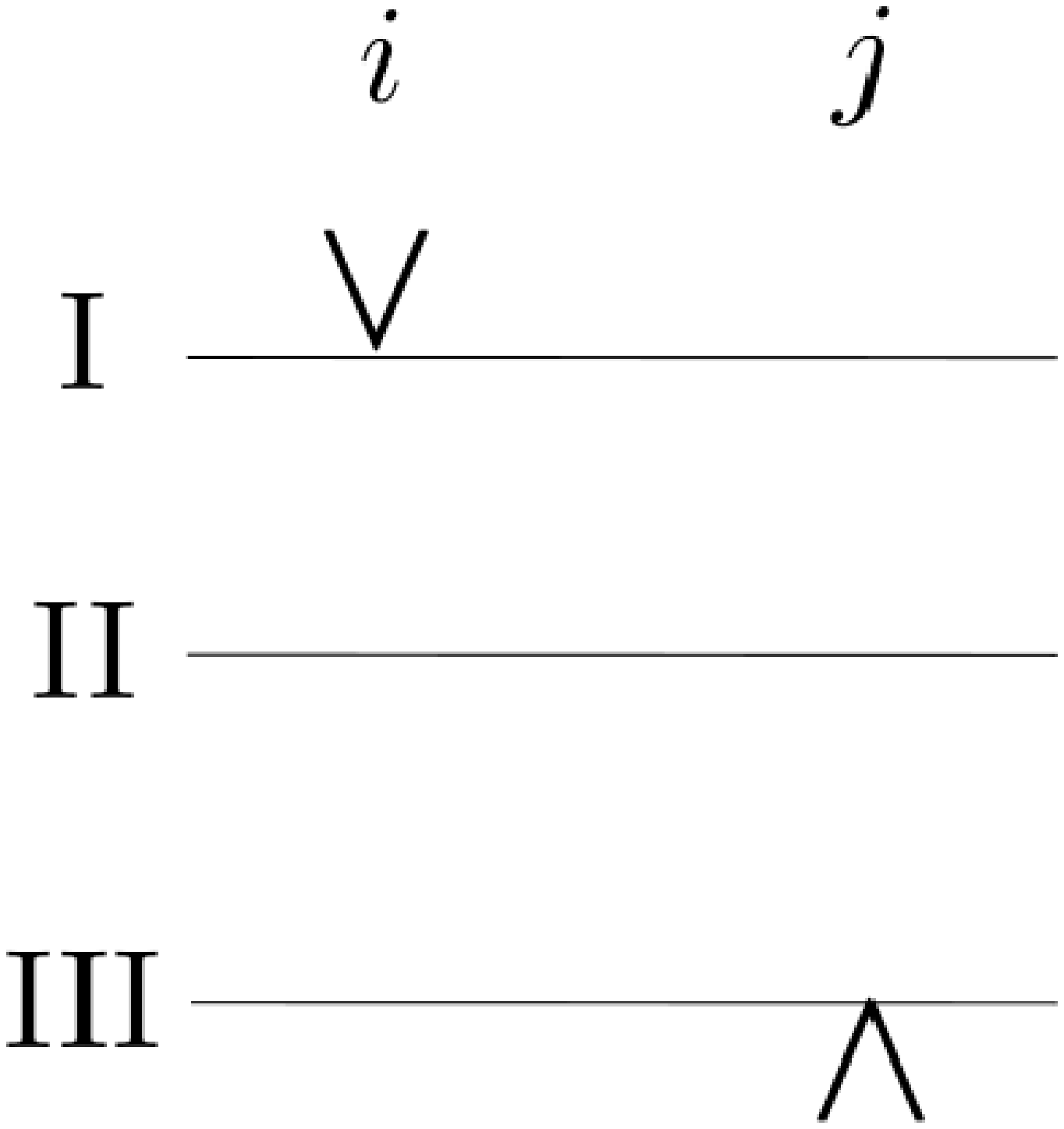}\hspace{0.5cm}}
\subfigure[]{\hspace{0.5cm}\includegraphics[scale=0.17]{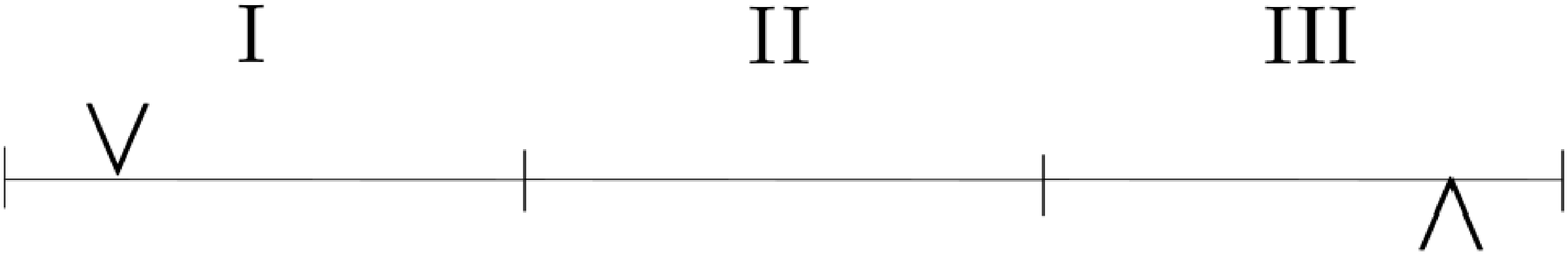}\hspace{0.5cm}}
\subfigure[  ]{\hspace{0.5cm}\includegraphics[scale=0.17]{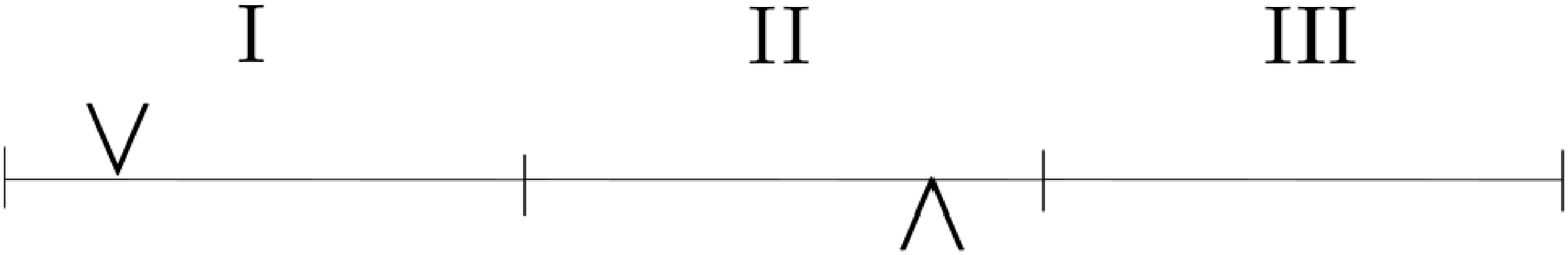}\hspace{0.5cm}}
\subfigure[  ]{\hspace{0.5cm}\includegraphics[scale=0.17]{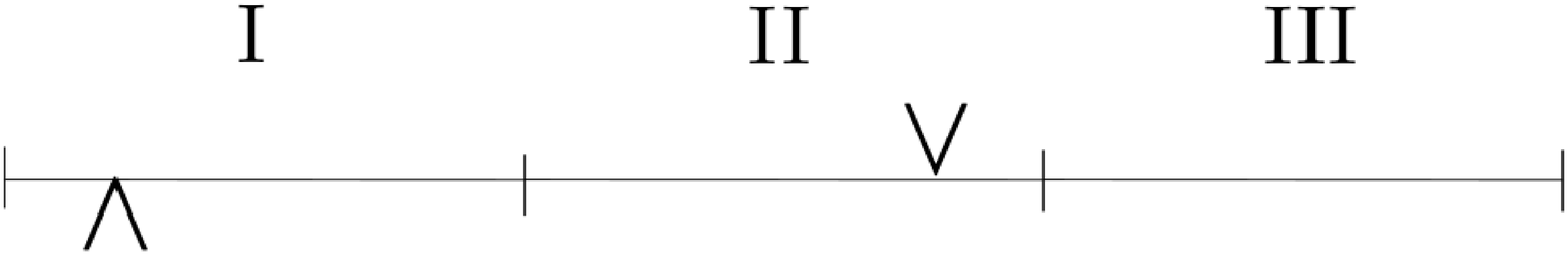}\hspace{0.5cm}}
\subfigure[  ]{\hspace{0.5cm}\includegraphics[scale=0.17]{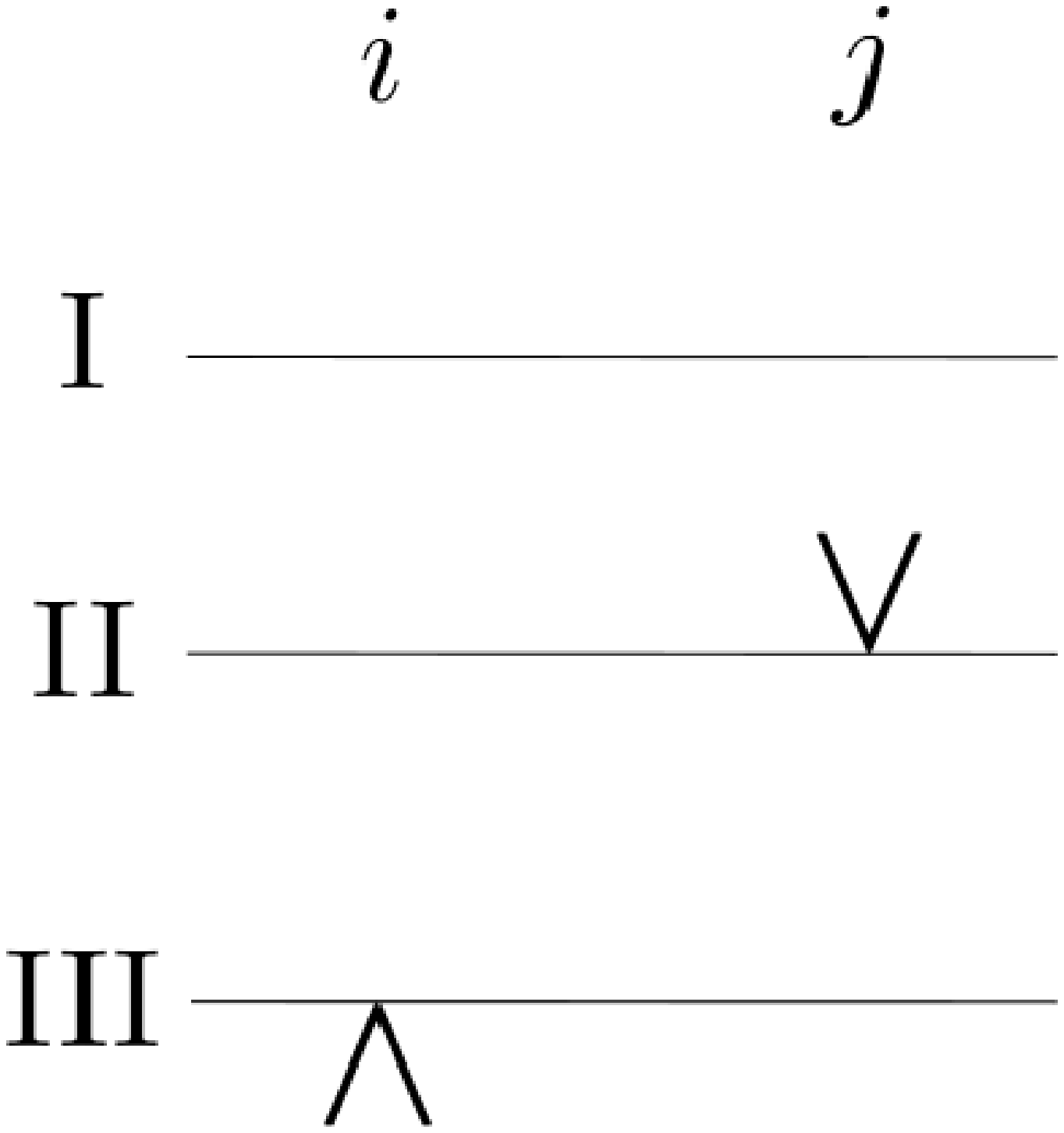}\hspace{0.5cm}}
\end{figure}

The initial situation is depicted in $(a)$ where we want to move the $\bigvee$ on the runner labelled I to column $j$ on the runner labelled II, say. If we draw the same partition on a characteristic $0$ diagram, then we get $(b)$, where the position of $\bigwedge$ depends on our choice of lift of the parameter. By an appropriate choice of lift of $\delta$, we can move $\bigwedge$ by $p$ steps to the left to get $(c)$. Now, we can change the $\bigvee \bigwedge$ pair to get $(d)$ and convert this back into a characteristic $p$ abacus diagram resulting in $(e)$ which is precisely of the claimed form. The general case follows completely analogously, as does the case when $d \notin \N$. 
\end{proof}

Thus, we can prove a full characterization of blocks:

\begin{theorem}
\label{thm determination of blocks}
Consider the partition algebra $P_d(\delta)$ over a field of arbitrary characteristic and let $\lambda, \theta \in \Lambda^+(d)$. Then the following are equivalent:
\begin{enumerate}[(a)]
\item $L(\lambda)$ is in the same block as $L(\theta)$.
\item $\lambda$ and $\theta$ are in the same linkage class. 
\item For all $k$, the $k$th columns of 
\begin{enumerate}[(i)]
\item ($d \in \N$) $d(\lambda)$ and $d(\theta)$.
\item ($d \notin \N$)  $ \tau d(\lambda)$ and $\tau d(\theta)$.
\end{enumerate}
contain the same number of arrows.
\end{enumerate}
\end{theorem}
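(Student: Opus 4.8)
The plan is to prove the cycle $(a)\Rightarrow(b)\Rightarrow(c)\Rightarrow(a)$. The first two implications are already in hand: $(a)\Rightarrow(b)$ is precisely Mathas' necessary condition, Theorem~\ref{thm necess block condition mathas}, and $(b)\Rightarrow(c)$ is Proposition~\ref{prop same linkage class implies same no of arrows}. All of the work is therefore in $(c)\Rightarrow(a)$, and for this I would use the block-preserving operations collected in Lemma~\ref{lem graphical block relations}: it suffices to show that operations $(a)$ and $(b)$ of that lemma (resp.\ $(a)$ and $(c)$ in the half-integer case) act transitively on the set of arrow diagrams having a prescribed number of arrows in each column.

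First I would dispose of the half-integer case $d\notin\N$ by reducing it to $d\in\N$. There condition $(c)$ is phrased for $\tau d(\lambda)$ and $\tau d(\theta)$, and $\tau$ is the graphical shadow of the Morita equivalence of Theorem~\ref{thm morita equiv of half with full}, which carries blocks to blocks. Since the half-integer relation $(c)$ of Lemma~\ref{lem graphical block relations} is by construction $\tau^{-1}\circ(b)\circ\tau$, while the symmetric-group relation $(a)$ preserves the size and hence the column of $\bigwedge$, so that it is unaffected by conjugation by $\tau$, any chain of integer operations transforming $\tau d(\theta)$ into $\tau d(\lambda)$ conjugates to a chain of half-integer operations transforming $d(\theta)$ into $d(\lambda)$. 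Thus the half-integer statement follows formally from the integer one applied to $\tau d(\lambda)$ and $\tau d(\theta)$.

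So assume $d\in\N$ and that $d(\lambda)$ and $d(\theta)$ carry the same number of arrows in each column. I would first use operation $(b)$ to move the single $\bigwedge$ of $d(\theta)$ into the column occupied by the $\bigwedge$ of $d(\lambda)$, swapping it with a $\bigvee$ of that column (available unless $\bigwedge$ already sits there, since the common column-count there is positive). Once the two $\bigwedge$'s lie in the same column, condition $(c)$ forces $d(\lambda)$ and $d(\theta)$ to carry the same number of $\bigvee$'s in every column, i.e.\ to have the same $p$-core. It then remains to make the two $\bigvee$-configurations coincide while keeping $\bigwedge$ fixed. Here I would exploit the freedom in operation $(b)$ of sending $\bigwedge$ to an \emph{arbitrary} position within the target column: composing one application of $(b)$ with an application of its inverse, but choosing a different runner for the returning $\bigvee$, produces a single $p$-hook move on that $\bigvee$ (a shift by $p$ within its column) that leaves the column of $\bigwedge$ unchanged. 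Such size-changing moves connect all partitions with the given $p$-core, so they let me transform the $\bigvee$-configuration of $d(\theta)$ into that of $d(\lambda)$ without disturbing $\bigwedge$; as each operation keeps $L(-)$ inside a single block, $L(\lambda)$ and $L(\theta)$ are then in the same block.

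The step I expect to be the main obstacle is precisely this last matching of the $\bigvee$-configurations. The position of $\bigwedge$ is not a free parameter but is pinned to $|\lambda|\pmod p$, so the relocation of $\bigwedge$ and the slides of the $\bigvee$'s cannot be carried out independently; one must verify that the composite moves above really realize arbitrary $p$-hook moves, that every intermediate arrow diagram genuinely corresponds to a partition (so that Lemma~\ref{lem graphical block relations} applies to it), and that the small boundary cases --- for instance when the column of $\bigwedge$ contains no spare $\bigvee$ to swap with, or the empty partition when $\delta=0$ --- are treated separately. Making the reduction to a canonical diagram (all $\bigvee$'s pushed as far down as the column-counts permit, with $\bigwedge$ in a fixed column) fully explicit is where the care is needed.
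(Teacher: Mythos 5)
Your proposal is correct and takes essentially the same approach as the paper: the same cycle $(a)\Rightarrow(b)\Rightarrow(c)\Rightarrow(a)$, with $(a)\Rightarrow(b)$ from Theorem \ref{thm necess block condition mathas}, $(b)\Rightarrow(c)$ from Proposition \ref{prop same linkage class implies same no of arrows}, and $(c)\Rightarrow(a)$ obtained by composing the block-preserving moves of Lemma \ref{lem graphical block relations}, with the half-integer case handled via $\tau$-conjugation and the Morita equivalence. The only (cosmetic) difference lies in the order of moves in $(c)\Rightarrow(a)$ --- the paper first reduces both diagrams to $p$-cores (operation $(a)$, then operation $(b)$ with $i=j$) and finishes with a single swap, whereas you first align the two $\bigwedge$'s and then match the $\bigvee$-configurations by $p$-hook moves; the worries in your final paragraph all resolve favourably, since every intermediate bead configuration is a genuine partition and your double application of operation $(b)$ does realize arbitrary $p$-hook moves (as does operation $(b)$ with $i=j$ directly, which is how the paper uses it).
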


\begin{proof} 
We have already shown that $(a)$ implies $(b)$ (Theorem \ref{thm necess block condition mathas}) and that $(b)$ implies $(c)$ (Proposition \ref{prop same linkage class implies same no of arrows}). We will show that $(c)$ implies $(a)$. Suppose $d(\lambda)$ and $d(\theta)$ or $\tau d(\lambda)$ and $\tau d(\theta)$ contain the same number of arrows in each column. We have to show that $L(\lambda)$ and $L(\theta)$ are in the same block. Given $d(\lambda)$ and $d(\theta)$, we can use Part $(a)$ of Lemma \ref{lem graphical block relations} to bring them to the following form: If $\bigwedge$ is in column $j$, then all $\bigvee$ outside of column $j$ are moved as far up as possible and in column $j$ all $\bigvee$ except for one are also moved as far up as possible at the expense of moving the lowest $\bigvee$ in column $j$ further down. Since the resulting partition will label a simple module which is still in the same block as $L(\lambda)$ and $L(\theta)$, we may assume that $d(\lambda)$ and $d(\theta)$ are of this form.

 Suppose $d \in \N$. Then we can use Part $(b)$ of Lemma \ref{lem graphical block relations}, to also move the lowest $\bigvee$ in column $j$ up as far as possible in both $d(\lambda)$ and $d(\mu)$  since Lemma \ref{lem graphical block relations} did not require $i \neq j$. In other words we may assume that $d(\lambda)$ and $d(\theta)$ are $p$-cores. 
 
 Suppose, the columns of $d(\lambda)$ and $d(\mu)$ containing  $\bigwedge$ are $i$ and $j$, respectively.  Then outside of  column $i$ and $j$, the arrow diagrams $d(\lambda)$ and $d(\theta)$ are identical as they contain the same number and same type of arrows in each column, namely only $\bigvee$.  If $i=j$, then we are done since column $i$ will contain the same number of $\bigvee$ in both $d(\lambda)$ and $d(\theta)$ and therefore the $p$-cores are identical. So assume this is not the case. Hence, in column $i$, $d(\lambda)$  contains  one less $\bigvee$ than $d(\lambda)$ (due to the presence of $\bigwedge$) and in column $j$ it contains one more $\bigvee$ than $d(\theta)$. Now again use Part $(b)$ of Lemma \ref{lem graphical block relations} to move the additional $\bigvee$ in column $j$ of $d(\lambda)$ to the furthest up free position of column $j$ and move the $\bigwedge$ from column $i$ to $j$. But this precisely yields $d(\theta)$ so that $\lambda$ and $\theta$ are in the same block, as required.

The case $d \notin \N$ is very similar and can also be obtained via the Morita equivalence in Theorem \ref{thm morita equiv of half with full}.
\end{proof}

We can also translate this description into a Lie theoretic one quite naturally, thus recovering the description of blocks in \cite{BDK} as unions of orbits of the action of an affine Weyl group.  In order to show this, recall the notation after Theorem \ref{thm determination of dec nos}. The affine Weyl group $W_p$ is generated by elements of the form $s_{\alpha,k}$ where $\alpha \in \Phi$ and $k \in \Z$ with action defined by $ s_{\alpha,k}\lambda=\lambda-((\lambda,\alpha)-kp)\alpha$ where $p$ is the characteristic of the field. The corresponding dot action is given as before by $w \circ \lambda=w(\lambda+\rho)-\rho$.

\begin{theorem}
\label{thm lie determination of blocks}
Consider the partition algebra $P_d(\delta)$ over a field of arbitrary characteristic and let $\lambda, \theta \in \Lambda^+(d)$. Then $L(\lambda)$ is in the same block as $L(\theta)$ if and only if $\lambda \in W_p \ \theta$, that is, $\lambda$ and  $\theta$ are contained in the same affine Weyl group orbit under the dot action.
\end{theorem}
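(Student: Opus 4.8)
The plan is to reduce the statement to the combinatorial block criterion of Theorem \ref{thm determination of blocks} and then translate its condition --- that $d(\lambda)$ and $d(\theta)$ (or $\tau d(\lambda)$ and $\tau d(\theta)$ when $d\notin\N$) carry the same number of arrows in each column --- into membership in a single $W_p$-orbit. The bridge is the identification recorded after Theorem \ref{thm determination of dec nos}, namely that $\nu:=\lambda+\rho$ encodes the data of the relevant arrow diagram: its entries $\nu_0,\nu_1,\dots,\nu_r$ are exactly the positions of the $\bigwedge$ and of the $\bigvee$s, so that the number of arrows in the column of residue $c$ equals $\#\{a : \nu_a\equiv c \pmod p\}$. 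Since the dot action satisfies $w\circ\lambda=w(\lambda+\rho)-\rho$, one has $\lambda\in W_p\,\theta$ under the dot action if and only if $\lambda+\rho$ and $\theta+\rho$ lie in one orbit of the ordinary affine action, so it suffices to analyse the latter.

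First I would make the affine action explicit. For a root $\alpha=\epsilon_i-\epsilon_j$ one computes $s_{\alpha,k}\nu=\nu-\bigl((\nu_i-\nu_j)-kp\bigr)\alpha$, which swaps $\nu_i$ and $\nu_j$ and then translates them by a multiple of $p$; composing $s_{\alpha,k}s_{\alpha,0}$ yields precisely the translation by $kp\,\alpha$. Hence $W_p=S_{r+1}\ltimes pQ$, where $S_{r+1}$ permutes the coordinates and $Q=\{\beta\in\Z^{r+1}:\sum_a\beta_a=0\}$ is the root lattice of type $A_r$ (and for $p=0$ the group degenerates to the finite group $S_{r+1}$). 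Consequently the orbit of $\nu$ consists of all vectors obtained by permuting its entries and then adding $p\beta$ for some $\beta\in Q$.

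Next I would prove that $\nu$ and $\nu'$ lie in the same orbit if and only if they have the same coordinate sum and the same multiset of residues modulo $p$. The forward direction is immediate, as permutations and translations by $pQ$ preserve both. For the converse, matching up coordinates of equal residue produces a permutation $w$ with $w\nu\equiv\nu'\pmod p$ entrywise; then $\nu'-w\nu\in p\Z^{r+1}$, say $\nu'-w\nu=p\beta$, and the requirement $\beta\in Q$ is exactly $\sum_a\nu'_a=\sum_a\nu_a$. The crucial point is that this coordinate-sum condition holds automatically: since $\lambda$ and $\theta$ are dominant, $\sum_a\lambda_a=\sum_a\theta_a=0$, whence $\sum_a(\lambda+\rho)_a=\sum_a\rho_a=\sum_a(\theta+\rho)_a$. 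This is the step that makes the whole equivalence go through, and I expect it to be the only genuinely delicate point; everything else is bookkeeping.

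Finally I would assemble the chain of equivalences: by the formula for the arrow counts, ``$\{(\lambda+\rho)_a\}$ and $\{(\theta+\rho)_a\}$ have the same multiset of residues mod $p$'' is literally ``$d(\lambda)$ and $d(\theta)$ have the same number of arrows in every column,'' which by Theorem \ref{thm determination of blocks} is equivalent to $L(\lambda)$ and $L(\theta)$ lying in the same block. For $p=0$ residues are just values and $W_p=S_{r+1}$, recovering the characteristic-zero case. The half-integer case needs no separate argument: the passage from $d(\lambda)$ to $\tau d(\lambda)$ has already been absorbed into the choice $\rho=(0,(1-\delta)-1,(1-\delta)-2,\dots)$, so that $\nu=\lambda+\rho$ encodes $\tau d(\lambda)$ and the identical computation applies verbatim.
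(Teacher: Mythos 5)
Your proposal is correct, but it takes a genuinely different route from the paper. The paper never computes the $W_p$-orbits directly: instead it identifies the \emph{generators} of the dot action with the graphical moves of Lemma \ref{lem graphical block relations} (via the observation, made in the proof of Theorem \ref{thm lie theoretic det of dec nos}, that the transpositions $(1,i)$ for the parameters $\delta+kp$ act as the $\sigma$-operation), concluding that the $W_p$-action preserves blocks; for the converse it then re-inspects the \emph{proof} of Theorem \ref{thm determination of blocks} and notes that it only ever used those same moves, so two labels in one block are connected by a chain of them. You instead use only the \emph{statement} of Theorem \ref{thm determination of blocks} (condition $(c)$: equal arrow counts in every column) and replace the inspection of its proof by a self-contained orbit classification: $W_p\cong S_{r+1}\ltimes pQ$, orbits on the weight lattice are cut out by the multiset of residues modulo $p$ together with the coordinate sum, and the coordinate-sum condition is automatic because dominant weights sum to zero --- you correctly flag this as the delicate point, and your verification of it is sound. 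What your approach buys is independence from the internal structure of the earlier proof and from the (unproved, slightly delicate) assertion in the paper that the affine action ``is generated by the Weyl group actions for $P_d(\delta+kp)$''; what the paper's approach buys is brevity and a conceptual explanation of why the same moves govern both blocks and the Weyl group. One bookkeeping point you should make explicit: the full arrow diagrams carry infinitely many $\bigvee$s (one for each zero part), so ``same number of arrows in each column'' must be compared after discarding the common tail; since $\lambda$ and $\theta$ both have at most $r$ parts, their tails coincide and the truncated data $\nu_0,\dots,\nu_r$ indeed captures the condition, but as written your count $\#\{a:\nu_a\equiv c \pmod p\}$ silently presupposes this truncation.
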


\begin{proof} 
The affine Weyl group action is generated by the Weyl group actions for $P_d(\delta+kp)$ with $k \in \Z$. Each Weyl group is isomorphic to the symmetric group $S_{r+1}$ which is generated by elements of the form $(1,i)$ for $i=2,\ldots,r+1$. The action of $(1,i)$ was shown to be equivalent to the action of $\sigma$ in the proof of Theorem \ref{thm lie theoretic det of dec nos}. Therefore, the action of the affine Weyl group is generated precisely by the operations in Lemma \ref{lem graphical block relations}. In particular, it preserves blocks. Furthermore, the proof of Theorem 5.5 only used the relations given by Lemma \ref{lem graphical block relations}. Thus, if $L(\lambda)$ is in the same block as $L(\theta)$, we can use these relations (that is affine Weyl group actions) to convert $\lambda$ into $\theta$, as required.
\end{proof}

\section{A quantum group action on Fock space}
In this section we will show that we can define an analogue of the Fock space with a natural action of a quantum group of type $A$ in such a way that the weight spaces of the Grothendieck group of this Fock space are precisely the blocks of the partition algebra. The action of a quantum group will be similar in spirit to the case of the Fock space for Hecke algebras of type $A$ and will, in particular, be derived from the action of $i$-induction and $i$-restriction functors. There are, however, two notable differences to the classical setting. Firstly, because of the occurrence of half integer partition algebras, we will have to change the parameter $\delta$ after the action of the functors. This will require to move between partition algebras with different parameters and the definition of the Fock space is adjusted accordingly. Secondly, the induction functor for partition algebras is in general only right exact and therefore badly behaved when passing to Grothendieck groups. We will see that we can still define a quantum group action on Fock space, even though it will not be simply the induced action from the action of $i$-induction functors.

Recall that given a module category $C$ the Grothendieck group of $C$, denoted $[C]$, is the abelian group generated by the objects of $C$ modulo a relation $[E]=[M]+[N]$  whenever there is an exact sequence $0 \rightarrow M \rightarrow E \rightarrow N \rightarrow 0$. The complexified Grothendieck group $[C]_{\C}$ is obtained by extending scalars from $\Z$ to $\C$, that is $[C]_{\C}=[C] \otimes_{\Z} \C$. For convenience, we will from now on omit the subscript and denote by $[C]$ the complexified Grothendieck group. Given a functor $G$ on $C$, we define a map $[G]$ on $[C]$ by $[G]([M])=[G(M)]$ for any $M \in C$. We will also recall the definition of the quantum groups $\mathcal{U}_q(\mathfrak{sl_\infty})$ and $\mathcal{U}_q(\widehat{\mathfrak{sl}_p})$ by generators and relations. For $n$ and $k$ with $k \leq n$, we set $[n]_q=\frac{q^n-q^{-n}}{q-q \inv}$, $[n]_q^!=[1]_q [2]_q \cdots [n]_q$ and $\left[\begin{array}{c} n \\ k
\end{array} \right]_q= \frac{[n]_q^!}{[n-k]_q^! [k]_q^!}$.
\begin{defn}
Let  $\C(q)$ be the ring of rational functions in the indeterminate $q$ with complex coefficients. Let $I_p=\Z/p\Z$ and define  matrices $A_0=(a_{ij})_{i,j \in I_0}$ and $A_p=(a_{ij})_{i,j 
\in I_p }$ for $p$ prime  by setting $$ a_{ij}=\begin{cases} 2 & \textrm{if \ }  i=j \\ -1 & \textrm{if \ } |i-j|=1  \\ 0 & \textrm{otherwise}
\end{cases} \textrm{ \ or \ } a_{ij}=\begin{cases} 2 & \textrm{if \ }  i=j \\ -1 & \textrm{if \ } |i-j|=1 \textrm{ \ or \ }  |i-j|=p-1 \\ 0 & \textrm{otherwise}
\end{cases},$$ respectively.  Let $\mathfrak{g}$ be $\mathfrak{sl_\infty}$ (by which we mean the Lie algebra corresponding to the doubly infinite type $A$ case) if $p=0$ and $\widehat{\mathfrak{sl_p}}$ if $p>0$. Thus $A_p$ is the Cartan matrix of $\mathfrak{g}$. We also define $\mathfrak{h}_0$ to be the free $\Z$-module with basis $\{h_i \ | \ i\in \Z \}$ and $\mathfrak{h}_p$ to be the free $\Z$-module with basis $\{h_0,\ldots,h_{p-1},d \}$.  The quantum group
$\mathcal{U}_q(\mathfrak{g})$ is the $\C(q)$ algebra with generators $E_i,F_i,K_h^{\pm 1}$ for $i \in I_p$ and $h \in \mathfrak{h}$ with relations given by:
\begin{align}
\ & K_0=1, \ \   K_hK_g=K_{h+g} \label{eqn qgroup rel cartan} \\  \label{qgroup rel he and hf} \ & K_iE_j=q^{a_{ij}}E_jK_i, \ \  K_iF_j=q^{-a_{ij}}F_jK_i  \\
\ &  E_iF_i-F_iE_i=\delta_{ij} \frac{K_{h_i}-K_{-h_i}}{q-q^{-1}} \label{eqn qgroup rel defn of h}\\
 \ & \sum_{k=0}^{1-a_{ij}} (-1)^k \left[ \begin{array}{c} 1-a_{ij}  \\ k
  \end{array} \right]_q E_i^{1-a_{ij}-k}E_jE_i^k=0 \label{eqn quantum group rel serre for e} \\
 \ &  \sum_{k=0}^{1-a_{ij}} (-1)^k \left[ \begin{array}{c} 1-a_{ij} \\ k
  \end{array} \right]_q F_i^{1-a_{ij}	-k}F_jF_i^k=0 \label{eqn quantum group rel serre f} \\
\ &    K_dE_i=q^{\delta_{i0}}E_0K_d, \ \  K_dF_i=q^{-\delta_{i0}}F_iK_d  \label{eqn quantum group affine Kd}
 \end{align} where $i,j \in I_p$ $g,h \in \mathfrak{h}$, $K_{h_i}=K_i$ and the last relation only holds if $p>0$.
\end{defn}

The Fock space for partition algebras is defined as follows:

\begin{defn}
 Let $F$ be a field and denote by $P_r(\delta)\textrm{-mod}$ the category of finite dimensional left modules of the partition algebra over the ground field $F$. Let $C_{r,\delta}=\bigoplus_{i \in \N} P_i(r-i+\delta)\textrm{-mod}$. The Fock space $\mathcal{F}_{r,\delta}$ for the partition algebra is defined to be $\mathcal{F}_{r,\delta}=[C_{r,\delta}]$.
\end{defn}

Notice that $C_{r,\delta}$ and $\mathcal{F}_{r,\delta}$, respectively, contain the category of representations of $P_r(\delta)$ which justifies the notation. However, we will usually omit the subscripts if they are clear from the context.

We will now define endofunctors on $C_{r,\delta}$ which inspire the action of the quantum group when passing to the Fock space:

\begin{defn}
 We define two functors $e_{i,r,\delta}: P_r(\delta)\textrm{-mod} \to P_{r-1}(\delta-1)\textrm{-mod}$ and $f_{i,r,\delta}: P_r(\delta)\textrm{-mod} \to P_{r+1}(\delta+1)\textrm{-mod}$ by setting $e_{i,r,\delta}=T \circ \ires_r $ and $f_{i,r,\delta}=\iind_r \circ T \inv$ where $T$ is as in Theorem \ref{thm morita equiv of half with full}. We thus define endofunctors $e_i$ and $f_i$ on $C$ by setting $e_i=\displaystyle\bigoplus_{r  \in \N, \delta \in \Z} e_{i,r,\delta}$ and $f_i=\displaystyle\bigoplus_{r  \in \N, \delta \in \Z} f_{i,r,\delta}$.
\end{defn}

The functors $e_i$ and $f_i$ have implicitly already appeared in Proposition \ref{prop common weight over arbitr p means same no of arrows}. Roughly speaking they simply move arrows from column $i-1$ to $i$ and vice-versa and in this way mimic the action of $i$-induction and $i$-restriction functors for Hecke algebras of type $A$ on the  abacus. More precisely, they map cell modules to a cell filtered module and it follows from the proof of Proposition \ref{prop common weight over arbitr p means same no of arrows} that the graphical action on the labels of the cell modules is simply given as follows: $e_i$ and $f_i$, respectively, map a cell module $\Delta(\lambda)$ to a module filtered by cell modules whose labels are the same as $\lambda$ except that one arrow is moved from column $i$ to $i-1$ and $i-1$ to $i$, respectively.

The action of $U_q(\mathfrak{g})$ on $\mathcal{F}$ is defined in a very similar way as in classical case, see Theorem 6.13 in \cite{Mathasbook}:
\begin{defn}
\label{defn action of uqslp on F}
 Fix $r$ and $\delta$ and set $\mathcal{F}=\mathcal{F}_{r,\delta}$.
Let $[\Delta_k(\lambda)] \in \mathcal{F}$ which is a module for $P_k(r-k+\delta)$. Consider the arrow diagram $d(\lambda)$ corresponding to $\lambda$ and parameter $r-k+\delta$. We call an arrow of $d(\lambda)$ $i$-addable if it can be moved up by $1$ from column $i-1$ to column $i$. Similarly, an $i$-removable arrow is an arrow which can be moved from column $i$ to column $i-1$. 

 We denote by $A_i(\lambda)$ and $R_i(\lambda)$ the number of $i$-addable and $i$-removable arrows, respectively. If $\epsilon$ is an $i$-addable arrow, we denote by $A_i^a(\lambda,\epsilon)$/$A_i^b(\lambda,\epsilon)$ the number of $i$-addable arrows above/below $\epsilon$. Similarly, if $\epsilon'$ is an $i$-removable arrow, then we denote by $R_i^a(\lambda,\epsilon')$/$R_i^b(\lambda,\epsilon')$ the number of $i$-removable arrows above/below $\epsilon'$. We also denote by $N_d(\lambda)$ the number of arrows in column $0$ of label larger than $-k$. 

If $\epsilon$ is an $i$-addable/removable arrow, then $\lambda \pm \epsilon$ denotes the partition corresponding to the arrow diagram $d'$ which is obtained from $d(\lambda)$ by adding/removing $\epsilon$. Notice that $d'$ is an arrow diagram for the parameter $r-k+\delta \pm 1$ since we have only moved one arrow (usual addition/removal of boxes means to move a $\bigwedge$ and a $\bigvee$ to take into account the changed size of the partition). This is precisely the graphical realization of the functors $e_i$ an $f_i$. Here, we abuse notation slightly since $\lambda\pm \epsilon$ might be equal to $\lambda$ if $\epsilon$ is an arrow below the line. For $i \in I_p$ define

\begin{align*}
K_i [\Delta_k(\lambda)] & = q^{A_i(\lambda)-R_i(\lambda)} [\Delta_k(\lambda)] \\
E_i [\Delta_k(\lambda)]& = \sum_{\epsilon \textrm{ $i$-removable}} q^{A_i^a(\lambda,\epsilon)-R_i^a(\lambda,\epsilon)} [\Delta_{k-1}(\lambda-\epsilon)] \\
F_i [\Delta_k(\lambda)]& = \sum_{\epsilon \textrm{ $i$-addable}} q^{A_i^b(\lambda,\epsilon)-R_i^b(\lambda,\epsilon)} [\Delta_{k+1}(\lambda+\epsilon)].
\end{align*}
If $p>0$, we also define $$K_d [\Delta_k(\lambda)]  = q^{N_d(\lambda)} [\Delta_k(\lambda)].$$
\end{defn}

\begin{rk}
Notice that the action of $E_i$ is precisely the action of $[e_i]$ if $q=1$. Furthermore, using the exact sequences from Theorem \ref{thm ires of cells} for the definition of $[f_i]$, the action of $F_i$ is the same as $[f_i]$. However, $f_i$ is not exact so that the action of $[f_i]$ might depend on the choice of representative in the Grothendieck group.
\end{rk}
We will now show a theorem which is analogous to the result of Hecke algebras of type $A$ due initially to Hayashi \cite{Hayashi} which was later extended to quantum groups by Misra and Miwa \cite{MisraMiwa}:
\begin{theorem}
\label{thm fock space is uqslp module}
Suppose the characteristic of the ground field is $p$ and let $\mathfrak{g}$ be $\mathfrak{sl_\infty}$ if $p=0$ and $\widehat{\mathfrak{sl_p}}$ if $p>0$.  Then the action defined in Definition \ref{defn action of uqslp on F} equips $\mathcal{F}$ with the structure of an integrable $U_q(\mathfrak{g})$-module.
\end{theorem}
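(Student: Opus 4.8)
The plan is to verify the defining relations \eqref{eqn qgroup rel cartan}--\eqref{eqn quantum group affine Kd} of $U_q(\mathfrak{g})$ directly on the basis $\{ [\Delta_k(\lambda)] \}$ of $\mathcal{F}$ and then to establish integrability. The guiding principle is that, once one forgets the partition-algebraic interpretation, the action of Definition \ref{defn action of uqslp on F} is \emph{formally identical} to the standard action of $U_q(\mathfrak{g})$ on the level-one Fock space realised on an abacus, as in Theorem 6.13 of \cite{Mathasbook}: the arrows $\bigvee$ and $\bigwedge$ play the role of beads, the runners of the abacus are the columns of the arrow diagram grouped by residue modulo $p$ (or the individual columns when $p=0$), and $f_i$ (resp. $e_i$) moves a single arrow from column $i-1$ to column $i$ (resp. back), exactly as $i$-induction and $i$-restriction move a bead across the gap between adjacent runners. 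The exponents $A_i^a(\lambda,\epsilon)-R_i^a(\lambda,\epsilon)$ and $A_i^b(\lambda,\epsilon)-R_i^b(\lambda,\epsilon)$ recorded in the definition are precisely the ones dictated by the comultiplication in the classical computation. Thus the bulk of the proof is the standard combinatorial verification, and the genuinely new content is checking that this dictionary is faithful in the presence of the additional $\bigwedge$-arrow and of the parameter shift $r-k+\delta$ which varies with the summand $k$.

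First I would make the abacus dictionary precise: reading the positions of the arrow diagram $d(\lambda)$ attached to $\Delta_k(\lambda)$ in a fixed total order, each position is either occupied by an arrow or empty, and I would check that the $\bigwedge$ participates in the arrow combinatorics on exactly the same footing as the $\bigvee$'s, as intended by Definition \ref{defn action of uqslp on F}, and that the shifted parameter $r-k+\delta$ is arranged so that $e_i$ and $f_i$ are genuine endofunctors of $C_{r,\delta}$ (this is the role of the Morita functor $T$ of Theorem \ref{thm morita equiv of half with full} and of the shifts in the parameter). The relations \eqref{eqn qgroup rel cartan} and \eqref{qgroup rel he and hf} are then immediate: $K_h$ acts diagonally, the eigenvalue is additive in $h$, and moving one arrow between columns $j-1$ and $j$ changes $A_i-R_i$ by exactly $a_{ij}$, read off directly from the three cases $i=j$, $|i-j|=1$, and $|i-j|\geq 2$ of the Cartan matrix. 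The relation \eqref{eqn quantum group affine Kd} for $K_d$ (needed only when $p>0$) follows from the observation that removing an arrow from column $i$ alters the count $N_d$ of arrows above the cut-off in column $0$ precisely when $i=0$.

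The heart of the argument is the commutator relation \eqref{eqn qgroup rel defn of h}. For $|i-j|\geq 2$ the operators $E_i$ and $F_j$ act on disjoint pairs of columns and therefore commute, giving the required vanishing; the case $|i-j|=1$ is a short direct check that the two single-arrow moves commute on the overlapping column. For $i=j$ I would reduce to rank one by restricting attention to the two columns $i-1$ and $i$: the arrows occupying these columns, read in order, span a tensor product of two-dimensional $U_q(\mathfrak{sl}_2)$-modules under the coproduct, on which $E_i,F_i,K_i$ act as the Chevalley generators, so that \eqref{eqn qgroup rel defn of h} follows from the rank-one identity. The same rank-one, respectively rank-two, reduction handles the Serre relations \eqref{eqn quantum group rel serre for e} and \eqref{eqn quantum group rel serre f}: one restricts to the columns $i-1,i$ and $i,i+1$ (with the affine wrap-around when $p>0$) and invokes the standard quantum Serre computation verbatim. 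Integrability is then clear: each arrow diagram occurring in $\mathcal{F}$ has only finitely many arrows in the bounded active region (as in the remarks following the definition of the $\mu$-projection), so only finitely many $i$-addable and $i$-removable arrows exist and repeated application of $E_i$ or $F_i$ terminates; together with the evident decomposition into finite-dimensional $K_h$-weight spaces this yields an integrable module.

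The main obstacle I anticipate is making the abacus dictionary genuinely precise rather than merely suggestive: one must confirm that the $\bigwedge$-arrow (which has no analogue in the Hecke-algebra Fock space and which encodes the size of the partition, hence moves under the parameter shift) obeys the same moving rules as the $\bigvee$-arrows in \emph{all} of the relation checks, and that the varying parameter $r-k+\delta$ together with the Morita functor $T$ keeps $e_i,f_i$ endofunctorial on $C_{r,\delta}$. Once this bookkeeping is settled the computations are the classical ones. A secondary subtlety is that, as noted in the remark preceding the theorem, $f_i$ is only right exact, so that $[f_i]$ is a priori representative-dependent on the Grothendieck group; I would use the exact sequences of Theorem \ref{thm ires of cells} to pin down the action of $F_i$ on cell classes as in Definition \ref{defn action of uqslp on F}, and since the cell classes form a basis of $\mathcal{F}$ this determines $F_i$ unambiguously, which is exactly what makes the relations hold on the nose.
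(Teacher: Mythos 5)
Your proposal is correct, and it rests on the same foundational dictionary as the paper: arrows of $d(\lambda)$ become beads on an abacus, with the $\bigwedge$ treated as a bead on its own track so that it never collides with the $\bigvee$'s, after which the relations are those of the level-one Fock space for Hecke algebras of type $A$. The difference lies in how the relations involving \emph{nearby} columns are established. The paper makes the dictionary literal by splitting the exceptional runner into two runners (one carrying the $\bigvee$'s, one carrying the $\bigwedge$); whenever the $\bigwedge$-bead does not leave its runner, the action is then \emph{identical} to the classical one, so those relations are inherited by citation from Theorem 10.3 of \cite{Arikibook}. The wrap-around problem for the $\bigwedge$ is eliminated using the freedom to cyclically relabel columns (Remark \ref{rk choice of abacus labels non rigid}), which covers every relation involving at most three neighbouring columns; only the disjoint-column commutations ($|i-j|>1$) are checked by hand, exactly as in your third paragraph. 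You instead re-derive the nearby-column relations from scratch: the rank-one reduction for $E_iF_i-F_iE_i$ and the rank-two reduction for the Serre relations via the $U_q(\mathfrak{sl}_2)$-coproduct on tensor products of two-dimensional modules. This is precisely the proof of the classical theorem, so it is sound, and it buys two things the paper's route does not make explicit: the affine wrap-around needs no relabeling trick (the $\bigwedge$ enters each node's $\mathfrak{sl}_2$-picture as one more slot, uniformly with the $\bigvee$'s), and integrability is addressed directly (each application of $F_i$ strictly decreases the number of $i$-addable arrows, giving local nilpotency, together with the visible weight-space decomposition), whereas the paper inherits integrability implicitly from the citation. The price is that you must actually carry out the standard coproduct computations, including checking that the ordering conventions behind the exponents $A_i^a-R_i^a$ and $A_i^b-R_i^b$ match the coproduct, which the paper's relabeling argument avoids entirely. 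The bookkeeping point you defer -- that the $\bigwedge$ obeys the same moving rules as the $\bigvee$'s in all relation checks -- is genuine but unproblematic: it is resolved exactly by the runner-splitting observation, which your "fixed total order on positions, each occupied or empty" formulation implicitly performs.
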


\begin{proof}
 The theorem will be deduced from the Hecke algebra case which is, for example, proved in Theorem 10.3 of \cite{Arikibook}. In order to do so, notice first that it is enough to check the relations on the basis of $\mathcal{F}$ consisting of the classes of cell modules. Furthermore we may identify a cell module $\Delta_k(\lambda)$ with its arrow diagram $d(\lambda)$. We have to be careful, however, since we can recover $k$ from $d(\lambda)$ only modulo $p$: The arrows above the runners determine a partition in a unique way which also determines the labels above the runners. The position of $\bigwedge$ must be at the label $|\lambda|$ and so determines the labels  below the runners which in turn determines the parameter $\delta'$ up to a multiple of $p$. As claimed, this determines $k$ since the parameter is $\delta'=r-k+\delta$ and $r$ and $\delta$ were fixed.

Now the transition to the Hecke algebra case is done by viewing the arrow diagram as a usual abacus as follows: We split the exceptional runner into two runners, the upper one containing the $\bigvee$s and the lower one containing $\bigwedge$. Furthermore, we relabel the lower of the two newly created runners by letting the labels run from $p$ to $2p-1$ increasing from right to left and the runners below the exceptional runner by adding $p$ to each label. There is one significant difference in that the number of beads on the abacus is usually finite. However, this does not affect the action since the infinitely many boxes of content less than $-k$ will not be movable and can thus be neglected. 

In the abacus case, adding a box of content $i$ corresponds to moving a bead from position $i-1$ to $i$ on a runner and removing a box of content $i$ amounts to moving a bead from position $i$ to $i-1$ on the runner.   Also, the exponents of $q$ in Definition \ref{defn action of uqslp on F}  coincide with the coefficients defined in the Hecke algebra case, except for the definition of $K_d$ which we will consider separately.  Thus, if the bead on the runner corresponding to the bottom of the exceptional runner does not leave this runner after application of $E_i, F_i$ and $K_i$, we can recover a usual arrow diagram from the abacus. In other words, the actions of $E_i, F_i$ and $K_i$ are identical to the classical setting in this case. If $p>0$, then by Remark \ref{rk choice of abacus labels non rigid} the labels of the columns in the arrow diagram is arbitrary. Thus we can shift the labels, if necessary. If $p \neq 2$, then we can assume by this argument that all relations which involve moving beads/arrows in at most three neighbouring columns hold, that is, we can assume $|i-j|>1$. In the case $p=2$, we may assume this for two neighbouring columns which is already sufficient.

Now consider the remaining cases. Relation (\ref{eqn qgroup rel cartan}) is easily seen to hold. In order to show that relations (\ref{qgroup rel he and hf})-(\ref{eqn quantum group rel serre f}) hold, it suffices to show that the action of the following elements commute for $|i-j|>1$: $K_i$ and $E_j$, $K_i$ and $F_j$, $E_i$ and $F_j$, $E_i$ and $E_j$, $F_i$ and $F_j$. Each of these is clear from the graphical description of the action: Since $i \neq j \pm 1$, moving arrows in column $i$ does not affect moving arrows in column $j$. Also the number of $i$-addable/$i$-removable arrows does not change after moving arrows into/out of column $j$ and vice-versa. Finally, if $p>0$, then (\ref{eqn quantum group affine Kd}) requires the weight of $[\Delta_k(\lambda)]$ and $E_0[\Delta_k(\lambda)]$ and $F_0[\Delta_k(\lambda)]$, respectively to differ by $q$ and $q^{-1}$, respectively. However, this is clear since $E_0$ will move an additional arrow into column $0$ and $F_0$ will move an arrow out of column $0$.
\end{proof} 

It turns out that if we compare the weight spaces of this $U_q(\mathfrak{g})$-action with the result on blocks obtained in Theorem \ref{thm determination of blocks}, then the descriptions agree. This is again similar to the case of Hecke algebras of type $A$:

\begin{theorem}
\label{thm decat of block is weight space} Suppose $M$ is a weight space of the $U_q(\mathfrak{g})$-module $\mathcal{F}_{r,\delta}$. Then there is $k \in \N$ and a block $B$ of $P_k(r-k+\delta)\textrm{-mod}$ such that $[B]=M$.
\end{theorem}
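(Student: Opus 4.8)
The plan is to work in the basis of $\mathcal{F}=\mathcal{F}_{r,\delta}$ given by the classes of cell modules $[\Delta_k(\lambda)]$, which is a basis of the complexified Grothendieck group because the decomposition matrix of each $P_k(r-k+\delta)$ is unitriangular by Theorem \ref{thm form of dec matrix}. By Definition \ref{defn action of uqslp on F} each $[\Delta_k(\lambda)]$ is a simultaneous eigenvector for all the $K_i$ (and for $K_d$ when $p>0$), hence a weight vector, so every weight space $M$ is spanned by a set of cell classes and it suffices to decide exactly when $[\Delta_k(\lambda)]$ and $[\Delta_{k'}(\lambda')]$ share a weight. Writing $c_j$ for the total number of arrows ($\bigvee$ and $\bigwedge$ together) in column $j$ of the arrow diagram, the first step is the elementary identity $A_i(\lambda)-R_i(\lambda)=c_{i-1}-c_i$, obtained by the usual signature/telescoping count applied separately to the $\bigvee$-runner and to the single-bead $\bigwedge$-runner; thus $K_i$ acts by $q^{\,c_{i-1}-c_i}$.

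Second, I would translate ``same weight'' into the combinatorics of Theorem \ref{thm determination of blocks}. In characteristic $0$ the index $i$ ranges over all of $\Z$ and the column counts satisfy the boundary conditions $c_j\to 1$ as $j\to-\infty$ and $c_j\to 0$ as $j\to+\infty$, so the family of differences $c_{i-1}-c_i$ determines the whole sequence $(c_j)_{j\in\Z}$, and conversely. Hence two cell classes have equal $K_i$-weight for every $i$ if and only if $d(\lambda)$ and $d(\lambda')$ carry the same number of arrows in every column, which is precisely condition $(c)$ of Theorem \ref{thm determination of blocks}. A short bookkeeping argument then forces $k=k'$: if the two diagrams differ they differ only by relocating the unique $\bigwedge$ from a column $w$ to a column $w'$, with a compensating move of a $\bigvee$ from $w'$ to $w$; moving that $\bigvee$ changes $|\lambda|$ by $w-w'$, while $k=r+\delta-|\lambda|-w$, so the two levels coincide. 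Combined with Theorem \ref{thm determination of blocks}, this identifies the weight space with the span of the cell modules of a single block of a single $P_k(r-k+\delta)$, i.e. $M=[B]$.

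The characteristic $p$ case follows the same outline and is where I expect the main obstacle to be. Now $i$ runs over $\Z/p\Z$, and the cyclic differences $c_{i-1}-c_i$ determine the mod-$p$ column counts only up to a global additive constant; thus the $K_i$-eigenvalues recover the block datum of Theorem \ref{thm determination of blocks} only up to an overall shift of level. The extra operator $K_d$, acting by $q^{\,N_d(\lambda)}$ with $N_d$ the number of arrows in column $0$ of label larger than $-k$, is exactly what removes this ambiguity, and the delicate point is to check, under the identification of $d(\lambda)$ with an honest abacus used in the proof of Theorem \ref{thm fock space is uqslp module}, that $N_d(\lambda)$ pins down $k$ and is unchanged along the block relations of Lemma \ref{lem graphical block relations}. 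Granting this, ``same weight'' is again equivalent to ``same $k$ together with equal arrow counts in every $\Z/p\Z$-column'', which by Theorem \ref{thm determination of blocks} means lying in one block of $P_k(r-k+\delta)$, yielding $M=[B]$. As a consistency check, the whole statement is the exact analogue of the fact that the weight spaces of the level-one Fock space for Hecke algebras of type $A$ are spanned by the Specht modules of a fixed block, transported through the abacus dictionary set up in Theorem \ref{thm fock space is uqslp module}.
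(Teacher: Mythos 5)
Your characteristic-$0$ argument is complete and essentially coincides with the paper's: weight vectors are cell classes, $K_i$ acts by $q^{\,c_{i-1}-c_i}$, the boundary conditions $c_j\to 1$ ($j\to-\infty$), $c_j\to 0$ ($j\to+\infty$) recover the column counts from their differences, and Theorem \ref{thm determination of blocks} converts equality of column counts into the block condition. Your explicit bookkeeping that $k=k'$, via $k=r+\delta-|\lambda|-w$ and the fact that the two diagrams can only differ by exchanging the unique $\bigwedge$ with a $\bigvee$, is a clean way of making precise what the paper phrases as reconstructing the labelling, and hence the parameter and $k$, from the diagram.

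The gap is in characteristic $p$, and you have flagged it yourself: the two statements you ``grant'' --- that $N_d(\lambda)$ is constant along the block relations of Lemma \ref{lem graphical block relations}, and that the weight pins down $k$ exactly --- are precisely the mathematical content of the paper's proof in that case, and they do not follow from anything you have established. The danger is real: $[\Delta_k(\emptyset)]$ and $[\Delta_{k+p}(\emptyset)]$ have literally identical arrow diagrams, hence identical $K_i$-eigenvalues for every $i\in\Z/p\Z$, so without an actual computation of $K_d$ nothing you have written prevents a weight space from mixing the levels $k$ and $k+p$. The paper closes this in two steps. First, if $\lambda$ and $\mu$ are partitions of at most $k$ lying in the same block, then every position of column $0$ with label smaller than $-k$ carries a $\bigvee$ in both diagrams (all parts beyond the $k$th vanish), so equal column counts force $N_d(\lambda)=N_d(\mu)$; this is the block-constancy of the $K_d$-eigenvalue. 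Second, in the converse direction, the $K_d$-eigenvalue supplies the column-$0$ count, which together with the differences $c_{i-1}-c_i$ recovers all column counts; equal column counts force equal labellings of the two diagrams (by the moves of Lemma \ref{lem graphical block relations}, as in the proof of Theorem \ref{thm fock space is uqslp module}), hence determine the parameter and therefore $k$ modulo $p$; finally $K_d$ separates $[\Delta_k(\lambda)]$ from $[\Delta_{k+lp}(\lambda)]$ because it acts by $q^{m}$ on the former and by $q^{m+l}$ on the latter. These verifications are short, but they are exactly what the extra generator $K_d$ exists to provide; your proposal correctly reduces the theorem to them but stops short of proving them.
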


\begin{proof}
By Theorem \ref{thm determination of blocks}, $\Delta_k(\lambda)$ and $\Delta_k(\mu)$ are in the same block of $P_k(\delta')$ if and only if $d(\lambda)$ and $d(\mu)$ contain the same number of arrows in each column (ignoring the infinitely many rows which consist of $\bigvee$ only). We will show that the information contained in a weight of $[\Delta_k(\lambda)]$ is equivalent to knowing the number of arrows in each column of $d(\lambda)$ as well as $k$.

The weight certainly determines the difference between the number of $i$-addable arrows and $i$-removable arrows. We first claim that this number is equal to the number of arrows in column $i$ minus the number of arrows in column $i-1$ of $d(\lambda)$. Notice that even though both numbers are infinite, the difference makes sense since from a high enough runner onwards, there will be a $\bigvee$ in each column and we choose to ignore this part. To show the claim note that if there is an arrow in column $i-1$ which is not $i$-addable then there must be an arrow in the same row but in column $i$ which prevents the arrow being addable. This in turn implies that the arrow in column $i$ is not $i$-removable. Therefore, the number of arrows in column $i$ which are not $i$-removable is equal to the number of arrows in column $i-1$ which are not $i$-addable and since every arrow in column $i$ is either $i$-removable or not $i$-removable (and similarly for column $i-1$), the claim follows.

 Therefore, if $\Delta_k(\lambda)$ and $\Delta_k(\mu)$ are in the same block, then $K_0,\ldots,K_{p-1}$ will act by the same eigenvalue. But because both $\lambda$ and $\mu$ are partitions of at most $k$, all positions with label less than $-k$ in column $0$ will be occupied by a $\bigvee$. Therefore, $K_d$ also acts by the same eigenvalue on both.
 
It remains to show that the weight determines the number of arrows in each column as well as $k$. By the considerations above, the weight encodes the difference of the number of arrows between adjacent columns. Furthermore, if $p=0$, then eventually there will be exactly $1$ arrow in each column so we know the number of arrows in each column. If $p>0$ we know the number of arrows in column $0$ if we ignore infinitely many rows just containing $\bigvee$ by the action of $K_d$ (notice that some rows which just contain $\bigvee$ might be counted as well, but only finitely many). Thus, the information provided by the weight yields how many arrows are contained in each column. 

Now once we are given an arrow diagram (without labels) we can reconstruct the labelling and this labelling is unique by the same arguments as in the proof of Theorem \ref{thm fock space is uqslp module}. Because the labelling also determines the parameter and because of the definition of the Fock space, we can therefore reconstruct $k$ up to a multiple of $p$ from the arrow diagram. Furthermore, if we have two diagrams with the same number of arrows in each column (again ignoring the infinitely many $\bigvee$ above on which they agree), then both arrow diagrams must have the same labelling since we can convert them into another by adding/removing $p$-hooks (that is, move arrows along columns) and swapping a $\bigvee$ and a $\bigwedge$ as in Lemma \ref{lem graphical block relations}. In other words, the weight determines the labelling uniquely. In $p=0$, we are done so assume $p>0$. The action of $K_d$ gives us the number of arrows in column $0$ of content at least $-k$ and we also know that all positions of content less than $-k$ are occupied by a $\bigvee$. Thus, $K_d$ can distinguish $[\Delta_k(\lambda)]$ and $[\Delta_{k+lp}(\lambda)]$: If it acts by some $m \in \N$ on $[\Delta_k(\lambda)]$, then it will act by $m+l$ on $[\Delta_{k+lp}(\lambda)]$.
\end{proof}

The last theorem may be rephrased by saying that the cell module $\Delta_r(\lambda)$ is in the same block of $P_r(\delta)$ as $\Delta_r(\mu)$ if and only if $[\Delta_r(\lambda)]$ and $[\Delta_r(\mu)]$ have the same weight.

\bibliography{referencesDec13}      
\bibliographystyle{amsalpha}

\end{document}